\newtheorem{theorem}{\textbf{Theorem}}[section]
\newtheorem{lemma}{\textbf{Lemma}}[section]
\newtheorem{proposition}{\textbf{Proposition}}[section]
\newtheorem{corollary}{\textbf{Corollary}}[section]
\newtheorem{remark}{\textbf{Remark}}[section]
\newtheorem{definition}{\textbf{Definition}}[section]
\def\be{\begin{equation}}
\def\ee{\end{equation}}
\def\bea{\begin{eqnarray}}
\def\eea{\end{eqnarray}}
\def\bt{\begin{theorem}}
\def\et{\end{theorem}}
\def\bl{\begin{lemma}}
\def\el{\end{lemma}}
\def\br{\begin{remark}}
\def\er{\end{remark}}
\def\bp{\begin{proposition}}
\def\ep{\end{proposition}}
\def\bc{\begin{corollary}}
\def\ec{\end{corollary}}
\def\bd{\begin{definition}}
\def\ed{\end{definition}}
\def\vp{\varphi}
\begin{document}

\title{Separation property and asymptotic behavior for a transmission problem of  the bulk-surface coupled Cahn-Hilliard system with singular potentials and its Robin approximation}
\author{
Maoyin Lv \thanks{%
School of Mathematical Sciences, Fudan University, Shanghai
200433, P.R. China. Email: \texttt{mylv22@m.fudan.edu.cn} }, \ \
Hao Wu \thanks{%
Corresponding author. School of Mathematical Sciences and Shanghai Key Laboratory for Contemporary Applied Mathematics, Fudan University, Shanghai 200433, P.R. China. Email: \texttt{haowufd@fudan.edu.cn} } }
\date{\today }
\maketitle

\begin{abstract}
\noindent
We consider a class of bulk-surface coupled Cahn-Hilliard systems in a smooth, bounded domain $\Omega\subset\mathbb{R}^{d}$ $(d\in\{2,3\})$, where the trace value of the bulk phase variable is connected to the surface phase variable via a Dirichlet boundary condition or its Robin approximation.
For a general class of singular potentials (including the physically relevant logarithmic potential), we establish the regularity propagation of global weak solutions to the initial boundary value problem. In particular, when the spatial dimension is two, we prove the instantaneous strict separation property, which ensures that every global weak solution remains uniformly away from the pure states $\pm 1$ after any given positive time. In the three-dimensional case, we obtain the eventual strict separation property that holds for sufficiently large time. This strict separation property allows us to prove that every global weak solution converges to a single equilibrium as time goes to infinity using the {\L}ojasiewicz-Simon approach. Finally, we study the double obstacle limit for the problem with logarithmic potentials in the bulk and on the boundary, showing that as the absolute temperature $\Theta$ tends to zero, the corresponding weak solutions converge (for a suitable subsequence) to a weak solution of the problem with a double obstacle potential. \medskip

\noindent \textit{Keywords}:
Cahn-Hilliard equation, bulk-surface interaction, dynamic boundary condition, singular potential, separation property, convergence to equilibrium, double obstacle limit. \smallskip

\noindent \textit{MSC 2020}: 35B40, 35B65, 35K35, 35K61, 35Q92.
\end{abstract}


\section{Introduction}

The Cahn-Hilliard equation is a fundamental diffuse-interface model proposed in \cite{CH} to describe spinodal decomposition of binary alloys.
In the diffuse-interface framework, an order parameter $\varphi$, known as the phase-field, is introduced to characterize the difference between local concentrations (e.g., volume fractions) of two components in a binary mixture. Regions of pure phases correspond to areas with $\varphi=\pm 1$. They are separated by a thin interfacial layer (diffuse interface) whose thickness is proportional to a small parameter $\varepsilon>0$. In intermediate regions, the phase function $\varphi$ exhibits a continuous transition between $-1$ and $1$. Diffuse-interface models, including the Cahn-Hilliard equation and its variants, provide an efficient tool for studying morphological changes of free interfaces in fluid or solid mixtures from both theoretical and numerical perspectives \cite{Mi}. In recent years, they have been successfully applied to describe phase separation phenomena arising in various areas of scientific research, such as diblock copolymers, image painting, tumor growth and two-phase flows.

Let us consider the following Cahn-Hilliard equation
\begin{align}
	\left\{
	\begin{array}{ll}
		\partial_{t}\varphi=\Delta\mu,&\quad \text{in }\Omega\times (0,+\infty),\\
		\mu=-\Delta\varphi+F'(\varphi),&\quad \text{in }\Omega\times (0,+\infty),
	\end{array}\right.
\label{CH}
\end{align}
where $\Omega\subset\mathbb{R}^{d}$ $(d\in\{2,3\})$ is a bounded domain with smooth boundary $\Gamma:=\partial\Omega$. For simplicity, several physical parameters such as the interfacial thickness, interfacial tension and mobility etc have been set to unity since their values do not have any influence on the subsequent analysis. The functions $\varphi:\Omega\times (0,+\infty)\rightarrow[-1,1]$ and $\mu:\Omega\times (0,+\infty)\to\mathbb{R}$ denote the phase-field and chemical potential in the bulk, respectively.
To ensure well-posedness of the evolution equation \eqref{CH}, appropriate boundary conditions (accompanied by an initial condition for $\varphi$) must be taken into account. Classical choices include the homogeneous Neumann boundary conditions for the phase-field variable and the chemical potential, specifically,  	$\partial_{\mathbf{n}}\varphi=\partial_{\mathbf{n}}\mu=0$ on $\Gamma\times (0,+\infty)$. The resulting initial-boundary value problem has been extensively studied in the literature (see, e.g., \cite{AW,EZ,GGM,KNP,Mi,RH,MZ04} and the references therein). Apart from their crucial role in the mathematical analysis, the physics associated with these boundary conditions is also significant for real-world applications. For instance, $\partial_{\mathbf{n}}\mu=0$ represents a no-flux boundary condition, indicating that there is no exchange of mass between the inside and outside of the domain $\Omega$. However, the boundary condition $\partial_{\mathbf{n}}\varphi=0$ may have limitations from a physical perspective, as it implies that the diffuse interface between two components of the mixture intersects
the boundary $\Gamma$ with a fixed contact angle of ninety degrees at all times, which may be unrealistic (cf.\cite{QWS}). Generally speaking, the physics associated with boundary conditions cannot be simply deduced from that associated with evolution equations in the bulk. The coexistence of different dissipative processes in the bulk and on the boundary is a common phenomenon in mixtures of materials (see \cite{FMD,KEMRSBD,QWS}).

In recent years, the study of boundary effects in phase separation processes of binary mixtures has garnered significant attention. To describe short-range interactions between the solid boundary and the mixture contained within, several types of dynamic boundary conditions for the Cahn-Hilliard equation have been introduced and analyzed in the literature, as seen in the recent review paper \cite{W} and the references therein. In this work, we specifically focus on the following boundary conditions:
\begin{align}
	\left\{
	\begin{array}{ll}
        \partial_{\mathbf{n}}\mu=0,&\quad \text{on }\Gamma\times (0,+\infty),\\
		K\partial_{\mathbf{n}}\varphi=\psi-\varphi,&\quad \text{on }\Gamma\times (0,+\infty),\\
		\partial_{t}\psi=\Delta_{\Gamma}\theta,&\quad \text{on }\Gamma\times (0,+\infty),\\
		\theta=\partial_{\mathbf{n}}\varphi-\Delta_{\Gamma}\psi+G'(\psi),&\quad \text{on }\Gamma\times (0,+\infty),
	\end{array}\right.
\label{dynamic}
\end{align}
where $\mathbf{n}:=\mathbf{n}(x)$ stands for the unit outer normal vector on $\Gamma$.
The symbols $\partial_{\mathbf{n}}$ and $\Delta_{\Gamma}$  denote the outward normal derivative and the Laplace-Beltrami operator on the boundary, respectively.
In \eqref{dynamic}, the surface phase-field $\psi:\Gamma\times (0,+\infty)\to[-1,1]$ represents distribution of the binary mixture on the boundary and $\theta: \Gamma\times (0,+\infty)\to\mathbb{R}$ denotes the surface chemical potential. To solve the evolution problem \eqref{CH}--\eqref{dynamic}, we also impose the initial conditions:
\begin{align}
	\varphi|_{t=0}=\varphi_{0}\quad \text{in }\Omega,\qquad\psi|_{t=0}=\psi_{0}\quad \text{on }\Gamma.
\label{initial}
\end{align}
In \eqref{dynamic}, we neglect mass transfer between the bulk and boundary, maintaining the no-flux boundary condition $\partial_{\mathbf{n}}\mu=0$ (cf. \cite{GMS,KLLM}, where possible adsorption or desorption processes between the materials in the bulk and on the boundary were considered, see \eqref{kllm} below). This condition also implies that the chemical potentials $\mu$ and $\theta$ are not directly coupled, and interactions between the bulk and surface materials occur through the phase-fields $\varphi$ and $\psi$. The third and fourth conditions in \eqref{dynamic} yield a surface Cahn-Hilliard equation for $\psi$ on $\Gamma$, which is coupled to the bulk through the normal derivative $\partial_\mathbf{n} \varphi$. Finally, we observe that the bulk and surface phase-field functions are coupled through the second condition in \eqref{dynamic} with a parameter $K\in[0,+\infty)$. When $K=0$, it simplifies to a transmission condition $\varphi|_{\Gamma}=\psi$ on $\Gamma\times(0,+\infty)$, i.e., a (non-homogeneous) Dirichlet boundary condition for the bulk phase-field, where $\varphi|_{\Gamma}$ represents the trace of $\varphi$ on the boundary. When $K\in (0,+\infty)$, the corresponding condition provides a Robin type approximation (also known as the boundary penalty method), see \cite{CFL,KL20} for further discussions. Formally, in the limit $K\to +\infty$, we find $\partial_{\mathbf{n}}\varphi=0$, which, together with $\partial_{\mathbf{n}}\mu=0$, indicates that the dynamics of $\varphi$ in the bulk and the dynamics of $\psi$ on the boundary become independent. This situation is less interesting and will not be considered  here.

The set of boundary conditions \eqref{dynamic} with $K=0$ was initially derived in \cite{LW}, employing an energetic variational approach that integrates the least action principle and Onsager's principle of maximum energy dissipation. It characterizes a specific phase separation process involving transmission dynamics between the bulk and the boundary,  inherently satisfying three physical properties: mass conservation and force balance both in the bulk and on the boundary, as well as dissipation of the total free energy (see \eqref{massconservation}, \eqref{BEL-1} below). Extensions have been made in \cite{KL20} to the case of an affine linear transmission condition, $\varphi|_\Gamma=\alpha \psi+\beta$, further to a more general scenario,  $\varphi|_\Gamma=H(\psi)$, for some continuous function $H:\mathbb{R}\to \mathbb{R}$. Additionally, the case with $K\in (0,+\infty)$ was proposed as a Robin approximation of the Dirichlet-type transmission condition, and the convergence as $K\to 0$ for the affine linear case was rigorously justified with an error estimate.  It is noteworthy that the general transmission condition, such as like $\varphi|_\Gamma=H(\psi)$, was first considered in \cite{CFL} for the Allen-Cahn system to account for some intriguing and non-trivial couplings between bulk and surface dynamics. In this study, we confine ourselves to the linear case, $\varphi|_\Gamma=\psi$, for simplicity.

Let us now present some important properties of the bulk-surface coupled system \eqref{CH}--\eqref{initial}. For sufficiently regular solutions, we find the conservation of mass both in the bulk and on the boundary:
\begin{align}
	\int_{\Omega}\varphi(t)\,\mathrm{d}x=\int_{\Omega}\varphi_{0}\,\mathrm{d}x,\quad
	\int_{\Gamma}\psi(t)\,\mathrm{d}S=\int_{\Gamma}\psi_{0}\,\mathrm{d}S, \qquad\forall\,t\in[0,+\infty). \label{massconservation}
\end{align}
Next, the total free energy associated to the system \eqref{CH}--\eqref{dynamic} is given by
\begin{align}
	E\big(\varphi,\psi\big):=\underbrace{\int_{\Omega}\Big(\frac{1}{2}|\nabla\varphi|^{2}
	+F(\varphi)\Big)\,\mathrm{d}x}_{\text{bulk free energy}}+\underbrace{\int_{\Gamma}\Big(\frac{1}{2}|\nabla_{\Gamma}\psi|^{2}
	+G(\psi)\Big)\,\mathrm{d}S}_{\text{surface free energy}}+\frac{\chi(K)}{2}\int_{\Gamma}|\psi-\varphi|^{2}\,\mathrm{d}S,\notag
\end{align}
where $\nabla$ and $\nabla_{\Gamma}$ denote the gradient operator in $\Omega$ and the tangential (surface) gradient operator on $\Gamma$, respectively. Here we set
\begin{align*}
	\chi(K):=
	\begin{cases}
	0,&\quad \text{if }K=0,\smallskip \\
	\dfrac{1}{K},&\quad \text{if }K\in(0,+\infty),
	\end{cases}
\end{align*}
to distinguish the case of Dirichlet transmission condition and its Robin approximation. The first and second terms in $E$ correspond to the bulk and surface free energies of Ginzburg-Landau type, while the third term measures the deviation of the trace $\varphi|_{\Gamma}$ from $\psi$ (cf. \cite{CFL,KL20}). The surface Dirichlet energy $(1/2)\int_{\Gamma} |\nabla_{\Gamma}\psi|^{2}\mathrm{d}S$ corresponds to possible surface diffusion and yields a regularizing effect on the boundary (cf. \cite{CFSJEE,LW} for the case $K=0$). A direct calculation yields that, for sufficiently regular solutions to problem \eqref{CH}--\eqref{initial}, the following energy identity is fulfilled:
\begin{align}
	\frac{\mathrm{d}}{\mathrm{d}t}E\big(\varphi(t),\psi(t)\big) +\int_{\Omega}|\nabla\mu(t)|^{2}\,\mathrm{d}x	+\int_{\Gamma}|\nabla_{\Gamma}\theta(t)|^{2}\,\mathrm{d}S=0, \qquad\forall\,t\in(0,+\infty).
\label{BEL-1}
\end{align}
We note that the bulk and boundary chemical potentials $\mu$, $\theta$ can be obtained from the variation of the total free energy. Moreover, the Cahn-Hilliard system \eqref{CH}--\eqref{dynamic} can be regarded as a gradient flow of  $E$ with respect to a suitable inner product (see \cite{GK,KL20}).

The nonlinear functions $F$ and $G$ represent the homogeneous free energy densities in the bulk and on the boundary, respectively. We shall treat general singular potentials in a setting similar to that in \cite{CFW}, namely, with the decomposition
\begin{equation}
F=\widehat{\beta}+\widehat{\pi},\qquad G=\widehat{\beta}_{\Gamma}+\widehat{\pi}_{\Gamma},
\label{decom}
\end{equation}
where $\widehat{\beta}$, $\widehat{\beta}_{\Gamma}$ are proper convex lower semicontinuous functions and $\widehat{\pi}$, $\widehat{\pi}_{\Gamma}$ are smooth concave perturbations.
Then we denote $F'=\beta+\pi$ and $G'=\beta_{\Gamma}+\pi_{\Gamma}$, where $\beta=\partial\widehat{\beta}$, $\beta_{\Gamma}=\partial\widehat{\beta}_{\Gamma}$ are the  subdifferentials of $\widehat{\beta}$, $\widehat{\beta}_{\Gamma}$
and $\pi=\widehat{\pi}'$, $\pi_{\Gamma}=\widehat{\pi}_{\Gamma}'$ are usual derivatives.
In applications related to materials science, a physically relevant choice for $F$ (and $G$) is the logarithmic potential \cite{CH} (also referred to as the Flory-Huggins potential):
\begin{align}	W_{\text{log}}(r):=\frac{\Theta}{2}\underbrace{\big[(1+r)\text{ln}(1+r)+(1-r)\text{ln}(1-r)\big]}_{=:F_0(r)} -\frac{\Theta_{c}}{2}r^{2},\qquad r\in(-1,1),
\label{logarithmic}
\end{align}
where $\Theta>0$ is the absolute temperature of the mixture and $\Theta_{c}$ is the critical temperature for phase separation. We find that the logarithmic part $F_{0}\in C([-1,1])\cap C^{\infty}(-1,1)$ is convex, while $W_{\text{log}}$ is non-convex with a double-well structure if $\Theta_{c}>\Theta$. $W_{\text{log}}$ is referred to as a singular potential since the derivative $f_0(r)=F_0'(r)$  diverges to $\pm \infty$ as $r\to\pm 1$. In practice, the logarithmic potential is often approximated by a regular potential of polynomial type like
$$
W_{\text{reg}}(r)=\frac{1}{4}(r^2-1)^2,\qquad r\in \mathbb{R}.
$$
We also mention another commonly used singular potential, that is,  the so-called double-obstacle potential  (see \cite{BE}):
\begin{align}
	W_{\text{2obs}}(r)=I_{[-1,1]}(r)-\dfrac{\Theta_{c}}{2}r^{2}=
	\left\{
	\begin{array}{ll}
		-\dfrac{\Theta_{c}}{2}r^{2},&\text{if }r\in[-1,1],\smallskip \\
		+\infty,&\text{else},
	\end{array}\right.
\label{2obs}
\end{align}
where $I_{[-1,1]}(r)$ is the indicator function of $[-1,1]$. Then it holds
 $W'_{\text{2obs}}(r)=\partial I_{[-1,1]}(r)-\Theta_c r$.

We first recall some related results in the case where $K=0$. The problem \eqref{CH}--\eqref{initial} with regular potentials $F$, $G$ was initially analyzed in \cite{LW}, where well-posedness and long-time behavior (i.e., convergence to a single equilibrium) of global weak/strong solutions were established. Subsequently, by introducing a slightly weaker notation of the solution, the authors of \cite{GK} proved existence and uniqueness of weak solutions via a gradient flow approach, removing the additional geometric assumption imposed in \cite{LW} when the surface diffusion is absent. Besides, the existence of a global attractor and exponential attractors were obtained in \cite{MW}. The case with singular potentials (including \eqref{logarithmic}, \eqref{2obs}) is more intricate. Existence and uniqueness of weak/strong solutions were proved in \cite{CFW} based on a novel time-discretization scheme for a regularized problem with viscous terms (in the chemical potentials) and Yosida's approximation (for the singular nonlinearities $F$, $G$). Regularity propagation of weak solutions and the existence of a global attractor were proved in \cite{MW}. However, less is known about the long-time behavior in this case (for instance, convergence to a single equilibrium and the existence of exponential attractors), since further regularity properties of weak solutions, particularly whether the solution will stay uniformly away from the pure states, remain unclear. We also mention \cite{CFSJEE}, in which the authors investigated the asymptotic limit as the surface diffusion acting on the boundary phase-field variable vanishes. They obtained a forward-backward dynamic boundary condition at the limit, and, thanks to the Dirichlet type transmission condition (i.e., $K=0$), they were able to prove well-posedness of the limit problem with a general class of singular potentials.

Next, when $K\in(0,+\infty)$, problem \eqref{CH}--\eqref{initial} with regular potentials $F$, $G$ was investigated in \cite{KL20}. Well-posedness and the asymptotic limit as $K\to 0$, both with or without surface diffusion on the boundary, were established. Recently,  in \cite{KS24}, a general class of bulk-surface Cahn-Hilliard systems with convection, dynamic boundary conditions, and regular potentials was analyzed. The authors considered the following boundary condition that extends the second one in \eqref{dynamic}:
\begin{align}
	L\partial_{\mathbf{n}}\mu=\theta-\mu,\quad\text{on }\Gamma\times(0,+\infty), \quad \text{with}\ L\in [0,+\infty].
\label{kllm}
\end{align}
 They first proved existence of global weak solutions in the case $K, L \in (0,+\infty)$ using a suitable Faedo-Galerkin approximation, and then obtained the existence of weak solutions for all other cases through the asymptotic limits (i.e., letting $K$ and $L$ tend to $0$ or to $+\infty$). Unfortunately, singular potentials like the logarithmic potential \eqref{logarithmic} or the double-obstacle potential \eqref{2obs} are not admissible in this context (see \cite[Remark 2.1]{KS24}).
 For problem \eqref{CH}--\eqref{initial} with $K\in(0,+\infty)$ and singular potentials, the only available analytic result was given in \cite{CKSS}, where the authors studied the Cahn-Hilliard system \eqref{CH}--\eqref{dynamic} coupled to a Brinkman equation that describes the motion of creeping two-phase flows in a porous medium. They established the existence of global weak solutions for singular potentials, including \eqref{logarithmic} and \eqref{2obs}. However, due to the coupling with a velocity equation, the uniqueness and regularity of weak solutions remain open. Furthermore, in contrast to \cite{CFSJEE}, the dynamics in the bulk do not seem sufficiently strong to compensate the backward dynamics in the limit of vanishing surface diffusion on the boundary, since $K\in (0,+\infty)$ implies a weaker relationship between $\varphi|_\Gamma$ and $\psi$.

Our aim in this study is to explore the strict separation property and asymptotic behavior of global weak solutions to problem \eqref{CH}--\eqref{initial} with singular potentials for all $K\in[0,+\infty)$. The findings are summarized as follows:
\begin{itemize}
	\item[(1)] \emph{Strict separation from pure states}. In three dimensions, we establish the eventual strict separation property for a general class of singular potentials, ensuring that every global weak solution stays uniformly distant from the pure states $\pm 1$ after a sufficiently large time. Our proof relies on the gradient flow structure of problem \eqref{CH}--\eqref{initial} and the strict separation property of the $\omega$-limit set (see Theorem \ref{eventual}). When the spatial dimension is two, under certain additional assumptions (see $(\mathbf{A5a})$, $(\mathbf{A5b})$ in Section 2), we prove the instantaneous strict separation property, meaning that every global weak solution stays uniformly away from $\pm 1$ after an arbitrary given positive time (see Theorem \ref{intantaneous}). We successfully extend the direct method from \cite{GGG} and the De Giorgi's iteration scheme from \cite{GP}, originally applied to the Cahn-Hilliard equation with  homogeneous Neumann boundary conditions, to the present case involving non-trivial bulk-surface interactions.
\item[(2)] \emph{Long-time behavior}. Thanks to the strict separation property, we can regard the singular potentials as globally Lipschitz functions on a compact subset of $(-1,1)$. Consequently, problem \eqref{CH}--\eqref{initial} can be approached as the case with regular potentials (cf. \cite{AW,GW,RH}). More precisely, assuming that $F$ and $G$ are real analytic on $(-1,1)$, we can prove that every global weak solution converges to a single equilibrium as $t\rightarrow +\infty$ (see Theorem \ref{equilibrium}).  The proof is aided by an extended {\L}ojasiewicz-Simon type gradient inequality that incorporates bulk-surface interactions (see Lemma \ref{LS}).
\item[(3)] \emph{Double obstacle limit}. Consider problem \eqref{CH}--\eqref{initial} with $F=G=W_{\text{log}}$ in a finite time interval $[0,T]$ for any given final time $T\in(0,+\infty)$.
Despite the bulk-surface coupling structure, we prove that weak solutions $(\varphi_{\Theta}, \psi_{\Theta}, \mu_{\Theta},\theta_{\Theta})$ corresponding to the parameter $\Theta\in(0,1]$ converge (for a suitable subsequence) to the weak solution of the limit system as $\Theta\rightarrow 0$. In this limit system, equations \eqref{CH}$_{2}$, \eqref{dynamic}$_{4}$ for the bulk and surface chemical potentials are replaced by differential inclusions related to the subgradient of the double obstacle potential $W_{\text{2obs}}$ (see Theorem \ref{doubleobstacle}). The proof relies on uniform estimates with respect to the parameter $\Theta\in(0,1]$ that can be derived from the energy equality \eqref{BEL-1} and a compactness argument similar to that in \cite{Abels11}.
\end{itemize}

The strict separation property plays a pivotal role in the study of phase-field models with singular potentials. It simplifies the handling of singular potentials and enables us to gain further insights into the regularity and long-time behavior of global solutions.
For the Cahn-Hilliard equation \eqref{CH} subject to homogeneous Neumann boundary conditions, the eventual separation property was proven in \cite{AW} using a dynamic approach (see \cite{GP} for an alternative proof based on De Giorgi's iteration scheme). Regarding the eventual separation property for the Cahn-Hilliard equation with Cahn-Hilliard type dynamic boundary conditions, we refer to the recent works \cite{FW} (for the case $K=L=0$) and \cite{LvWu-2} (for the case $K=0$, $L\in[0,+\infty)$).
On the other hand, the instantaneous strict separation property is more intricate since it depends on the spatial dimension and necessitates additional assumptions on the singular potential. Taking a singular potential $F=\widehat{\beta}+\widehat{\pi}$ as an example, a commonly used assumption is the following pointwise relation between the first and second order derivatives $\widehat{\beta}'$, $\widehat{\beta}''$:
\begin{align}
	\widehat{\beta}''(r)\leq C_{\sharp}e^{C_{\sharp}|\widehat{\beta}'(r)|^{\kappa_{\sharp}}}, \qquad\forall\,r\in(-1,1),\label{pointwise}
\end{align}
for some constants $C_{\sharp}>0$ and $\kappa_{\sharp}\in[1,2)$. See  \cite{GGG,GGM,HW21,MZ04,W} and the references therein for detailed discussions in the case of homogeneous Neumann boundary conditions.
Recently, with the aid of a suitable De Giorgi's iteration scheme, the authors of \cite{GP} were able to treat a wilder set of singular potentials based on some milder growth condition just for the first-order derivative $\widehat{\beta}'$ near the pure states $\pm1$, that is , as $\delta\to0$, there exists $\kappa>1/2$, such that
	\begin{align}
	\frac{1}{\widehat{\beta}'(1-2\delta)}
=O\Big(\frac{1}{|\ln \delta|^{\kappa}}\Big),\qquad	\frac{1}{|\widehat{\beta}'(1-2\delta)|} =O\Big(\frac{1}{|\ln \delta|^{\kappa}}\Big).
\label{milder}
\end{align}
Regarding the Cahn-Hilliard equation coupled with Cahn-Hilliard type dynamic boundary conditions, the authors of \cite{FW} achieved instantaneous separation in two dimensions under the assumption \eqref{pointwise} with $\kappa_{\sharp}=1$ for the case $K=L=0$. Recently, in \cite{LvWu-2}, this result was extended to the case $K=0$, $L\in[0,+\infty)$ using a suitable De Giorgi's iteration scheme under the assumption \eqref{milder}. To the best of our knowledge, for the bulk-surface coupled Cahn-Hilliard system in the case $K\in(0,+\infty)$, whether the strict separation property holds or not remains an open question. Lastly, it is worth mentioning recent progresses on the strict separation property of nonlocal and fractional Cahn-Hilliard equations, as seen in \cite{GGG,GP,Gi,Po} and the references therein.

Concerning the double obstacle limit, the Cahn-Hilliard equation \eqref{CH} with the double obstacle potential \eqref{2obs} and homogenous Neumann boundary conditions was first studied in \cite{BE}, and later it was proven in \cite{EL} that as $\Theta\rightarrow0$, solutions of the Cahn-Hilliard equation with $W_{\text{log}}$ converge to solutions corresponding to $W_{\text{2obs}}$.
This type of result is also known as the deep quench limit in the literature. For extensions to the Navier-Stokes-Cahn-Hilliard system for incompressible two-phase flows, see \cite{Abels11} for the case with matched densities and \cite{AGG23} for the case with unmatched densities. Returning to our problem \eqref{CH}--\eqref{initial}, well-posedness in the case with $F=G=W_{\text{2obs}}$ and $K=0$ was established in \cite{CFW}, while for the case $K\in(0,+\infty)$, existence of weak solutions was obtained in \cite{CKSS}. In this study, we rigorously justify the limiting procedure as $\Theta\to 0$, with the novelty being the treatment of the bulk-surface coupling.

In recent years, the Cahn-Hilliard equation subject to Cahn-Hilliard type dynamic boundary conditions for the phase-field variable and the general boundary condition \eqref{kllm} for the chemical potential has also garnered significant attention. The parameter $L$ distinguishes different types of adsorption or desorption processes between the materials in the bulk and on the boundary, and the value of $1/L$ can be interpreted as a kinetic rate \cite{KLLM}. Related models with $L\in [0,+\infty]$ have been extensively studied in the literature from various viewpoints, as seen in \cite{CF15,CFS,CFSJEE,FW,GK,GKY,GMS,KLLM,KS24,LW,LvWu,LvWu-2,MW}. For further discussions on this topic, we refer to the review paper \cite{W}.

The remainder of this paper is structured as follows: In Section 2, we introduce notations, assumptions and preliminaries, followed by the presentation of our main results. In Section 3, we present the well-posedness of problem \eqref{model} and establish some basic properties for global weak solutions, including the mass conservation law, energy dissipation and global regularity. In Section 4, we first demonstrate the instantaneous strict separation property in two dimensions and then establish the eventual strict separation property that applies in both two and three dimensions. In Section 5, we show that every global weak solution converges to a single equilibrium as time goes to infinity, using the {\L}ojasiewicz-Simon approach. Section 6 is dedicated  to proving the double obstacle limit as $\Theta\to 0$. In the Appendix, we provide useful tools and outline the proof of a generalized {\L}ojasiewicz-Simon inequality.

\section{Main Results}
\setcounter{equation}{0}
\subsection{Preliminaries}
For any real Banach space $X$, we denote its norm by $\|\cdot\|_X$, its dual space by $X'$
and the duality pairing between $X'$ and $X$ by
$\langle\cdot,\cdot\rangle_{X',X}$. If $X$ is a Hilbert space,
we denote the associated inner product by $(\cdot,\cdot)_X$.
The space $L^q(0,T;X)$ ($1\leq q\leq +\infty$)
stands for the set of all strongly measurable $q$-integrable functions with
values in $X$, or, if $q=+\infty$, essentially bounded functions. The space $L^q_{\mathrm{uloc}}(0,+\infty;X)$ denotes the uniformly local variant of $L^q(0,+\infty;X)$ consisting of all strongly measurable $f:[0,+\infty)\to X$ such that
\begin{equation*}
	\|f\|_{L^q_{\mathrm{uloc}}(0,+\infty;X)}:= \mathop\mathrm{sup}\limits_{t\ge0} \|f\|_{L^q(t,t+1;X)} <+ \infty.
\end{equation*}
If $T\in(0,+\infty)$, we simply have $L^q_{\mathrm{uloc}}(0,T;X)=L^q(0,T;X)$.
The space $C([0,T];X)$ denotes the Banach space of all bounded and
continuous functions $u:[ 0,T] \rightarrow X$ equipped with the supremum
norm, while $C_{w}([0,T];X)$ denotes the topological vector space of all
bounded and weakly continuous functions.

Let $\Omega$ be a bounded domain in $\mathbb{R}^d$ ($d\in \{2,3\}$) with sufficiently smooth boundary $\Gamma:=\partial \Omega$.
We denote by $|\Omega|$ and $|\Gamma|$
the Lebesgue measure of $\Omega$ and the Hausdorff measure of $\Gamma$, respectively.
For any $1\leq q\leq +\infty$, $k\in \mathbb{N}$, the standard Lebesgue and Sobolev spaces on $\Omega$ are denoted by $L^{q}(\Omega )$
and $W^{k,q}(\Omega)$. Here, we use $\mathbb{N}$ for the set of natural numbers including zero.
For $s\geq 0$ and $q\in [1,+\infty )$, we denote by $H^{s,q}(\Omega )$ the Bessel-potential spaces and by $W^{s,q}(\Omega )$ the Slobodeckij spaces.
If $q=2$, it holds $H^{s,2}(\Omega)=W^{s,2}(\Omega )$ for all $s$ and these spaces are Hilbert spaces.
We use the notations $H^s(\Omega)=H^{s,2}(\Omega)=W^{s,2}(\Omega )$ and $H^0(\Omega)$ can be identified with $L^2(\Omega)$.
The Lebesgue spaces, Sobolev spaces and Slobodeckij spaces on the boundary $\Gamma$ can be defined analogously,
provided that $\Gamma$ is sufficiently regular.
Again, we write $H^s(\Gamma)=H^{s,2}(\Gamma)=W^{s,2}(\Gamma)$ and identify $H^0(\Gamma)$ with $L^2(\Gamma)$.
For convenience, we shall use the following shortcuts:
\begin{align*}
	&H:=L^{2}(\Omega),\quad H_{\Gamma}:=L^{2}(\Gamma),\quad V:=H^{1}(\Omega),\quad V_{\Gamma}:=H^{1}(\Gamma).
\end{align*}
Next, we introduce the product spaces
$$\mathcal{L}^{q}:=L^{q}(\Omega)\times L^{q}(\Gamma)\quad\mathrm{and}\quad\mathcal{H}^{k}:=H^{k}(\Omega)\times H^{k}(\Gamma),$$
for $q\in [1,+\infty]$ and $k\in \mathbb{R}$, $k\geq 0$.
Like before, we can identify  $\mathcal{H}^{0}$ with $\mathcal{L}^{2}$.
For any $k\in \mathbb{N}$, $\mathcal{H}^{k}$ is a Hilbert space endowed with the standard inner product
$$
\big((y,y_{\Gamma}),(z,z_{\Gamma})\big)_{\mathcal{H}^{k}}:=(y,z)_{H^{k}(\Omega)}+(y_{\Gamma},z_{\Gamma})_{H^{k}(\Gamma)},\quad\forall\, (y,y_{\Gamma}), (z,z_{\Gamma})\in\mathcal{H}^{k}
$$
and the induced norm $\Vert\cdot\Vert_{\mathcal{H}^{k}}:=(\cdot,\cdot)_{\mathcal{H}^{k}}^{1/2}$.
We introduce the duality pairing
\begin{align*}
	\big\langle (y,y_\Gamma),(\zeta, \zeta_\Gamma)\big\rangle_{(\mathcal{H}^1)',\mathcal{H}^1}
	= (y,\zeta)_{L^2(\Omega)}+ (y_\Gamma, \zeta_\Gamma)_{L^2(\Gamma)},
	\quad \forall\, (y,y_\Gamma)\in \mathcal{L}^2,\ (\zeta, \zeta_\Gamma)\in \mathcal{H}^1.
\end{align*}
By the Riesz representation theorem, this product
can be extended to a duality pairing on $(\mathcal{H}^1)'\times \mathcal{H}^1$.
For any positive integer $k$, we introduce the Hilbert space
$$
\mathcal{V}^{k}:=\big\{(y,y_{\Gamma})\in\mathcal{H}^{k}\;:\;y|_{\Gamma}=y_{\Gamma}\ \ \text{a.e. on }\Gamma\big\},
$$
endowed with the inner product $(\cdot,\cdot)_{\mathcal{V}^{k}}:=(\cdot,\cdot)_{\mathcal{H}^{k}}$ and the associated norm $\Vert\cdot\Vert_{\mathcal{V}^{k}}:=\Vert\cdot\Vert_{\mathcal{H}^{k}}$.
Here, $y|_{\Gamma}$ stands for the trace of $y\in H^k(\Omega)$ on the boundary $\Gamma$, which is meaningful for $k\in \mathbb{Z}^+$.
The duality pairing on $(\mathcal{V}^1)'\times \mathcal{V}^1$ can be defined in a similar manner.

For every $y\in V'$, we denote by $%
\langle y\rangle_\Omega=|\Omega|^{-1}\langle
y,1\rangle_{V',\,V}$ its generalized mean
value over $\Omega$. If $y\in L^1(\Omega)$, then its spatial mean is simply
given by $\langle y\rangle_\Omega=|\Omega|^{-1}\int_\Omega y \,\mathrm{d}x$.
The spatial mean for a function $y_\Gamma$ on $\Gamma$, denoted by $\langle
y_\Gamma\rangle_\Gamma$, can be defined in a similar manner. With these notations, we define the following subspaces for functions with zero mean:
\begin{align}
\begin{array}{ll}
H_{0}:=\big\{y\in H:\,\langle y\rangle_{\Omega}=0\big\},
&\qquad H_{\Gamma,0}:=\big\{y_{\Gamma}\in H_{\Gamma}:\,\langle y_{\Gamma}\rangle_{\Gamma}=0\big\},\notag\\
V_{0}:=V\cap H_{0},
&\qquad  V_{\Gamma,0}:=V_{\Gamma}\cap H_{\Gamma,0},\notag\\
V_{0}^\ast:=\big\{y\in V':\,\langle y\rangle_{\Omega}=0\big\},
&\qquad  V_{\Gamma,0}^\ast:=\big\{y_{\Gamma}\in V_{\Gamma}':\,\langle y_{\Gamma}\rangle_{\Gamma}=0\big\},
\end{array}
\end{align}
and
\begin{align}
\mathcal{L}^{2}_{(0)}:=H_{0}\times H_{\Gamma,0},\quad\mathcal{H}^{k}_{(0)}:=\mathcal{H}^{k}\cap\mathcal{L}^{2}_{(0)}, \quad\mathcal{V}^{k}_{(0)}:=\mathcal{V}^{k}\cap\mathcal{L}^{2}_{(0)}.\notag
\end{align}
We also define the projection operators
\begin{align}
&\mathbf{P}_{\Omega}:H\rightarrow H_{0},\quad y\mapsto y-\langle y\rangle_{\Omega},\quad\forall\,y\in H,\notag\\
&\mathbf{P}_{\Gamma}:H_{\Gamma}\rightarrow H_{\Gamma,0},\quad y_{\Gamma}\mapsto y_{\Gamma}-\langle y_{\Gamma}\rangle_{\Gamma},\quad\forall\,y_{\Gamma}\in H_{\Gamma}.\notag
\end{align}
Thanks to the Poincar\'e-Wirtinger inequality, there exists a positive constant $C_{p}$ such that
\begin{align}
&\Vert y\Vert_{V}^{2}\leq C_{p}\Vert y\Vert_{V_{0}}^{2},\quad\Vert y\Vert_{V_{0}}:=\Big(\int_{\Omega}|\nabla y|^{2}\,\mathrm{d}x\Big)^{\frac{1}{2}},\quad\forall\,y\in V_{0},
\label{bulkpoin}\\
&\Vert y_{\Gamma}\Vert_{V_{\Gamma}}^{2}\leq C_{p}\Vert y_{\Gamma}\Vert_{V_{\Gamma,0}}^{2},\quad\Vert y_{\Gamma}\Vert_{V_{\Gamma,0}}:=\Big(\int_{\Gamma}|\nabla_{\Gamma}y_{\Gamma}|^{2}\,\mathrm{d}S\Big)^{\frac{1}{2}}, \quad\forall\,y_{\Gamma}\in V_{\Gamma,0}.
\label{surfacepoin}
\end{align}
For $k\in\mathbb{N}$ and $K\in[0,+\infty)$, we introduce the following notations
\begin{align*}
	\mathcal{H}_{K}^{k}:=\left\{
	\begin{array}{ll}
		\mathcal{H}^{k}&\text{if }K>0,\\
		\mathcal{V}^{k}&\text{if }K=0,
	\end{array}
	\right.
	\qquad\text{and}\qquad
		\mathcal{H}_{K,0}^{k}:=\left\{
	\begin{array}{ll}
		\mathcal{H}_{(0)}^{k}&\text{if }K>0,\\
		\mathcal{V}_{(0)}^{k}&\text{if }K=0.
	\end{array}
	\right.
\end{align*}
Besides, we consider the Hilbert spaces
\begin{align*}
	\mathcal{W}_{K}^{2}:=\left\{
	\begin{array}{ll}
	\big\{(y,y_{\Gamma})\in\mathcal{H}^{2}:\,K\partial_{\mathbf{n}}y
	=y_{\Gamma}-y\text{ a.e. on }\Gamma\big\},&\text{if }K>0,\\
	\mathcal{V}^{2},&\text{if }K=0,
	\end{array}
	\right.\quad\text{and}\quad
	\mathcal{W}_{K,0}^{2}:=\mathcal{W}_{K}^{2}\cap\mathcal{L}_{(0)}^{2},
\end{align*}
equipped with the usual inner product $(\cdot,\cdot)_{\mathcal{H}^{2}}$ and the associated norm $\|\cdot\|_{\mathcal{H}^{2}}$.

Like in \cite{CFW}, from the Lax-Milgram theorem, we can introduce the operator $\mathcal{N}_{\Omega}:V_{0}^\ast\rightarrow V_{0}$ by $u=\mathcal{N}_{\Omega}v$ if and only if $\langle u\rangle_{\Omega}=0$ and
\begin{align*}
\int_{\Omega}\nabla u\cdot\nabla z\,\mathrm{d}x=\langle v,z\rangle_{V',V},\quad\forall\,z\in V.
\end{align*}
Analogously, we define $\mathcal{N}_{\Gamma}:V_{\Gamma,0}^\ast\rightarrow V_{\Gamma,0}$ by $u_{\Gamma}=\mathcal{N}_{\Gamma}v_{\Gamma}$ if and only if $\langle u_{\Gamma}\rangle_{\Gamma}=0$ and
\begin{align*}
\int_{\Gamma}\nabla_{\Gamma}u_{\Gamma}\cdot\nabla_{\Gamma}z_{\Gamma}\,\mathrm{d}S=\langle v_{\Gamma},z_{\Gamma}\rangle_{V_{\Gamma}',V_{\Gamma}},\quad\forall\,z_{\Gamma}\in V_{\Gamma}.
\end{align*}
By virtue of these definitions, we can introduce the following equivalent norms
\begin{align*}
&\Vert y\Vert_{V_{0}^*}:=\Big(\int_{\Omega}|\nabla\mathcal{N}_{\Omega}y|^{2}\,\mathrm{d}x\Big)^{1/2},
\qquad  \forall\, y\in V_{0}^*,\\
&\Vert y\Vert_{V'}:=\Big(\Vert y - \langle y\rangle_\Omega \Vert_{V_{0}^*}^2+ |\langle y\rangle_\Omega|^2\Big)^{1/2},
\qquad \forall\, y\in V',\\
&\Vert y_{\Gamma}\Vert_{V_{\Gamma,0}^*}:=\Big(\int_{\Gamma}|\nabla_{\Gamma}\mathcal{N}_{\Gamma}y_{\Gamma}|^{2}\,\mathrm{d}S\Big)^{1/2},
\qquad  \forall\,y_{\Gamma}\in V_{\Gamma,0}^*,\\
&\Vert y_{\Gamma}\Vert_{V_{\Gamma}'}:=\Big(\Vert y_{\Gamma}- \langle y_{\Gamma}\rangle_\Gamma \Vert_{V_{\Gamma,0}^*}^2 + |\langle y_{\Gamma}\rangle_\Gamma|^2\Big)^{1/2},
\qquad  \forall\,y_{\Gamma}\in V_{\Gamma}'.
\end{align*}
Finally, we recall the following chain rules:
\begin{align*}
	&\frac{1}{2}\frac{\mathrm{d}}{\mathrm{d}t}\Vert y\Vert_{V_{0}^\ast}^{2}=\langle \partial_t y,\mathcal{N}_{\Omega}y\rangle_{V',V},\quad\forall\,y\in H^{1}(0,T;V_{0}^\ast),\\
	&\frac{1}{2}\frac{\mathrm{d}}{\mathrm{d}t}\Vert y_{\Gamma}\Vert_{V_{\Gamma,0}^\ast}^{2}=\langle \partial_t y_{\Gamma},\mathcal{N}_{\Gamma}y_{\Gamma}\rangle_{V_{\Gamma}',V_{\Gamma}},
	\quad\forall\,y_{\Gamma}\in H^{1}(0,T;V_{\Gamma,0}^\ast).
\end{align*}

\subsection{The initial boundary value problem}
For an arbitrary but given final time $T\in(0,+\infty)$, we denote  $Q_{T}:=\Omega\times(0,T)$ and $\Sigma_{T}:=\Gamma\times(0,T)$. If $T=+\infty$, we simply set $Q:=\Omega\times(0,+\infty)$ and $\Sigma:=\Gamma\times(0,+\infty)$.
In view of the decomposition \eqref{decom} for the bulk and surface potentials,
we reformulate our target problem \eqref{CH}--\eqref{initial} as follows:
\begin{align}
\left\{
\begin{array}{ll}
\partial_{t}\varphi=\Delta\mu,&\text{in }Q,\\
\mu=-\Delta\varphi+\beta(\varphi)+\pi(\varphi),&\text{in }Q,\\
\partial_{\mathbf{n}}\mu=0,&\text{on }\Sigma,\\
K\partial_{\mathbf{n}}\varphi=\psi-\varphi,&\text{on }\Sigma,\\
\partial_{t}\psi=\Delta_{\Gamma}\theta,&\text{on }\Sigma,\\
\theta=\partial_{\mathbf{n}}\varphi-\Delta_{\Gamma}\psi+\beta_{\Gamma}(\psi)+\pi_{\Gamma}(\psi),&\text{on }\Sigma,\\
\varphi|_{t=0}=\varphi_{0},&\text{in }\Omega,\\
\psi|_{t=0}=\psi_{0},&\text{on }\Gamma,
\end{array}\right.\label{model}
\end{align}
where $\varphi_{0}:\Omega\rightarrow[-1,1]$, $\psi_{0}:\Gamma\rightarrow[-1,1]$ are given functions.

Throughout this paper, we make the following assumptions (cf. \cite{CFW,LvWu-2}):
\begin{description}
\item[$\mathbf{(A1)}$]
The nonlinear convex functions $\widehat{\beta}$, $\widehat{\beta}_{\Gamma}$ belong to $C([-1,1])\cap C^{2}(-1,1)$. Their derivatives are denoted by $\beta=\widehat{\beta}'$, $\beta_{\Gamma}=\widehat{\beta}_{\Gamma}'$ such that $\beta$, $\beta_{\Gamma}\in C^{1}(-1,1)$ are monotone increasing functions. Moreover, it holds
\begin{align*}
&\lim_{r\rightarrow-1}\beta(r)=-\infty,\quad\lim_{r\rightarrow-1}\beta_{\Gamma}(r)=-\infty,\\
&\lim_{r\rightarrow1}\beta(r)=+\infty,\quad\lim_{r\rightarrow1}\beta_{\Gamma}(r)=+\infty,
\end{align*}
and the derivatives $\beta'$, $\beta_{\Gamma}'$ fulfill
\begin{align*}
    \beta'(r)\geq\varpi,\quad\beta_{\Gamma}'(r)\geq\varpi,\quad\forall \,r\in (-1,1)
\end{align*}
for some constant $\varpi>0$. Without loss of generality, we set $\widehat{\beta}(0)=\widehat{\beta}_{\Gamma}(0)=\beta(0)=\beta_{\Gamma}(0)=0$ and make the extension $\widehat{\beta}(r)=+\infty$, $\widehat{\beta}_{\Gamma}(r)=+\infty$ for $|r|>1$.
\item[$\mathbf{(A2)}$]  There exist positive constants $\varrho$, $c_{0}$ such that
  \begin{align}
|\beta(r)|\leq \varrho|\beta_{\Gamma}(r)|+c_{0},\quad\forall\,r\in (-1,1).\label{assum1}
  \end{align}
\item[$\mathbf{(A3)}$] $\widehat{\pi}, \widehat{\pi}_\Gamma \in C^1(\mathbb{R})$ and their derivatives $\pi$, $\pi_\Gamma$ are globally Lipschitz continuous with Lipschitz constants denoted by $\gamma_{1}$ and $\gamma_{2}$, respectively.
\item[$\mathbf{(A4)}$] The initial datum satisfies $(\varphi_{0},\psi_{0})\in\mathcal{H}_{K}^{1}$, $\widehat{\beta}(\varphi_{0})\in L^{1}(\Omega)$, $\widehat{\beta}_{\Gamma}(\psi_{0})\in L^{1}(\Gamma)$. Moreover, it holds
    $$
    m_{0}:=\langle\varphi_{0}\rangle_{\Omega}\in(-1,1)
    \quad \text{and}\quad m_{\Gamma0}:=\langle\psi_{0}\rangle_{\Gamma}\in(-1,1).
    $$
\end{description}
\begin{remark}\rm
In order to handle the bulk-surface interaction,  it is necessary to have a compatibility condition between the bulk and surface potentials.
In $\mathbf{(A2)}$, we take a common assumption where $\beta_{\Gamma}$ dominates $\beta$ (as seen in  \cite{CC,CFW,CGNS,CGS14,CGS17,LW}). This choice allows us to derive some crucial uniform estimates without encountering further technical issues. An alternative approach is to consider the bulk potential as the dominating one (cf. \cite{GMS09,GMS}).
\end{remark}

For the sake of convenience, below we shall use the bold notations
$$
\bm{\varphi}=(\varphi, \psi), \quad \bm{\mu}=(\mu, \theta),\quad \bm{\beta}=(\beta,\beta_\Gamma),
\quad \bm{\pi}=(\pi, \pi_\Gamma), \quad \bm{\varphi}_0=(\varphi_0, \psi_0),
$$
and also for generic elements $\boldsymbol{y}=(y,y_\Gamma)$ in the product spaces $\mathcal{L}^{2}$, $\mathcal{H}^{1}$, $\mathcal{V}^{1}$ etc. As a preliminary, we have the following result on the well-posedness of problem \eqref{model}:
\begin{proposition}[Well-posedness]
\label{weakexist}
Suppose that $\Omega\subset\mathbb{R}^{d}$ $(d\in\{2,3\})$ is a bounded domain with smooth boundary $\Gamma$, $K\in[0,+\infty)$ and the assumptions $\mathbf{(A1)}$--$\mathbf{(A4)}$ are satisfied. For any $T>0$, problem \eqref{model} admits a unique global weak solution $(\boldsymbol{\varphi},\boldsymbol{\mu})$ on $[0,T]$ in the following sense:
\begin{align}
&\varphi\in H^{1}(0,T;V')\cap L^{\infty}(0,T;V)\cap L^{2}(0,T;H^{2}(\Omega)),\notag\\
&\mu\in L^{2}(0,T;V),\quad\beta(\varphi)\in L^{2}(0,T;H),\notag\\
&\psi\in H^{1}(0,T;V_{\Gamma}')\cap L^{\infty}(0,T;V_{\Gamma})\cap L^{2}(0,T;H^{2}(\Gamma)),\notag\\
&\theta\in L^{2}(0,T;V_{\Gamma}),\quad\beta_{\Gamma}(\psi)\in L^{2}(0,T;H_{\Gamma}),\notag
\end{align}
such that
\begin{align}
&\langle\partial_{t}\varphi,z\rangle_{V',V}+\int_{\Omega}\nabla\mu\cdot\nabla z\,\mathrm{d}x
=0,\qquad\forall\,z\in V,\label{eq2.7}\\
&\langle\partial_{t}\psi,z_{\Gamma}\rangle_{V_{\Gamma}',V_{\Gamma}}
+\int_{\Gamma}\nabla_{\Gamma}\theta\cdot\nabla_{\Gamma}z_{\Gamma}\,\mathrm{d}S=0,
\qquad\forall\,z_{\Gamma}\in V_{\Gamma},\label{eq2.10}
\end{align}
for almost all $t\in(0,T)$ and
\begin{align}
&\mu=-\Delta\varphi+\beta(\varphi)+\pi(\varphi),&&\text{a.e. in }Q_{T},\label{eq2.8}\\
&K\partial_{\mathbf{n}}\varphi=\psi-\varphi,&&\text{a.e. on }\Sigma_{T},\label{eq2.9}\\
&\theta=\partial_{\mathbf{n}}\varphi-\Delta_{\Gamma}\psi+\beta_{\Gamma}(\psi)+\pi_{\Gamma}(\psi),&&\text{a.e. on }\Sigma_{T}.\label{eq2.11}
\end{align}
Moreover, the initial conditions are satisfied
\begin{align}
\varphi|_{t=0}=\varphi_{0}\ \text{ a.e. in }\Omega,\qquad\psi|_{t=0}=\psi_{0}\ \text{ a.e. on }\Gamma.\notag
\end{align}
Let $(\boldsymbol{\varphi}^{(i)},\boldsymbol{\mu}^{(i)})$ be two weak solutions to problem \eqref{model} corresponding to two given initial data $\boldsymbol{\varphi}_{0}^{(i)}$ $(i\in\{1,2\})$ that satisfy $\mathbf{(A4)}$. Then, there exists a positive constant $C$, depending on $\gamma_{1}$, $\gamma_{2}$, $\Omega$, $\Gamma$, $T$ and coefficients of system, such that
\begin{align}
	&\Vert\varphi^{(1)}-\varphi^{(2)}\Vert_{C([0,T];V')} +\Vert\psi^{(1)}-\psi^{(2)}\Vert_{C([0,T];V_{\Gamma}')} +\Vert\varphi^{(1)}-\varphi^{(2)}\Vert_{L^{2}(0,T;V)} +\Vert\psi^{(1)}-\psi^{(2)}\Vert_{L^{2}(0,T;V_{\Gamma})}
\notag\\
	&\quad\leq C\Big(\Vert\varphi_{0}^{(1)}-\varphi_{0}^{(2)}\Vert_{V'}
	+\Vert\psi_{0}^{(1)}-\psi_{0}^{(2)}\Vert_{V_{\Gamma}'}\Big).  \label{contidepen}
\end{align}
\end{proposition}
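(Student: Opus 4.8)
The plan is to obtain existence through a regularization scheme, then derive uniform a priori estimates from the energy structure, and finally establish uniqueness together with the continuous dependence estimate \eqref{contidepen} by a dual-norm argument. For existence, I would first replace the singular parts $\widehat{\beta}$, $\widehat{\beta}_{\Gamma}$ by their Moreau--Yosida regularizations $\widehat{\beta}_{\lambda}$, $\widehat{\beta}_{\Gamma,\lambda}$ ($\lambda\in(0,1]$), whose derivatives are globally Lipschitz. The resulting approximate problem has regular potentials, so its well-posedness follows from the available theory (cf. \cite{KL20,KS24} for $K>0$ and \cite{LW,GK} for $K=0$), possibly combined with a Faedo--Galerkin discretization. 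It then remains to prove $\lambda$-independent bounds and pass to the limit $\lambda\to0$ by weak/weak-$*$ compactness, using the Aubin--Lions lemma to recover strong convergence of the approximate phase-fields and a maximal monotone graph argument to identify the limits of $\beta_{\lambda}(\varphi_{\lambda})$, $\beta_{\Gamma,\lambda}(\psi_{\lambda})$ with $\beta(\varphi)$, $\beta_{\Gamma}(\psi)$, and in particular to confirm $\varphi,\psi\in[-1,1]$ almost everywhere.

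The heart of the matter is the set of uniform estimates. Testing (the approximate versions of) \eqref{eq2.7}, \eqref{eq2.10} with $\mu$, $\theta$ yields the energy identity \eqref{BEL-1}, which together with the mass conservation \eqref{massconservation} provides bounds for $\bm{\varphi}$ in $L^{\infty}(0,T;\mathcal{H}_K^1)$, for $\nabla\mu$, $\nabla_{\Gamma}\theta$ in $L^{2}(0,T;H)$ and $L^2(0,T;H_\Gamma)$, and, for $K>0$, for $\chi(K)^{1/2}\|\psi_\lambda-\varphi_\lambda\|_{L^\infty(0,T;H_\Gamma)}$. To control the means of $\mu$, $\theta$ I would test \eqref{eq2.8}, \eqref{eq2.11} with $\varphi-m_0$ and $\psi-m_{\Gamma0}$ and invoke a Miranville--Zelik-type inequality to bound the singular terms in $L^1$, which then gives $\mu,\theta\in L^2(0,T;V\times V_\Gamma)$. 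The crucial and most delicate step is the $L^2$-bound: testing \eqref{eq2.8} with $\beta(\varphi)$ and \eqref{eq2.11} with $\beta_\Gamma(\psi)$, the monotone contributions $\int_\Omega\beta'(\varphi)|\nabla\varphi|^2$ and $\int_\Gamma\beta_\Gamma'(\psi)|\nabla_\Gamma\psi|^2$ are nonnegative, but the integration by parts generates boundary couplings through $\partial_{\mathbf{n}}\varphi$; using the transmission/Robin condition \eqref{eq2.9} these combine into a quantity controlled by $\|\psi-\varphi\|_{H_\Gamma}$, while the compatibility assumption $\mathbf{(A2)}$, \eqref{assum1}, lets the bulk singular contribution on $\Gamma$ be dominated by its surface counterpart and absorbed. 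This yields $\beta(\varphi)\in L^2(0,T;H)$ and $\beta_\Gamma(\psi)\in L^2(0,T;H_\Gamma)$; a comparison argument in \eqref{eq2.8}, \eqref{eq2.11} then gives the $\mathcal{H}^2$-regularity via elliptic estimates for the bulk--surface system, and $\partial_t\varphi\in L^2(0,T;V')$, $\partial_t\psi\in L^2(0,T;V_\Gamma')$ follow directly from \eqref{eq2.7}, \eqref{eq2.10}.

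For uniqueness and \eqref{contidepen}, I would set $\varphi=\varphi^{(1)}-\varphi^{(2)}$ and analogously for $\psi,\mu,\theta$. Since $\partial_{\mathbf{n}}\mu=0$ decouples the bulk and boundary mass, both $\langle\varphi(t)\rangle_\Omega$ and $\langle\psi(t)\rangle_\Gamma$ are conserved, hence equal to the means of the initial differences; these generate the mean terms on the right-hand side of \eqref{contidepen}. For the zero-mean parts I would test the difference of \eqref{eq2.7} with $\mathcal{N}_\Omega(\mathbf{P}_\Omega\varphi)$ and that of \eqref{eq2.10} with $\mathcal{N}_\Gamma(\mathbf{P}_\Gamma\psi)$ and add them, obtaining $\tfrac12\tfrac{d}{dt}\big(\|\mathbf{P}_\Omega\varphi\|_{V_0^*}^2+\|\mathbf{P}_\Gamma\psi\|_{V_{\Gamma,0}^*}^2\big)$ together with the pairings of $\mathbf{P}_\Omega\varphi$, $\mathbf{P}_\Gamma\psi$ against $\mu$, $\theta$. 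Substituting \eqref{eq2.8}, \eqref{eq2.11} and integrating by parts produces the $\int_\Omega|\nabla\varphi|^2$, $\int_\Gamma|\nabla_\Gamma\psi|^2$ terms; the boundary couplings recombine through \eqref{eq2.9} into a nonnegative quantity (namely $K^{-1}\|\psi-\varphi\|_{H_\Gamma}^2$ for $K>0$, and nothing for $K=0$); the monotone terms $\int_\Omega(\beta(\varphi^{(1)})-\beta(\varphi^{(2)}))\varphi\ge0$ and its surface analogue are discarded; and the Lipschitz terms coming from $\bm{\pi}$ are handled by interpolation and Young's inequality. The remaining nonzero-mean corrections are bounded by the conserved means, so Gronwall's lemma closes the estimate, and uniqueness follows by choosing identical initial data.

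I expect the principal obstacle to be the uniform $L^2$-control of $\beta(\varphi)$ and $\beta_\Gamma(\psi)$ in the second step: the boundary terms coupling bulk and surface through $\partial_{\mathbf{n}}\varphi$ do not vanish and are not individually signed, so their correct recombination via \eqref{eq2.9} and the domination mechanism supplied by $\mathbf{(A2)}$ are essential. A secondary difficulty, in the continuous-dependence step, is that the difference of two solutions need not have zero mean, which forces the estimate to be carried out on the projected components $\mathbf{P}_\Omega\varphi$, $\mathbf{P}_\Gamma\psi$ with the means tracked separately.
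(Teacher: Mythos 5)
Your proposal takes essentially the same route as the paper: Moreau--Yosida regularization of the singular nonlinearities, solvability of the approximate problem by known methods (the paper's sketch defers to the time-discretization of \cite{CFW} for $K=0$ and the Faedo--Galerkin construction of \cite{CKSS} for $K\in(0,+\infty)$, adding viscous terms $\sigma\partial_t\varphi_{\varepsilon,\sigma}$, $\sigma\partial_t\psi_{\varepsilon,\sigma}$ in \eqref{appro1}), uniform estimates obtained exactly by the mechanisms you describe (energy identity, mean control, and the $L^{2}$-bound on $\boldsymbol{\beta}$ exploiting the sign of the boundary coupling via \eqref{eq2.9} together with the domination assumption $\mathbf{(A2)}$), and the dual-norm energy method of \cite[Theorem 2.4]{CFW} for \eqref{contidepen}. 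The one detail to flag is that your key $L^{2}$-estimate must be uniform in $\lambda$, which requires $\mathbf{(A2)}$ to hold for the approximations themselves; this forces the surface Yosida regularization to be defined with the rescaled parameter $\lambda\varrho$, i.e. $\beta_{\Gamma,\lambda}(r)=\frac{1}{\lambda\varrho}\big(r-(I+\lambda\varrho\beta_{\Gamma})^{-1}(r)\big)$ as in the paper and \cite{CC}, since the naive same-parameter regularization need not inherit \eqref{assum1}.
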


\begin{remark}\rm
	\label{V-conti}
The uniqueness of weak solutions is a straightforward consequence of the continuous dependence estimate \eqref{contidepen}. Since $T>0$ is arbitrary, the solution is well-defined on the interval $[0,+\infty)$. Additionally, due to the Sobolev embedding theorem and the Aubin-Lions-Simon lemma (see Lemma \ref{ALS}), the regularity of the bulk and surface phase functions leads to the continuity property
$\boldsymbol{\varphi}\in C_{w}([0,+\infty);\mathcal{H}_{K}^{1})\cap C([0,+\infty);\mathcal{L}^{2})$. This ensures that the initial data can be attained.
\end{remark}

\subsection{Statement of main results}
We are now ready to present our main results. When discussing the strict separation property and long-time behavior of global weak solutions, we examine the problem over the entire time interval $[0,+\infty)$. For the double obstacle limit, we shall work on a finite time interval $[0,T]$ for an arbitrary but fixed final time $T\in(0,+\infty)$.

\begin{theorem}[Eventual separation]
\label{eventual}
Suppose that $\Omega\subset\mathbb{R}^{d}$ $(d\in\{2,3\})$ is a bounded domain with smooth boundary $\Gamma$, $K\in[0,+\infty)$ and $\mathbf{(A1)}$--$\mathbf{(A4)}$ are satisfied. Let $(\boldsymbol{\varphi},\boldsymbol{\mu})$ be the unique global weak solution to problem \eqref{model} obtained in Proposition \ref{weakexist}. There exist a constant  $\delta_{1}\in(0,1)$ and a sufficiently large time $T_{\mathrm{SP}}\gg 1$ such that
\begin{align}
\Vert\varphi(t)\Vert_{L^{\infty}(\Omega)}\leq1-\delta_{1},\quad\Vert\psi(t)\Vert_{L^{\infty}(\Gamma)}\leq1-\delta_{1},\quad\forall \;t\geq T_{\mathrm{SP}}.\label{2.1}
\end{align}
\end{theorem}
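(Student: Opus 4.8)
The plan is to exploit the gradient-flow structure of \eqref{model} and a characterization of the $\omega$-limit set, thereby reducing the eventual separation of the evolving solution to a \emph{static} separation estimate for stationary states. By the energy identity \eqref{BEL-1}, $t\mapsto E(\boldsymbol{\varphi}(t))$ is non-increasing and bounded from below, hence converges to some $E_\infty$, and the dissipation integral $\int_0^{+\infty}\big(\|\nabla\mu\|_H^2+\|\nabla_\Gamma\theta\|_{H_\Gamma}^2\big)\,\mathrm{d}t$ is finite. Combined with the uniform-in-time higher-order bounds furnished by the regularity propagation established in Section~3 (which yield $\sup_{t\ge1}\|\boldsymbol{\varphi}(t)\|_{\mathcal{H}^2}<+\infty$), the orbit $\{\boldsymbol{\varphi}(t):t\ge1\}$ is bounded in $\mathcal{H}^2$, hence relatively compact in $C(\overline{\Omega})\times C(\Gamma)$ via the compact embedding $\mathcal{H}^2\hookrightarrow\hookrightarrow\mathcal{H}^s\hookrightarrow C(\overline{\Omega})\times C(\Gamma)$ for some $s\in(d/2,2)$ (available since $d\le 3$). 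Consequently the $\omega$-limit set $\omega(\boldsymbol{\varphi}_0)$ is nonempty and compact in this topology.

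Next I would characterize $\omega(\boldsymbol{\varphi}_0)$. For $\boldsymbol{\varphi}_\infty=(\varphi_\infty,\psi_\infty)\in\omega(\boldsymbol{\varphi}_0)$, pick $t_n\to+\infty$ with $\boldsymbol{\varphi}(t_n)\to\boldsymbol{\varphi}_\infty$ in $C(\overline{\Omega})\times C(\Gamma)$. Passing to the limit in the time-shifted weak formulation \eqref{eq2.7}--\eqref{eq2.11} on $[t_n,t_n+1]$ and using the vanishing of the dissipation forces $\mu(t_n)$, $\theta(t_n)$ to converge to constants $\mu_\infty$, $\theta_\infty$, so that $\boldsymbol{\varphi}_\infty$ solves the stationary problem
\begin{align*}
&-\Delta\varphi_\infty+\beta(\varphi_\infty)+\pi(\varphi_\infty)=\mu_\infty\ \text{ in }\Omega,\qquad K\partial_{\mathbf{n}}\varphi_\infty=\psi_\infty-\varphi_\infty\ \text{ on }\Gamma,\\
&\partial_{\mathbf{n}}\varphi_\infty-\Delta_\Gamma\psi_\infty+\beta_\Gamma(\psi_\infty)+\pi_\Gamma(\psi_\infty)=\theta_\infty\ \text{ on }\Gamma,
\end{align*}
with mass constraints $\langle\varphi_\infty\rangle_\Omega=m_0\in(-1,1)$, $\langle\psi_\infty\rangle_\Gamma=m_{\Gamma0}\in(-1,1)$ inherited from \eqref{massconservation} and $\mathbf{(A4)}$. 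Testing the two equations with $\varphi_\infty-m_0$ and $\psi_\infty-m_{\Gamma0}$ and using that the means lie strictly inside $(-1,1)$ gives $\|\beta(\varphi_\infty)\|_{L^1(\Omega)}+\|\beta_\Gamma(\psi_\infty)\|_{L^1(\Gamma)}\le C$, whence $|\mu_\infty|+|\theta_\infty|\le C$, with $C$ depending only on $E_\infty$, the data and the coefficients.

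The crux is to upgrade this to an $L^\infty$ bound $\|\beta(\varphi_\infty)\|_{L^\infty(\Omega)}+\|\beta_\Gamma(\psi_\infty)\|_{L^\infty(\Gamma)}\le C$, which by $\mathbf{(A1)}$ yields $\|\varphi_\infty\|_{L^\infty(\Omega)}\le 1-\delta$ and $\|\psi_\infty\|_{L^\infty(\Gamma)}\le 1-\delta$ for some $\delta>0$. I would run a Moser-type iteration: multiply the bulk equation by $\beta(\varphi_\infty)|\beta(\varphi_\infty)|^{2p-2}$, the surface equation by $\beta_\Gamma(\psi_\infty)|\beta_\Gamma(\psi_\infty)|^{2p-2}$, integrate and add. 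The monotonicity $\beta',\beta_\Gamma'\ge\varpi>0$ makes the gradient contributions nonnegative, the Lipschitz bounds from $\mathbf{(A3)}$ control the $\pi,\pi_\Gamma$ terms, and the bounded constants $\mu_\infty,\theta_\infty$ produce lower-order terms absorbed by Young's inequality. The delicate point, which I expect to be the main obstacle, is the boundary contribution $\int_\Gamma\partial_{\mathbf{n}}\varphi_\infty\,(\cdots)\,\mathrm{d}S$ generated by the integration by parts in the bulk together with the normal derivative in \eqref{eq2.11} and the transmission condition \eqref{eq2.9}: for $K>0$ the Robin condition bounds $\partial_{\mathbf{n}}\varphi_\infty$ in $L^\infty(\Gamma)$, while for $K=0$ one uses $\varphi_\infty|_\Gamma=\psi_\infty$, and in both cases the compatibility assumption $\mathbf{(A2)}$, $|\beta(r)|\le\varrho|\beta_\Gamma(r)|+c_0$, is exactly what permits absorbing the bulk boundary power of $\beta$ into the surface integral of $\beta_\Gamma$ (modulo controllable trace terms). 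Keeping these bounds uniform in the iteration index $p$ and letting $p\to+\infty$ then gives the desired $L^\infty$ bound, hence separation of every stationary pair.

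Finally, since $\omega(\boldsymbol{\varphi}_0)$ is compact in $C(\overline{\Omega})\times C(\Gamma)$ and each element is separated, the continuous functional $\Phi(\varphi,\psi):=\max\{\|\varphi\|_{L^\infty(\Omega)},\|\psi\|_{L^\infty(\Gamma)}\}$ attains on it a maximum $M^*<1$; fix $\delta_1\in(0,1-M^*)$. If \eqref{2.1} failed for this $\delta_1$, there would exist $t_n\to+\infty$ with $\Phi(\boldsymbol{\varphi}(t_n))>1-\delta_1$; extracting a subsequence converging in $C(\overline{\Omega})\times C(\Gamma)$ to some $\boldsymbol{\varphi}_\infty\in\omega(\boldsymbol{\varphi}_0)$ and using the continuity of $\Phi$ would give $\Phi(\boldsymbol{\varphi}_\infty)\ge 1-\delta_1>M^*$, contradicting $\Phi(\boldsymbol{\varphi}_\infty)\le M^*$. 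Hence there exist $\delta_1\in(0,1)$ and $T_{\mathrm{SP}}\gg 1$ such that \eqref{2.1} holds.
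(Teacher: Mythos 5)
Your outer skeleton --- uniform $\mathcal{H}^2$ bounds giving relative compactness of the orbit in $C(\overline{\Omega})\times C(\Gamma)$, identification of $\omega(\boldsymbol{\varphi}_0)$ with stationary solutions having bounded constant chemical potentials $\mu_\infty,\theta_\infty$, a \emph{uniform} separation estimate for those stationary solutions, and a final compactness/contradiction step --- is exactly the paper's ``dynamic approach'' to Theorem \ref{eventual}, and those parts are sound. The gap sits precisely at the step you flag as the crux, and your proposed resolution of it fails. Testing the bulk equation with $|\beta(\varphi_\infty)|^{2p-2}\beta(\varphi_\infty)$ but the surface equation with $|\beta_\Gamma(\psi_\infty)|^{2p-2}\beta_\Gamma(\psi_\infty)$ leaves the boundary term
\begin{equation*}
B_p=\int_\Gamma\partial_{\mathbf{n}}\varphi_\infty\Big(|\beta(\varphi_\infty)|^{2p-2}\beta(\varphi_\infty)-|\beta_\Gamma(\psi_\infty)|^{2p-2}\beta_\Gamma(\psi_\infty)\Big)\,\mathrm{d}S,
\end{equation*}
and none of your controls closes the iteration. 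For $K>0$ the Robin condition \eqref{eq2.9} produces a sign only when the bracket is one and the same monotone function evaluated at the two traces; with two different nonlinearities it gives nothing. The pointwise bound $|\partial_{\mathbf{n}}\varphi_\infty|\le 2/K$ is true, but you are then left with $\int_\Gamma|\beta(\varphi_\infty|_\Gamma)|^{2p-1}\,\mathrm{d}S$, and $\mathbf{(A2)}$ cannot absorb it into $\int_\Gamma|\beta_\Gamma(\psi_\infty)|^{2p}\,\mathrm{d}S$: the assumption compares $\beta$ and $\beta_\Gamma$ at the \emph{same} argument, whereas $\varphi_\infty|_\Gamma\neq\psi_\infty$ when $K>0$. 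For $K=0$ the arguments do match, but then (i) $\partial_{\mathbf{n}}\varphi_\infty$ is a priori only in $H^{1/2}(\Gamma)$, not $L^\infty(\Gamma)$, so H\"older forces norms of $\beta_\Gamma$ above the $L^{2p}$ level and the iteration does not close; and (ii) even granting an $L^\infty$ bound on $\partial_{\mathbf{n}}\varphi_\infty$, raising $\mathbf{(A2)}$ to the power $2p-1$ creates a prefactor of size $(2\varrho)^{2p}$ in front of the subcritical term $\int_\Gamma|\beta_\Gamma(\psi_\infty)|^{2p-1}\,\mathrm{d}S$; such a prefactor enters the resulting bound for $\|\beta_\Gamma(\psi_\infty)\|_{L^{2p}(\Gamma)}$ linearly (it is not tamed by taking $2p$-th roots), so the $L^{2p}$ bounds are not uniform in $p$ whenever $\varrho>1/2$, and the limit $p\to\infty$ yields nothing.

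The repair is the device the paper uses (it invokes the analogue of \cite[Lemma 4.1]{FW}; the same structure appears in its Lemma \ref{regh2}): test \emph{both} equations with powers of the \emph{same} function $\beta$, i.e.\ $|\beta(\varphi_\infty)|^{2p-2}\beta(\varphi_\infty)$ in the bulk and $|\beta(\psi_\infty)|^{2p-2}\beta(\psi_\infty)$ on the surface. Then $B_p$ vanishes when $K=0$ (equal traces), and when $K>0$ it equals $\frac1K\int_\Gamma(\psi_\infty-\varphi_\infty)\big(|\beta(\varphi_\infty)|^{2p-2}\beta(\varphi_\infty)-|\beta(\psi_\infty)|^{2p-2}\beta(\psi_\infty)\big)\,\mathrm{d}S\le0$, because $r\mapsto|\beta(r)|^{2p-2}\beta(r)$ is nondecreasing. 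Assumption $\mathbf{(A2)}$ is then used only \emph{linearly} on the surface: since $\beta$ and $\beta_\Gamma$ have the same sign (both vanish at $0$ and are increasing), one has $\beta_\Gamma(\psi_\infty)|\beta(\psi_\infty)|^{2p-2}\beta(\psi_\infty)\ge\frac1\varrho|\beta(\psi_\infty)|^{2p}-\frac{c_0}{\varrho}|\beta(\psi_\infty)|^{2p-1}$, with a $p$-independent constant $1/\varrho$. Dropping the nonnegative gradient terms and using $|\mu_\infty|+|\theta_\infty|\le M_5$, this gives $\sup_{p}\big(\|\beta(\varphi_\infty)\|_{L^{2p}(\Omega)}+\|\beta(\psi_\infty)\|_{L^{2p}(\Gamma)}\big)\le C$ uniformly over $\omega(\boldsymbol{\varphi}_0)$, hence $L^\infty$ bounds, and the blow-up of $\beta$ at $\pm1$ in $\mathbf{(A1)}$ yields a separation constant $\widetilde{\delta}$ that is uniform on the $\omega$-limit set (note that separation of $\psi_\infty$ already follows from the bound on $\beta(\psi_\infty)$; no bound on $\beta_\Gamma(\psi_\infty)$ is needed). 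With this substitution your final contradiction argument goes through unchanged.
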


If one of the following additional assumptions is fulfilled, we can establish the instantaneous strict separation property in two dimensions (cf. \cite{GGG,GP}):
\begin{description}
	\item[$\mathbf{(A5a)}$] There exist constants $C_{\sharp}>0$ and $\gamma_{\sharp}\in[1,2)$ such that
	\begin{align}
		\beta'(r)\leq C_{\sharp}e^{C_{\sharp}|\beta(r)|^{\gamma_{\sharp}}},\quad\forall\,r\in(-1,1).\notag
	\end{align}
	\item[$\mathbf{(A5b)}$] As $\delta\rightarrow0^{+}$, for some $\kappa>1/2$, it holds
	\begin{align}
		\frac{1}{\beta(1-2\delta)}=O\Big(\frac{1}{|\ln \delta|^{\kappa}}\Big),\quad	\frac{1}{|\beta(-1+2\delta)|}=O\Big(\frac{1}{|\ln \delta|^{\kappa}}\Big).\notag
	\end{align}
\end{description}
\begin{theorem}[Instantaneous separation in 2D]
\label{intantaneous}
Suppose that $\Omega\subset\mathbb{R}^{2}$ is a bounded domain with smooth boundary $\Gamma$, $K\in[0,+\infty)$ and $\mathbf{(A1)}$--$\mathbf{(A4)}$ are satisfied. Assume in addition,  $\mathbf{(A5a)}$ or $\mathbf{(A5b)}$ holds. Let $(\boldsymbol{\varphi},\boldsymbol{\mu})$ be the unique global weak solution to problem \eqref{model} obtained in Proposition \ref{weakexist}. Then for any $\tau>0$, there exists a constant $\delta_{2}\in(0,1)$, depending on $\tau$, $m_{0}$, $m_{\Gamma0}$, $E(\boldsymbol{\varphi}_{0})$, $\Omega$, $\Gamma$ and coefficients of system, such that
\begin{align}
\Vert \varphi(t)\Vert_{L^{\infty}(\Omega)}\leq1-\delta_{2},\quad\Vert \psi(t)\Vert_{L^{\infty}(\Gamma)}\leq1-\delta_{2}, \quad\forall\,t\geq\tau.\notag
\end{align}
\end{theorem}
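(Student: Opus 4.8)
The plan is to prove only the upper bounds $\varphi\le 1-\delta_2$ and $\psi\le 1-\delta_2$, the lower bounds near $-1$ being entirely symmetric, and to treat the two sufficient conditions $\mathbf{(A5a)}$ and $\mathbf{(A5b)}$ by the two schemes announced in the introduction. In either case I would first record the uniform-in-time estimates available on $[\tau/2,+\infty)$: from the energy dissipation \eqref{BEL-1} together with the regularity propagation of Section 3, one has $\boldsymbol{\varphi}\in L^{\infty}(\tau/2,+\infty;\mathcal{H}_{K}^{1})$ and uniformly-local bounds for $\boldsymbol{\mu}$ and for $\boldsymbol{\beta}(\boldsymbol{\varphi})=(\beta(\varphi),\beta_{\Gamma}(\psi))$. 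The restriction $d=2$ enters here, and essentially only here, through the Sobolev--Trudinger--Moser embedding $V\hookrightarrow L^{p}(\Omega)$ with $\|v\|_{L^{p}(\Omega)}\le C\sqrt{p}\,\|v\|_{V}$ (and its surface analogue), which is the source of the admissible growth exponents in $\mathbf{(A5a)}$--$\mathbf{(A5b)}$.

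The device that handles the bulk--surface coupling, common to both schemes, is the following coupled testing identity. For a monotone nondecreasing $g$, I would test the bulk relation \eqref{eq2.8} by $g(\varphi)$ over $\Omega$, the surface relation \eqref{eq2.11} by $g(\psi)$ over $\Gamma$, and add. After integration by parts the two boundary integrals combine, and using \eqref{eq2.9} together with the monotonicity of $g$,
\begin{align}
\int_{\Gamma}\partial_{\mathbf{n}}\varphi\,\big(g(\psi)-g(\varphi|_{\Gamma})\big)\,\mathrm{d}S
=\frac{1}{K}\int_{\Gamma}(\psi-\varphi|_{\Gamma})\big(g(\psi)-g(\varphi|_{\Gamma})\big)\,\mathrm{d}S\ \ge\ 0,
\notag
\end{align}
while for $K=0$ the two boundary integrals simply cancel since $\varphi|_{\Gamma}=\psi$. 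Thus the coupling contributes with a favorable sign, and one is left with a clean inequality controlling $\int_{\Omega}g'(\varphi)|\nabla\varphi|^{2}+\int_{\Gamma}g'(\psi)|\nabla_{\Gamma}\psi|^{2}$ and $\int_{\Omega}\beta(\varphi)g(\varphi)+\int_{\Gamma}\beta_{\Gamma}(\psi)g(\psi)$ by $\int_{\Omega}\mu\,g(\varphi)+\int_{\Gamma}\theta\,g(\psi)$ plus the lower-order contributions of the globally Lipschitz $\boldsymbol{\pi}$ from $\mathbf{(A3)}$. Throughout, the compatibility condition $\mathbf{(A2)}$ is used to dominate the bulk nonlinearity by the surface one on $\Gamma$, so that the two potentials can be processed consistently and the scheme stays uniform in $K\in[0,+\infty)$.

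Under $\mathbf{(A5a)}$ I would follow the direct method of \cite{GGG}: choosing $g$ of exponential/high-power type in $\beta(\varphi)$ in the identity above and invoking the two-dimensional Trudinger--Moser inequality, the exponent $\gamma_{\sharp}<2$ guarantees convergence of the relevant exponential integrals and yields, after a bootstrap in the power, a uniform bound $\|\beta(\varphi)\|_{L^{\infty}(\Omega)}+\|\beta_{\Gamma}(\psi)\|_{L^{\infty}(\Gamma)}\le C$ for all $t\ge\tau$; since $\beta,\beta_{\Gamma}\to\pm\infty$ at $\pm1$, this is equivalent to the asserted separation. Under $\mathbf{(A5b)}$ I would instead run the De Giorgi iteration of \cite{GP} on the product space: for levels $k_{n}=1-\delta-\delta 2^{-n}\nearrow 1-\delta$, I take the monotone truncation $g(\varphi)=(\beta(\varphi)-\beta(k_{n}))_{+}$, so that the sign identity applies, and derive from the coupled inequality a nonlinear recursion $y_{n+1}\le C\,b^{\,n}y_{n}^{1+\varepsilon}$ for an energy $y_{n}$ associated with the super-level sets $\{\varphi>k_{n}\}$ and $\{\psi>k_{n}\}$. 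Here $\varepsilon>0$ comes from the 2D embedding, while the mild growth condition $\kappa>1/2$ controls the constants $\beta(k_{n})$ and $1/\beta(k_{n})$ along the iteration. The uniformly-local $L^{1}$-type bound on $\boldsymbol{\beta}(\boldsymbol{\varphi})$ makes $y_{0}$ small once $\delta$ is small enough (Chebyshev on $\{\varphi>1-2\delta\}$), so that the fast-geometric-convergence lemma forces $y_{n}\to0$, i.e.\ $\varphi\le1-\delta$ and $\psi\le1-\delta$.

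I expect the main obstacle to be precisely the bulk--surface interaction: each scheme must be run \emph{simultaneously} in $\Omega$ and on $\Gamma$, keeping the truncation levels and test functions consistent for the two potentials, and reconciling the separation of the trace $\varphi|_{\Gamma}$ with that of $\psi$ (which no longer coincide when $K>0$). The sign identity of the second paragraph neutralizes the dangerous boundary term carried by $\partial_{\mathbf{n}}\varphi$, but the careful bookkeeping of the surface contributions, dominated via $\mathbf{(A2)}$, and the uniformity of all constants with respect to both $t\ge\tau$ and $K\in[0,+\infty)$ is where the real work lies. The passage from an a.e.-in-time bound to the separation for \emph{all} $t\ge\tau$ would then follow by combining the uniformly-local estimates with the continuity $\boldsymbol{\varphi}\in C([0,+\infty);\mathcal{L}^{2})$ recorded in Remark \ref{V-conti}.
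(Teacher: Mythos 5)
Your proposal is correct and follows essentially the same route as the paper: the same two schemes (the direct method of \cite{GGG} under $\mathbf{(A5a)}$, the De Giorgi iteration of \cite{GP,LvWu-2} under $\mathbf{(A5b)}$), and the same decisive observation that monotone test functions render the Robin coupling term $\tfrac{1}{K}\int_\Gamma(\psi-\varphi)\bigl(g(\varphi)-g(\psi)\bigr)\,\mathrm{d}S$ signed (it cancels when $K=0$), which is exactly how the paper neutralizes the bulk--surface interaction in Lemma \ref{regh2} and in both proofs of Theorem \ref{intantaneous}. The one imprecision is your $\mathbf{(A5a)}$ endgame: a ``bootstrap in the power'' cannot by itself upgrade $\sup_{t\ge\tau}\|\beta(\varphi(t))\|_{L^p}\lesssim\sqrt{p}$ to an $L^\infty$ bound; the paper instead uses $\mathbf{(A5a)}$ and the exponential-moment estimate to bound $\beta'(\varphi)$ (and $\beta'(\psi)$) uniformly in $L^6$, combines this with $\nabla\varphi\in L^6$ (from the uniform $\mathcal{H}^2$ bound) via the chain rule to get $\beta(\varphi)\in W^{1,3}(\Omega)$, and concludes with the two-dimensional embedding $W^{1,3}\hookrightarrow C(\overline{\Omega})$ --- all ingredients you have already recorded, so this is a local repair rather than a gap.
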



Thanks to the eventual separation property obtained in Theorem \ref{eventual}, we are able to show that every global weak solution  converges to a single equilibrium as $t\to +\infty$.
\begin{theorem}[Convergence to equilibrium]
\label{equilibrium}
Suppose that the assumptions in Theorem \ref{eventual} are satisfied.
Assume in addition, $\beta$, $\beta_{\Gamma}$ are real analytic on $(-1,1)$ and $\pi$, $\pi_{\Gamma}$ are real analytic on $\mathbb{R}$. Let $\boldsymbol{\varphi}$ be the unique global weak solution to problem  \eqref{model} obtained in Proposition \ref{weakexist}. We have
\begin{align*}
\lim_{t\rightarrow+\infty} \Vert\boldsymbol{\varphi}(t)-\boldsymbol{\varphi}_{\infty}\Vert_{\mathcal{H}^{2}}=0,
\end{align*}
where $\boldsymbol{\varphi}_{\infty}:=(\varphi_{\infty},\psi_{\infty})$ is a steady state that satisfies the following elliptic problem
\begin{align}
\left\{
\begin{array}{ll}
-\Delta \varphi_{\infty}+\beta(\varphi_{\infty})+\pi(\varphi_{\infty})=\mu_{\infty},&\text{in }\Omega,\\
-\Delta_{\Gamma}\psi_{\infty}+\beta_{\Gamma}(\psi_{\infty})+\pi_{\Gamma}(\psi_{\infty})
+\partial_{\mathbf{n}}\varphi_{\infty}=\theta_{\infty},&\text{on }\Gamma,\\
K\partial_{\mathbf{n}}\varphi_{\infty}=\psi_{\infty}-\varphi_{\infty},&\text{on }\Gamma,
\end{array}\right.\notag
\end{align}
with $\langle\varphi_{\infty}\rangle_{\Omega}=m_{0}$, $\langle\psi_{\infty}\rangle_{\Gamma}=m_{\Gamma0}$ and
\begin{align*}
	&\mu_{\infty}=\frac{1}{|\Omega|}\Big[\int_{\Omega} \big(\beta(\varphi_{\infty})+\pi(\varphi_{\infty})\big)\,\mathrm{d}x
	-\int_{\Gamma}\partial_{\mathbf{n}}\varphi_{\infty}\,\mathrm{d}S\Big],\\
	&\theta_{\infty}=\frac{1}{|\Gamma|}\int_{\Gamma} \Big(\beta_{\Gamma}(\psi_{\infty})+\pi_{\Gamma}(\psi_{\infty}) +\partial_{\mathbf{n}}\varphi_{\infty}\Big)\,\mathrm{d}S.
\end{align*}
Moreover, the following estimate on convergence rate holds
\begin{align}
\Vert\boldsymbol{\varphi}(t)-\boldsymbol{\varphi}_{\infty}\Vert_{\mathcal{H}^{1}}\leq C\big(1+t\big)^{-\frac{\varsigma^{*}}{1-2\varsigma^{*}}},
\quad\forall\,t\geq0,\label{conver-rate}
\end{align}
where $\varsigma^{*}\in(0,1/2)$ is a constant depending on $\boldsymbol{\varphi}_{\infty}$, $m_{0}$, $m_{\Gamma0}$, $\Omega$, $\Gamma$ and coefficients of system, the positive constant $C$ may further depend on $E(\boldsymbol{\varphi}_{0})$.
\end{theorem}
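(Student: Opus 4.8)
The plan is to follow the gradient-flow / {\L}ojasiewicz-Simon strategy, the key enabling input being the eventual separation property of Theorem \ref{eventual}. First I would exploit that, for $t\ge T_{\mathrm{SP}}$, both $\varphi$ and $\psi$ take values in a fixed compact subset $[-1+\delta_{1},1-\delta_{1}]$ of $(-1,1)$, so that $\beta(\varphi)$, $\beta_{\Gamma}(\psi)$ and their derivatives are uniformly bounded along the trajectory. Inserting this into the elliptic identity $\mu=-\Delta\varphi+\beta(\varphi)+\pi(\varphi)$ together with the boundary relations \eqref{eq2.9} and \eqref{eq2.11}, and using elliptic regularity for the coupled bulk-surface system, I would bootstrap the weak solution of Proposition \ref{weakexist} to a uniform-in-time bound $\sup_{t\ge T_{\mathrm{SP}}}\|\boldsymbol{\varphi}(t)\|_{\mathcal{H}^{2}}<+\infty$. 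This renders the forward orbit relatively compact in $\mathcal{H}^{2-\epsilon}$ for every $\epsilon>0$.

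Next I would record the dissipative structure in a convenient form. By the chain rules for the dual norms together with $\partial_{t}\varphi=\Delta\mu$, $\partial_{t}\psi=\Delta_{\Gamma}\theta$ and mass conservation, one has $\|\partial_{t}\varphi\|_{V_{0}^{\ast}}=\|\nabla\mu\|_{H}$ and $\|\partial_{t}\psi\|_{V_{\Gamma,0}^{\ast}}=\|\nabla_{\Gamma}\theta\|_{H_{\Gamma}}$, so the energy identity \eqref{BEL-1} becomes $\frac{\mathrm{d}}{\mathrm{d}t}E(\boldsymbol{\varphi}(t))+\|\partial_{t}\varphi\|_{V_{0}^{\ast}}^{2}+\|\partial_{t}\psi\|_{V_{\Gamma,0}^{\ast}}^{2}=0$. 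Hence $E(\boldsymbol{\varphi}(t))$ is non-increasing and bounded below, whence it converges to some $E_{\infty}$, and the total dissipation is finite. Combined with the precompactness, the $\omega$-limit set is nonempty, compact and connected; passing to the limit along a sequence $t_{n}\to+\infty$ with vanishing dissipation shows that every $\boldsymbol{\varphi}_{\infty}\in\omega(\boldsymbol{\varphi}_{0})$ is a steady state solving the stated elliptic system, with $\mu,\theta$ reduced to the constants $\mu_{\infty},\theta_{\infty}$ fixed by the conserved means $m_{0},m_{\Gamma0}$, and that $E\equiv E_{\infty}$ on $\omega(\boldsymbol{\varphi}_{0})$.

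The core of the argument is the extended {\L}ojasiewicz-Simon inequality of Lemma \ref{LS}. Because $\beta,\beta_{\Gamma},\pi,\pi_{\Gamma}$ are real analytic and the strict separation confines the trajectory to the analyticity domain, the reduced energy is real analytic near each $\boldsymbol{\varphi}_{\infty}$, and Lemma \ref{LS} furnishes $\varsigma^{*}\in(0,1/2)$, $C>0$ and a neighborhood on which $|E(\boldsymbol{\varphi})-E_{\infty}|^{1-\varsigma^{*}}\le C\|E'(\boldsymbol{\varphi})\|_{(\mathcal{V}_{(0)}^{1})'}$. Testing the weak formulation and using the Poincar\'e-Wirtinger inequalities \eqref{bulkpoin}, \eqref{surfacepoin} to bound $\|E'(\boldsymbol{\varphi})\|_{(\mathcal{V}_{(0)}^{1})'}\le C(\|\nabla\mu\|_{H}+\|\nabla_{\Gamma}\theta\|_{H_{\Gamma}})$, I obtain the energy gap in terms of the square root of the dissipation rate. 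Setting $\mathcal{H}(t)=(E(\boldsymbol{\varphi}(t))-E_{\infty})^{\varsigma^{*}}$ and combining the energy identity with this estimate and the elementary bound $(a^{2}+b^{2})/(a+b)\ge(a+b)/2$ yields $-\frac{\mathrm{d}}{\mathrm{d}t}\mathcal{H}(t)\ge c\big(\|\partial_{t}\varphi\|_{V_{0}^{\ast}}+\|\partial_{t}\psi\|_{V_{\Gamma,0}^{\ast}}\big)$ for $t$ large. Integrating proves the orbit has finite length, hence converges to a single limit, and the uniform $\mathcal{H}^{2}$ bound plus interpolation upgrade this to convergence in $\mathcal{H}^{2}$ and identify the limit as a single equilibrium $\boldsymbol{\varphi}_{\infty}$. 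For the rate \eqref{conver-rate}, feeding the {\L}ojasiewicz inequality back into the energy identity produces the scalar differential inequality $Y'(t)\le-c\,Y(t)^{2(1-\varsigma^{*})}$ for $Y(t):=E(\boldsymbol{\varphi}(t))-E_{\infty}\ge0$; since $\varsigma^{*}<1/2$ gives $2(1-\varsigma^{*})>1$, this forces the algebraic decay $Y(t)\lesssim(1+t)^{-1/(1-2\varsigma^{*})}$, and integrating the length estimate from time $t$ onward yields the weak-norm decay $\lesssim Y(t)^{\varsigma^{*}}$, which together with the uniform higher-order bounds gives the stated $\mathcal{H}^{1}$ rate.

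I expect the principal difficulty to lie in establishing Lemma \ref{LS} in the coupled setting rather than in the dynamic argument, which is by now fairly standard. One must choose constrained function spaces encoding both the bulk and surface zero-mean conditions and the $K$-dependent coupling --- the Dirichlet transmission $\varphi|_{\Gamma}=\psi$ when $K=0$ and the Robin condition \eqref{eq2.9} when $K>0$ --- verify that the reduced energy is analytic there, and, most delicately, prove that the linearization $E''(\boldsymbol{\varphi}_{\infty})$ is a Fredholm operator so that the abstract {\L}ojasiewicz-Simon theorem applies. The independent conservation of mass in the bulk and on the boundary introduces two distinct Lagrange multipliers that must be tracked simultaneously, and the analysis must be carried out uniformly across the two structurally different regimes $K=0$ and $K>0$.
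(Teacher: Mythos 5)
Your strategy coincides with the paper's own proof: eventual separation (Theorem \ref{eventual}), compactness of the orbit, the extended {\L}ojasiewicz--Simon inequality of Lemma \ref{LS}, the Haraux--Jendoubi finite-length argument, and the scalar ODE argument for the rate. Two of your choices are harmless variants. You state the LS inequality with the dual norm $\|E'(\boldsymbol{\varphi})\|_{(\mathcal{V}^{1}_{(0)})'}$, whereas the paper's Lemma \ref{LS} uses the $\mathcal{L}^{2}_{(0)}$-norm of the projected chemical potentials; after the Poincar\'e--Wirtinger step both yield the same lower bound $C(\|\nabla\mu\|_{H}+\|\nabla_{\Gamma}\theta\|_{H_{\Gamma}})\geq |E(\boldsymbol{\varphi})-E_{\infty}|^{1-\varsigma^{*}}$, so this is inessential (although for $K>0$ the constrained space should be of $\mathcal{H}^{1}_{(0)}$-type rather than $\mathcal{V}^{1}_{(0)}$, since no trace identification is imposed). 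Also, your phrase ``for $t$ large'' hides the step where one shows the orbit enters and \emph{remains} in the LS neighborhood; this is the contradiction argument of \cite{HJ} that the paper invokes explicitly, but it is standard and at the same level of detail as the paper's own sketch.

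There is, however, one genuine gap: your regularity bootstrap stops at $\sup_{t\geq T_{\mathrm{SP}}}\|\boldsymbol{\varphi}(t)\|_{\mathcal{H}^{2}}<+\infty$, and at the end you claim that ``the uniform $\mathcal{H}^{2}$ bound plus interpolation upgrade this to convergence in $\mathcal{H}^{2}$.'' This step fails. Interpolating the weak-norm convergence (in $V_{0}^{\ast}\times V_{\Gamma,0}^{\ast}$, or in $\mathcal{H}^{1}$) against a uniform bound in $\mathcal{H}^{2}$ only produces strong convergence in $\mathcal{H}^{s}$ for every $s<2$, together with weak convergence in $\mathcal{H}^{2}$; to obtain strong convergence in $\mathcal{H}^{2}$ itself one must interpolate against a bound in a strictly stronger space. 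Note also that the uniform $\mathcal{H}_{K}^{2}$ bound is already available \emph{without} separation (Lemma \ref{higher-order}, estimate \eqref{eq3.21}), so your first paragraph adds nothing new; the real payoff of the separation property at this stage is one extra derivative. This is exactly what the paper's Lemma 5.1 provides: once the solution is strictly separated, $\beta(\varphi)$ and $\beta_{\Gamma}(\psi)$ are bounded in $V\times V_{\Gamma}$, and viewing \eqref{eq2.8}, \eqref{eq2.9}, \eqref{eq2.11} as a coupled bulk--surface elliptic system with $V\times V_{\Gamma}$ right-hand sides, elliptic regularity (e.g. \cite[Theorem 3.3]{KL}) yields $\|\boldsymbol{\varphi}\|_{L^{\infty}(T_{\mathrm{SP}},+\infty;\mathcal{H}_{K}^{3})}\leq M_{6}$, i.e. \eqref{5.1}. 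With this $\mathcal{H}^{3}$ bound the orbit is precompact in $\mathcal{H}^{2}$, and interpolation between the $V_{0}^{\ast}\times V_{\Gamma,0}^{\ast}$ convergence and \eqref{5.1} gives the stated $\mathcal{H}^{2}$ convergence (and legitimizes the $\mathcal{H}^{1}$ rate \eqref{conver-rate}, up to the usual harmless degradation of the exponent, which is absorbed by redefining $\varsigma^{*}$). So the fix is simply to push your own bootstrap one step further, but as written the final conclusion of the theorem does not follow.
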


The last result is about the double obstacle limit. To this end, recalling \eqref{logarithmic}, for every $\Theta\in(0,1]$, we take the bulk and boundary free energy densities $F,G$ in problem \eqref{model} as
$$
 F_{\Theta}(\varphi)=\frac{\Theta}{2} F_0(\varphi)-\frac{\Theta_c}{2}\varphi^{2},\quad
 G_{\Theta}(\psi)=\frac{\Theta}{2} F_0(\psi)-\frac{\Theta_c}{2}\psi^{2},\quad\text{with }F_{0}\text{ given in }\eqref{logarithmic}.
$$
It is easy to check that the assumptions $\mathbf{(A1)}$--$\mathbf{(A3)}$ are fulfilled. Besides, for all $\boldsymbol{\varphi}\in \mathcal{H}_K^1$, $\widehat{\beta}(\varphi)\in L^{1}(\Omega)$, $\widehat{\beta}_{\Gamma}(\psi)\in L^{1}(\Gamma)$, we find
\begin{align*}
\lim_{\Theta\rightarrow0}E_{\Theta}(\boldsymbol{\varphi})
&=\frac{1}{2}\int_{\Omega}|\nabla \varphi|^{2}\,\mathrm{d}x +\int_{\Omega}I_{[-1,1]}(\varphi)\,\mathrm{d}x -\frac{\Theta_{c}}{2}\int_\Omega \varphi^{2}\,\mathrm{d}x +\frac{1}{2}\int_{\Gamma}|\nabla_{\Gamma}\psi|^{2}\,\mathrm{d}S\\
&\quad
+\int_{\Gamma}I_{[-1,1]}(\psi)\,\mathrm{d}S
- \frac{\Theta_{c}}{2}\int_\Gamma \psi^{2}\,\mathrm{d}S +\frac{\chi(K)}{2}\int_{\Gamma}|\psi-\varphi|^{2}\,\mathrm{d}S\\
&=:E_{0}(\boldsymbol{\varphi}).
\end{align*}
Hence, we denote the problem \eqref{model} correspond to $\Theta\in (0,1]$ by $(S_{\Theta})$ with the associated total free energy $E_{\Theta}(\boldsymbol{\varphi})$, and denote the limit problem related to $E_{0}$ by $(S_{0})$, that is,
\begin{align}
 (S_{0}) \
\left\{
\begin{array}{ll}
\partial_{t}\varphi=\Delta\mu,&\text{in }Q,\\
\mu+\Delta\varphi+\Theta_{c}\varphi\in\partial I_{[-1,1]}(\varphi),&\text{in }Q,\\
\partial_{\mathbf{n}}\mu=0,&\text{on }\Sigma,\\
K\partial_{\mathbf{n}}\varphi=\psi-\varphi,&\text{on }\Sigma,\\
\partial_{t}\psi=\Delta_{\Gamma}\theta,&\text{on }\Sigma,\\
\theta -\partial_{\mathbf{n}}\varphi+ \Delta_{\Gamma}\psi+ \Theta_{c}\psi \in\partial I_{[-1,1]}(\psi),&\text{on }\Sigma,\\
\varphi|_{t=0}=\varphi_{0},&\text{in }\Omega,\\
\psi|_{t=0}=\psi_{0},&\text{on }\Gamma.
\end{array}\right.\label{model-2obs}
\end{align}

The following result implies that weak solutions of problem $(S_{\Theta})$ converge to the weak solution of problem $(S_{0})$ as $\Theta\to 0$.
\begin{theorem}[Double obstacle limit]
	\label{doubleobstacle}
Suppose that $\Omega\subset\mathbb{R}^{d}$ $(d\in\{2,3\})$ is a bounded domain with smooth boundary $\Gamma$, $K\in [0,+\infty)$ and $T>0$ is an arbitrary but fixed final time.
Let $0<\Theta_k\leq 1$, $k\in \mathbb{Z}^+$ be such that $\lim_{k\to +\infty}\Theta_k=0$.
Moreover, we assume $\boldsymbol{\varphi}_{0,k}$, $\boldsymbol{\varphi}_{0}\in \mathcal{H}_K^1$ satisfying $\lim_{k\to +\infty}\|\boldsymbol{\varphi}_{0,k}- \boldsymbol{\varphi}_{0}\|_{\mathcal{H}^1}=0$ and
\begin{align*}
& \|\varphi_{0,k}\|_{L^\infty(\Omega)}\leq 1,
\quad \|\psi_{0,k}\|_{L^\infty(\Gamma)}\leq 1,
\quad \sup_{k\in \mathbb{Z}^+}|\langle\varphi_{0,k}\rangle_\Omega|<1,
\quad \sup_{k\in \mathbb{Z}^+}|\langle\psi_{0,k}\rangle_\Gamma|<1.
\end{align*}
Let $(\boldsymbol{\varphi}_{\Theta_k},\boldsymbol{\mu}_{\Theta_k})$ be the unique weak solution to problem $(S_{\Theta_k})$ with the initial data $\boldsymbol{\varphi}_{0,k}$ and the nonlinearities
$F_{\Theta_k}(\varphi_{\Theta_k})$, $G_{\Theta_k}(\psi_{\Theta_k})$.
Then there exist limit functions $(\widetilde{\boldsymbol{\varphi}},\widetilde{\boldsymbol{\mu}},\widetilde{\boldsymbol{\xi}})$
such that as $k\to +\infty$, it holds
		\begin{align}
		\varphi_{\Theta_k}&\rightarrow\widetilde{\varphi}&&\text{weakly in }H^{1}(0,T;V')\cap L^{2}(0,T;H^{2}(\Omega))\notag\\
		&  &&\text{weakly star in }L^{\infty}(0,T;V)\notag\\
		&  &&\text{strongly in }C([0,T];H)\label{conver1}\\
        \psi_{\Theta_k}&\rightarrow\widetilde{\psi}&&\text{weakly in }H^{1}(0,T;V_{\Gamma}')\cap L^{2}(0,T;H^{2}(\Gamma))\notag\\
		&  &&\text{weakly star in }L^{\infty}(0,T;V_{\Gamma})\notag\\
		&  &&\text{strongly in }C([0,T];H_{\Gamma})\label{conver4}\\
		\mu_{\Theta_k}&\rightarrow\widetilde{\mu}&&\text{weakly in }L^{2}(0,T;V)\label{conver2}\\		
		\theta_{\Theta_k}&\rightarrow\widetilde{\theta}&&\text{weakly in }L^{2}(0,T;V_{\Gamma})\label{conver5}\\
\Theta_k f_{0}(\varphi_{\Theta_k})&\rightarrow\widetilde{\xi}&&\text{weakly in }L^{2}(0,T;H)\label{conver3}\\
		\Theta_k f_{0}(\psi_{\Theta_k})&\rightarrow\widetilde{\xi}_{\Gamma}&&\text{weakly in }L^{2}(0,T;H_{\Gamma})\label{conver6}
	\end{align}
where $ \widetilde{\mu}+\Delta\widetilde{\varphi}+\Theta_{c}\widetilde{\varphi} = \widetilde{\xi} \in\partial I_{[-1,1]}(\widetilde{\varphi})$ almost everywhere in $Q_T$  and $\widetilde{\theta}-\partial_\mathbf{n}\widetilde{\varphi} +\Delta_\Gamma\widetilde{\psi}+\Theta_{c}\widetilde{\psi} = \widetilde{\xi}_\Gamma \in\partial I_{[-1,1]}(\widetilde{\psi})$ almost everywhere in $\Sigma_T$. Furthermore, $(\boldsymbol{\widetilde{\varphi}},\boldsymbol{\widetilde{\mu}},\boldsymbol{\widetilde{\xi}})$ is a weak solution to problem \eqref{model-2obs} on $[0,T]$.
\end{theorem}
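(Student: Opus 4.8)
The plan is to establish the double obstacle limit via uniform (in $\Theta_k$) a priori estimates followed by compactness and passage to the limit in the weak formulation. First I would derive the key uniform bounds. Testing the energy equality \eqref{BEL-1} and exploiting that $E_{\Theta_k}(\boldsymbol{\varphi}_{0,k})$ is uniformly bounded (which follows from $\Theta_k\le 1$, the convergence $\boldsymbol{\varphi}_{0,k}\to\boldsymbol{\varphi}_0$ in $\mathcal{H}^1$, and the pointwise bound $0\le \Theta_k F_0(r)\le \Theta_k F_0(1)$ on $[-1,1]$), I obtain, after integration over $[0,T]$, uniform bounds for $\boldsymbol{\varphi}_{\Theta_k}$ in $L^\infty(0,T;\mathcal{H}_K^1)$ and for $\nabla\mu_{\Theta_k}$, $\nabla_\Gamma\theta_{\Theta_k}$ in $L^2(0,T;H)$, $L^2(0,T;H_\Gamma)$. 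Because $\|\varphi_{\Theta_k}\|_{L^\infty(\Omega)}\le 1$ and $\|\psi_{\Theta_k}\|_{L^\infty(\Gamma)}\le 1$ automatically from the structure of the potential, together with mass conservation the spatial means stay bounded away from $\pm 1$ uniformly in $k$, so the standard mean-value control of $\mu_{\Theta_k}$ and $\theta_{\Theta_k}$ gives full $L^2(0,T;V)$ and $L^2(0,T;V_\Gamma)$ bounds. From the equations \eqref{eq2.7}--\eqref{eq2.10} I then read off uniform bounds on $\partial_t\varphi_{\Theta_k}$ in $L^2(0,T;V')$ and $\partial_t\psi_{\Theta_k}$ in $L^2(0,T;V_\Gamma')$.

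The central difficulty, as in the deep quench literature (\cite{BE,EL,Abels11}), is obtaining a uniform bound on the singular terms $\Theta_k f_0(\varphi_{\Theta_k})$ and $\Theta_k f_0(\psi_{\Theta_k})$ in $L^2(0,T;H)$ and $L^2(0,T;H_\Gamma)$, since these do \emph{not} follow from the energy estimate (the energy only controls $\Theta_k F_0$, not $\Theta_k f_0$). The plan here is to test \eqref{eq2.8} and \eqref{eq2.11} against $\Theta_k f_0(\varphi_{\Theta_k})$ and $\Theta_k f_0(\psi_{\Theta_k})$ respectively, integrate by parts, and exploit monotonicity of $f_0$: the Laplacian terms produce nonnegative contributions $\int_\Omega f_0'(\varphi_{\Theta_k})|\nabla\varphi_{\Theta_k}|^2\,\mathrm{d}x\ge 0$ and similarly on $\Gamma$, the bulk-surface coupling term $\int_\Gamma \partial_\mathbf{n}\varphi_{\Theta_k}(\cdots)\,\mathrm{d}S$ is handled using \eqref{eq2.9} and the one-sided monotonicity of $f_0$, and the remaining terms are controlled by the already-established $L^2$ bounds on $\mu_{\Theta_k}$, $\theta_{\Theta_k}$ together with Young's inequality. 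This yields $\|\Theta_k f_0(\varphi_{\Theta_k})\|_{L^2(0,T;H)}+\|\Theta_k f_0(\psi_{\Theta_k})\|_{L^2(0,T;H_\Gamma)}\le C$ uniformly, and hence via comparison in \eqref{eq2.8}, \eqref{eq2.11} a uniform $L^2(0,T;H^2)$ bound on $\boldsymbol{\varphi}_{\Theta_k}$.

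With these estimates in hand, I extract a subsequence (not relabeled) realizing all the weak, weak-star and (by Aubin--Lions--Simon, Lemma \ref{ALS}) strong convergences claimed in \eqref{conver1}--\eqref{conver6}; the strong convergence of $\varphi_{\Theta_k}$ in $C([0,T];H)$ and $\psi_{\Theta_k}$ in $C([0,T];H_\Gamma)$ is what allows passage to the limit in the initial conditions. Passing to the limit in the linear equations \eqref{eq2.7}, \eqref{eq2.10} and in the trace relation \eqref{eq2.9} is immediate by weak convergence. The delicate point is identifying the limits of the inclusions: I must show $\widetilde{\xi}=\widetilde{\mu}+\Delta\widetilde{\varphi}+\Theta_c\widetilde{\varphi}\in\partial I_{[-1,1]}(\widetilde{\varphi})$ a.e. in $Q_T$ and the analogous surface relation. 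This I would accomplish by a standard maximal-monotonicity argument: passing to the limit in \eqref{eq2.8}, \eqref{eq2.11} defines $\widetilde{\xi}$, $\widetilde{\xi}_\Gamma$ as the weak limits, while the constraint $|\widetilde{\varphi}|\le 1$, $|\widetilde{\psi}|\le 1$ follows from the strong $L^2$ convergence; the inclusion into the subdifferential is then obtained by verifying $\limsup_k \int\!\!\int \Theta_k f_0(\varphi_{\Theta_k})\,\varphi_{\Theta_k}\le \int\!\!\int \widetilde{\xi}\,\widetilde{\varphi}$ and invoking the weak-strong closedness of the maximal monotone graph (via Minty's trick, using the strong convergence of $\varphi_{\Theta_k}$ to pair against the weak limit $\widetilde{\xi}$). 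The main obstacle I anticipate is precisely this identification in the presence of the bulk-surface coupling, where the subdifferential inclusions on $\Omega$ and $\Gamma$ are entangled through the normal derivative $\partial_\mathbf{n}\widetilde{\varphi}$ in \eqref{eq2.11}; handling them simultaneously requires treating the coupled system in the product-space framework $\mathcal{H}_K^2$ so that the monotone-graph closure argument can be applied to the full pair $(\widetilde{\varphi},\widetilde{\psi})$ rather than to the bulk and surface components in isolation.
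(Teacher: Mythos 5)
There is a genuine gap in the ordering of your a priori estimates, and it is exactly the point where the bulk--surface coupling makes this problem harder than the Neumann case treated in \cite{EL,Abels11}. In your first paragraph you claim that mass conservation plus ``the standard mean-value control'' yields \emph{uniform} $L^{2}(0,T;V)$ and $L^{2}(0,T;V_{\Gamma})$ bounds on $\mu_{\Theta_k}$ and $\theta_{\Theta_k}$ before any control of $\Theta_k f_0$ or of $\boldsymbol{\varphi}_{\Theta_k}$ in $\mathcal{H}^{2}$ has been obtained. This step does not go through when $K=0$: to bound the means $\langle\mu_{\Theta_k}\rangle_{\Omega}$, $\langle\theta_{\Theta_k}\rangle_{\Gamma}$ one integrates \eqref{eq2.8} over $\Omega$ and \eqref{eq2.11} over $\Gamma$, and to bound $\|\Theta_k f_0(\varphi_{\Theta_k})\|_{L^{1}}$ one tests \eqref{eq2.8} by $\varphi_{\Theta_k}-\langle\varphi_{0,k}\rangle_{\Omega}$; in every one of these computations the boundary term $\int_{\Gamma}\partial_{\mathbf{n}}\varphi_{\Theta_k}(\cdots)\,\mathrm{d}S$ survives, and for $K=0$ the normal derivative is a genuine unknown (the Lagrange multiplier of the transmission condition), with no a priori bound. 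Your second paragraph then controls $\Theta_k f_0$ in $L^{2}$ using ``the already-established $L^{2}$ bounds on $\mu_{\Theta_k}$, $\theta_{\Theta_k}$'', and only afterwards deduces the $H^{2}$ bound --- but the $H^{2}$/trace bound is precisely what is needed to control $\partial_{\mathbf{n}}\varphi_{\Theta_k}$ and hence to establish those potential bounds in the first place. The argument is circular. The paper breaks this circle differently: Lemmas \ref{kappa2} and the following lemma keep the quantity $\|\partial_{\mathbf{n}}\varphi_{\Theta_k}\|_{L^{2}(0,T;H_{\Gamma})}$ explicitly on the right-hand sides (see \eqref{kappauni2}, \eqref{kappauni3}), and the loop is closed only at the end via elliptic regularity, $\|\boldsymbol{\varphi}_{\Theta_k}\|_{L^{2}(0,T;\mathcal{H}_{K}^{2})}\leq C\big(1+\|\partial_{\mathbf{n}}\varphi_{\Theta_k}\|_{L^{2}(0,T;H_{\Gamma})}\big)$, the trace theorem $\|\partial_{\mathbf{n}}\varphi_{\Theta_k}\|_{L^{2}(0,T;H_{\Gamma})}\leq C\|\varphi_{\Theta_k}\|_{L^{2}(0,T;H^{r}(\Omega))}$ with $r\in(3/2,2)$, and Ehrling's lemma (Lemma \ref{Ehrling}), which allows the absorption into $\tfrac12\|\boldsymbol{\varphi}_{\Theta_k}\|_{L^{2}(0,T;\mathcal{H}_{K}^{2})}+C$. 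Your proposal contains no such absorption mechanism, and without it the estimates \eqref{kappauni7} are not obtained. (For $K>0$ your ordering could be rescued, since \eqref{eq2.9} gives $|\partial_{\mathbf{n}}\varphi_{\Theta_k}|\leq 2/K$ a.e.; but the theorem covers $K=0$.)

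Two further, more minor points of comparison. First, your treatment of the coupling term in the $\Theta_k f_0$ test via \eqref{eq2.9} and monotonicity of $f_0$ is correct and matches the paper. Second, for the identification $\widetilde{\xi}\in\partial I_{[-1,1]}(\widetilde{\varphi})$ you propose Minty's trick, whereas the paper follows \cite{Abels11}: the interpolation $\|f\|_{L^{\infty}}\leq C\|f\|_{L^{2}}^{1-d/4}\|f\|_{H^{2}}^{d/4}$ upgrades \eqref{conver1}, \eqref{conver4} to strong convergence in $L^{2}(0,T;C(\overline{\Omega}))$ and $L^{2}(0,T;C(\Gamma))$, after which the inclusion is read off pointwise on the sets $\{|\widetilde{\varphi}|<1\}$ and $\{\widetilde{\varphi}=\pm1\}$ by a sign argument. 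Your monotonicity route is viable but as stated is imprecise: the operators $\Theta_k f_0$ vary with $k$, so one cannot invoke closedness of a single fixed maximal monotone graph; one needs either Mosco/graph convergence of $\Theta_k F_0$ to $I_{[-1,1]}$, or the explicit comparison $\iint(\Theta_k f_0(\varphi_{\Theta_k})-\Theta_k f_0(v))(\varphi_{\Theta_k}-v)\geq0$ against test functions $v$ with $\|v\|_{L^{\infty}}<1$, for which $\Theta_k f_0(v)\to0$ uniformly. Also, contrary to your last concern, no product-space monotone-graph argument is needed: once the limit equations are established, the bulk and surface inclusions decouple and are identified separately.
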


\section{Basic Properties of Global Weak Solutions}
\label{globalregularity}
\setcounter{equation}{0}

In this section, we present some preliminary results for problem \eqref{model} with $K\in [0,+\infty)$.
This includes examining the well-posedness, mass conservation, energy dissipation and regularity propagation of global weak solutions.

\subsection{Well-posedness}
For completeness and convenience of the subsequent analysis, we sketch the proof of Proposition \ref{weakexist}. To this end, let us recall the approximating problem considered in \cite{CFW,CKSS}.  For every $\varepsilon\in (0,1)$, set $\beta_{\varepsilon}$, $\beta_{\Gamma,\varepsilon}:\mathbb{R}\rightarrow\mathbb{R}$, along with the associated resolvent operators $J_{\varepsilon}$, $J_{\Gamma,\varepsilon}:\mathbb{R}\rightarrow\mathbb{R}$ given by (see also \cite{CC,CF15})
\begin{align*} &\beta_{\varepsilon}(r):=\frac{1}{\varepsilon}\big(r-J_{\varepsilon}(r)\big) :=\frac{1}{\varepsilon}\big(r-(I+\varepsilon\beta)^{-1}(r)\big), \\ &\beta_{\Gamma,\varepsilon}(r):=\frac{1}{\varepsilon\varrho} \big(r-J_{\Gamma,\varepsilon}(r)\big) :=\frac{1}{\varepsilon\varrho}\big(r-(I+\varepsilon\varrho\beta_{\Gamma})^{-1}(r)\big),
\end{align*}
for all $r\in\mathbb{R}$, where $\varrho>0$ is the same constant as in the condition \eqref{assum1}.
Then the related Moreau-Yosida regularizations $\widehat{\beta}_{\varepsilon}$, $\widehat{\beta}_{\Gamma,\varepsilon}$ of $\widehat{\beta}$, $\widehat{\beta}_{\Gamma}:\mathbb{R}\rightarrow\mathbb{R}$ are then given by (see, e.g., \cite{R.E.S})
\begin{align}
	&\widehat{\beta}_{\varepsilon}(r) :=\inf_{s\in\mathbb{R}}\left\{\frac{1}{2\varepsilon}|r-s|^{2} +\widehat{\beta}(s)\right\}=\frac{1}{2\varepsilon}|r-J_{\varepsilon}(r)|^{2} +\widehat{\beta}\big(J_{\varepsilon}(r)\big) =\int_{0}^{r}\beta_{\varepsilon}(s)\,\mathrm{d}s,
	\notag\\
	&\widehat{\beta}_{\Gamma,\varepsilon}(r) :=\inf_{s\in\mathbb{R}}\left\{\frac{1}{2\varepsilon\varrho}|r-s|^{2} +\widehat{\beta}_{\Gamma}(s)\right\}=\int_{0}^{r}\beta_{\Gamma,\varepsilon}(s)\,\mathrm{d}s.
	\notag
\end{align}
Then for any $\varepsilon,\sigma\in(0,1)$, we consider the following approximating problem:
\begin{align}
\left\{
	\begin{array}{ll}
		\partial_{t}\varphi_{\varepsilon,\sigma}=\Delta\mu_{\varepsilon,\sigma}, &\quad\text{a.e. in }Q_{T},\\
		\mu_{\varepsilon,\sigma}=\sigma\partial_{t}\varphi_{\varepsilon,\sigma} -\Delta\varphi_{\varepsilon,\sigma}
		+\beta_{\varepsilon}(\varphi_{\varepsilon,\sigma}) +\pi(\varphi_{\varepsilon,\sigma}),&\quad\text{a.e. in }Q_{T},\\
		\partial_{\mathbf{n}}\mu_{\varepsilon,\sigma}=0,
&\quad\text{a.e. on }\Sigma_{T},\\
		K\partial_{\mathbf{n}}\varphi_{\varepsilon,\sigma} =\psi_{\varepsilon,\sigma}-\varphi_{\varepsilon,\sigma},
&\quad\text{a.e. on }\Sigma_{T},\\		\partial_{t}\psi_{\varepsilon,\sigma}=\Delta_{\Gamma}\theta_{\varepsilon,\sigma}, &\quad\text{a.e. on }\Sigma_{T},\\
		\theta_{\varepsilon,\sigma}=\sigma\partial_{t}\psi_{\varepsilon,\sigma} +\partial_{\mathbf{n}}\varphi_{\varepsilon,\sigma}
		-\Delta_{\Gamma}\psi_{\varepsilon,\sigma}
+\beta_{\Gamma,\varepsilon}(\psi_{\varepsilon,\sigma})
		+\pi_{\Gamma}(\psi_{\varepsilon,\sigma}),
&\quad\text{a.e. on }\Sigma_{T},\\
		\varphi_{\varepsilon,\sigma}|_{t=0}=\varphi_{0},
&\quad\text{a.e. in }\Omega,\\
		\psi_{\varepsilon,\sigma}|_{t=0}=\psi_{0},
&\quad\text{a.e. on }\Gamma.
	\end{array}
\right.\label{appro1}
\end{align}
\begin{remark}{\rm
In \eqref{appro1}, we have included two viscous terms $\sigma\partial_{t}\varphi_{\varepsilon,\sigma}$ and $\sigma\partial_{t}\psi_{\varepsilon,\sigma}$ in the bulk and surface chemical potentials. This allows us to achieve better regularity for the time derivative $\partial_{t}\boldsymbol{\varphi}_{\varepsilon,\sigma}$, and to handle the cases $K=0$, $K\in(0,+\infty)$ in a unified manner. As demonstrated in \cite{CKSS}, these two viscous terms are not necessary for the existence of global weak solutions to the approximating problem when $K\in (0,+\infty)$.}
\end{remark}


\begin{proof}[\textbf{Sketch of the Proof for Proposition \ref{weakexist}}.]
The existence of weak solutions can be found in \cite{CFW,CKSS}. Since the parameter $K$ yields different structure for the system, different methods are required. For the case $K=0$, the proof relies on a suitable  time-discretization scheme combined with the general theory of the maximal monotone operator and the compactness argument (see \cite{CFW}). For the case $K\in(0,+\infty)$, the solution can be constructed directly using a Faedo-Galerkin scheme and the compactness argument (see \cite{CKSS}). By considering the asymptotic limit $K\to 0$, this also provides an alternative approach to obtain the existence of weak solutions with $K=0$. Anyway, for $K\in [0,+\infty)$, we can conclude that problem \eqref{appro1} admits a unique weak solution satisfying
\begin{align*}
&\boldsymbol{\varphi}_{\varepsilon,\sigma}\in H^{1}(0,T;\mathcal{L}^{2})\cap L^{\infty}(0,T;\mathcal{H}_{K}^{1})\cap L^{2}(0,T;\mathcal{H}_{K}^{2}),\\
&\boldsymbol{\mu}_{\varepsilon,\sigma}\in L^{2}(0,T;\mathcal{H}^{1}).
\end{align*}
In this manner, we can obtain a family of approximating solutions $(\boldsymbol{\varphi}_{\varepsilon,\sigma},\boldsymbol{\mu}_{\varepsilon,\sigma})$ satisfying sufficient \emph{a priori} estimates that are uniform with respect to the approximating parameters $\varepsilon,\sigma\in(0,1)$. By the compactness argument and passing to the limit as $(\varepsilon,\sigma)\to(0,0)$ (up to a subsequence), we can find a limit pair $(\boldsymbol{\varphi},\boldsymbol{\mu})$ that gives the global weak solution to the original problem \eqref{model} on $[0,T]$.
The continuous dependence estimate \eqref{contidepen} in the case with $K=0$ has been proven in \cite[Theorem 2.4]{CFW} using the energy method. The argument therein can be extended to the case $K\in (0,+\infty)$ with a minor modification. The details are omitted here.
\end{proof}

\subsection{Mass conservation and energy equality}
\begin{proposition}[Mass conservation]
For all $t\geq0$, it holds
\begin{align}
  \langle\varphi(t)\rangle_{\Omega}=m_{0},\quad \langle\psi(t)\rangle_{\Gamma}=m_{\Gamma0}.\label{massconser}
\end{align}
\end{proposition}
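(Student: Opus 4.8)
The plan is to exploit the weak formulations \eqref{eq2.7} and \eqref{eq2.10} by testing against the constant function, which annihilates the gradient (diffusion) terms. First I would take $z\equiv 1\in V$ in \eqref{eq2.7}. Since $\nabla 1=0$, the term $\int_{\Omega}\nabla\mu\cdot\nabla z\,\mathrm{d}x$ vanishes, leaving $\langle\partial_{t}\varphi(t),1\rangle_{V',V}=0$ for almost every $t\in(0,T)$. Because $\varphi\in H^{1}(0,T;V')$, the real-valued map $t\mapsto\langle\varphi(t),1\rangle_{V',V}$ belongs to $H^{1}(0,T)\hookrightarrow C([0,T])$ and satisfies $\frac{\mathrm{d}}{\mathrm{d}t}\langle\varphi(t),1\rangle_{V',V}=\langle\partial_{t}\varphi(t),1\rangle_{V',V}=0$ for a.e. $t$; hence this quantity is constant in time.

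Next I would identify the pairing with the bulk spatial mean. Since $\varphi\in L^{\infty}(0,T;V)\subset L^{\infty}(0,T;H)$, for a.e. $t$ the element $\varphi(t)\in H\hookrightarrow V'$ and, through the Gelfand triple $V\hookrightarrow H\hookrightarrow V'$, the duality pairing reduces to the $L^{2}$ inner product, so that $\langle\varphi(t),1\rangle_{V',V}=\int_{\Omega}\varphi(t)\,\mathrm{d}x=|\Omega|\langle\varphi(t)\rangle_{\Omega}$. Combining with the previous step, $|\Omega|\langle\varphi(t)\rangle_{\Omega}$ is independent of $t$. To fix its value, I would invoke the continuity $\boldsymbol{\varphi}\in C([0,+\infty);\mathcal{L}^{2})$ recorded in Remark \ref{V-conti}, which ensures both that $t\mapsto\int_{\Omega}\varphi(t)\,\mathrm{d}x$ is continuous and that the initial datum $\varphi|_{t=0}=\varphi_{0}$ is attained; evaluating at $t=0$ then yields $\langle\varphi(t)\rangle_{\Omega}=\langle\varphi_{0}\rangle_{\Omega}=m_{0}$ for all $t\geq0$.

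The surface mass conservation follows by an entirely analogous argument applied to \eqref{eq2.10}: testing with $z_{\Gamma}\equiv 1\in V_{\Gamma}$ eliminates $\int_{\Gamma}\nabla_{\Gamma}\theta\cdot\nabla_{\Gamma}z_{\Gamma}\,\mathrm{d}S$, giving $\langle\partial_{t}\psi(t),1\rangle_{V_{\Gamma}',V_{\Gamma}}=0$, and since $\psi\in H^{1}(0,T;V_{\Gamma}')$ with $\psi\in C([0,+\infty);H_{\Gamma})$, one concludes $\langle\psi(t)\rangle_{\Gamma}=\langle\psi_{0}\rangle_{\Gamma}=m_{\Gamma0}$. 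As $T>0$ is arbitrary, both identities hold for all $t\geq0$.

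There is no genuine obstacle here; the only points requiring care are the standard facts that for $u\in H^{1}(0,T;V')$ and fixed $w\in V$ the scalar function $t\mapsto\langle u(t),w\rangle_{V',V}$ is absolutely continuous with derivative $\langle\partial_{t}u(t),w\rangle_{V',V}$, and the compatibility of the $V'$–$V$ pairing with the $L^{2}$ inner product on the dense subspace $H$. Both are routine consequences of the Bochner--Sobolev theory and the Gelfand triple structure, so the argument is essentially a direct computation once the weak formulation is in hand.
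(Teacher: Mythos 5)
Your proof is correct and follows exactly the paper's argument: testing the weak formulations \eqref{eq2.7} and \eqref{eq2.10} with the constant functions $z=1$ and $z_{\Gamma}=1$ to eliminate the gradient terms, then concluding that the means are constant in time and equal to their initial values. The additional details you supply (absolute continuity of $t\mapsto\langle\varphi(t),1\rangle_{V',V}$, the Gelfand triple identification, and attainment of the initial data via Remark \ref{V-conti}) are precisely the routine facts the paper leaves implicit.
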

\begin{proof}
	Taking $z=1$, $z_{\Gamma}=1$ in the weak formulations \eqref{eq2.7} and \eqref{eq2.10}, respectively, we easily arrive at the conclusion \eqref{massconser}.
\end{proof}

\begin{lemma}
	\label{ener}
Let $(\boldsymbol{\varphi},\boldsymbol{\mu})$ be the global weak solution obtained in Proposition \ref{weakexist}. It holds
\begin{align}
	E\big(\boldsymbol{\varphi}(t)\big)+\int_{0}^{t}\Big(\|\partial_t\varphi(s)\|_{V_{0}^\ast}^{2}
	+\|\partial_t\psi(s)\|_{V_{\Gamma,0}^\ast}^{2}\Big)\,\mathrm{d}s
	\leq E(\boldsymbol{\vp}_{0}),\quad\text{for a.a. }t\geq0.\label{energyin}
\end{align}
Moreover, there exists a positive constant $M_{1}$ such that
\begin{align}
&\|\boldsymbol{\varphi}\|_{L^{\infty}(0,+\infty;\mathcal{H}_{K}^{1})}
+\int_{0}^{+\infty}\Big(\|\partial_t\varphi(t)\|_{V_{0}^{\ast}}^{2}
+\|\partial_t\psi(t)\|_{V_{\Gamma,0}^{\ast}}^{2}\Big)\,\mathrm{d}t
\leq M_{1},\label{eq3.7}\\
&\int_{0}^{+\infty}\Big(\|\nabla\mu(t)\|_{H}^{2}
+\|\nabla_{\Gamma}\theta(t)\|_{H_{\Gamma}}^{2}\Big)\,\mathrm{d}t\leq M_{1}.\label{eq3.7'}
\end{align}
\end{lemma}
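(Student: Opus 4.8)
The plan is to obtain \eqref{energyin} by passing to the limit in an energy \emph{identity} satisfied at the level of the regularized problem \eqref{appro1}, and then to read off the uniform bounds \eqref{eq3.7}--\eqref{eq3.7'} from \eqref{energyin} together with the pointwise constraint $|\varphi|,|\psi|\le 1$ and the mass conservation \eqref{massconser}. The starting observation is that, since $\partial_t\varphi=\Delta\mu$ with $\partial_{\mathbf{n}}\mu=0$, the weak formulation \eqref{eq2.7} combined with the definition of $\mathcal{N}_{\Omega}$ yields $\mathcal{N}_{\Omega}(\partial_t\varphi)=-\mathbf{P}_{\Omega}\mu$, whence the identity $\|\partial_t\varphi\|_{V_{0}^\ast}^{2}=\|\nabla\mu\|_{H}^{2}$; likewise, from \eqref{eq2.10}, $\|\partial_t\psi\|_{V_{\Gamma,0}^\ast}^{2}=\|\nabla_{\Gamma}\theta\|_{H_{\Gamma}}^{2}$. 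This is precisely what makes \eqref{eq3.7} and \eqref{eq3.7'} two formulations of the same bound, and it lets me phrase the dissipation interchangeably through the chemical potentials or through the time derivatives.

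For the regularized solution $(\boldsymbol{\varphi}_{\varepsilon,\sigma},\boldsymbol{\mu}_{\varepsilon,\sigma})$, which is regular enough to justify the computations, I would test \eqref{appro1}$_1$ by $\mu_{\varepsilon,\sigma}$, \eqref{appro1}$_2$ by $\partial_t\varphi_{\varepsilon,\sigma}$, \eqref{appro1}$_5$ by $\theta_{\varepsilon,\sigma}$ and \eqref{appro1}$_6$ by $\partial_t\psi_{\varepsilon,\sigma}$, and add the four resulting identities. Integrating by parts in the bulk produces the term $-\int_{\Gamma}\partial_{\mathbf{n}}\varphi_{\varepsilon,\sigma}\,\partial_t\varphi_{\varepsilon,\sigma}\,\mathrm{d}S$, which combines with the term $\int_{\Gamma}\partial_{\mathbf{n}}\varphi_{\varepsilon,\sigma}\,\partial_t\psi_{\varepsilon,\sigma}\,\mathrm{d}S$ coming from the surface chemical potential. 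For $K=0$ the transmission condition $\varphi_{\varepsilon,\sigma}|_{\Gamma}=\psi_{\varepsilon,\sigma}$ makes these cancel, whereas for $K\in(0,+\infty)$ the Robin condition $K\partial_{\mathbf{n}}\varphi_{\varepsilon,\sigma}=\psi_{\varepsilon,\sigma}-\varphi_{\varepsilon,\sigma}$ turns their sum into $\tfrac{\chi(K)}{2}\tfrac{\mathrm{d}}{\mathrm{d}t}\int_{\Gamma}|\psi_{\varepsilon,\sigma}-\varphi_{\varepsilon,\sigma}|^{2}\,\mathrm{d}S$, exactly the penalty part of the energy. This gives
\begin{align}
\frac{\mathrm{d}}{\mathrm{d}t}E_{\varepsilon}(\boldsymbol{\varphi}_{\varepsilon,\sigma})
+\sigma\|\partial_t\varphi_{\varepsilon,\sigma}\|_{H}^{2}
+\sigma\|\partial_t\psi_{\varepsilon,\sigma}\|_{H_{\Gamma}}^{2}
+\|\nabla\mu_{\varepsilon,\sigma}\|_{H}^{2}
+\|\nabla_{\Gamma}\theta_{\varepsilon,\sigma}\|_{H_{\Gamma}}^{2}=0,\notag
\end{align}
where $E_{\varepsilon}$ denotes the energy with $\widehat{\beta}$, $\widehat{\beta}_{\Gamma}$ replaced by their Moreau--Yosida regularizations $\widehat{\beta}_{\varepsilon}$, $\widehat{\beta}_{\Gamma,\varepsilon}$. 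Integrating over $(0,t)$ yields the regularized energy balance.

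The main obstacle is the passage to the limit $(\varepsilon,\sigma)\to(0,0)$. On the right-hand side I need $E_{\varepsilon}(\boldsymbol{\varphi}_{0})\to E(\boldsymbol{\varphi}_{0})$, which follows from $0\le\widehat{\beta}_{\varepsilon}\le\widehat{\beta}$, the monotone pointwise convergence $\widehat{\beta}_{\varepsilon}\uparrow\widehat{\beta}$, and $\widehat{\beta}(\varphi_{0})\in L^{1}(\Omega)$ from $\mathbf{(A4)}$ (and analogously on $\Gamma$). On the left-hand side I would invoke the convergences recorded in the proof of Proposition \ref{weakexist}: the two nonnegative viscous terms are simply discarded; the dissipation integrals are handled by weak lower semicontinuity of the $L^{2}(0,t;\cdot)$ norm under $\nabla\mu_{\varepsilon,\sigma}\rightharpoonup\nabla\mu$ and $\nabla_{\Gamma}\theta_{\varepsilon,\sigma}\rightharpoonup\nabla_{\Gamma}\theta$; the Dirichlet and penalty parts of $E_{\varepsilon}(\boldsymbol{\varphi}_{\varepsilon,\sigma}(t))$ are controlled by weak lower semicontinuity of the norms, using $\boldsymbol{\varphi}\in C_{w}([0,+\infty);\mathcal{H}_{K}^{1})$ from Remark \ref{V-conti}; and the delicate potential terms are treated by the standard estimate $\widehat{\beta}_{\varepsilon}(r)\ge\widehat{\beta}_{\varepsilon'}(r)$ for $\varepsilon\le\varepsilon'$ together with the a.e. convergence $\varphi_{\varepsilon,\sigma}(t)\to\varphi(t)$, Fatou's lemma, and a final $\varepsilon'\to 0$. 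This produces the inequality \eqref{energyin} for a.a. $t\ge 0$, the loss of equality being exactly the price of lower semicontinuity; the dissipation is then rewritten through $\|\partial_t\varphi\|_{V_{0}^\ast}^{2}$ and $\|\partial_t\psi\|_{V_{\Gamma,0}^\ast}^{2}$ via the identities of the first paragraph.

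Finally, the uniform bounds are extracted from \eqref{energyin}. Since $\widehat{\beta},\widehat{\beta}_{\Gamma}\ge 0$, $|\varphi|,|\psi|\le 1$ a.e. (forced by the singular structure in $\mathbf{(A1)}$ and guaranteed by the construction), and $\widehat{\pi},\widehat{\pi}_{\Gamma}$ are continuous hence bounded on $[-1,1]$, the energy $E$ is bounded below by a constant depending only on $\Omega$, $\Gamma$ and the potentials; combined with $E(\boldsymbol{\varphi}(t))\le E(\boldsymbol{\varphi}_{0})$ this controls $\|\nabla\varphi(t)\|_{H}$, $\|\nabla_{\Gamma}\psi(t)\|_{H_{\Gamma}}$ and, for $K>0$, the penalty term uniformly in $t$. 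The pointwise bound supplies the $L^{2}$ part, so together with the Poincar\'e--Wirtinger inequalities \eqref{bulkpoin}--\eqref{surfacepoin} and the mass conservation \eqref{massconser} one obtains $\|\boldsymbol{\varphi}\|_{L^{\infty}(0,+\infty;\mathcal{H}_{K}^{1})}\le M_{1}$. The dissipation integral in \eqref{energyin} is bounded by $E(\boldsymbol{\varphi}_{0})$ minus the uniform lower bound of $E$, which gives the time-integral estimate in \eqref{eq3.7} upon letting $t\to+\infty$, while the equivalent bound \eqref{eq3.7'} follows at once from the norm identities $\|\partial_t\varphi\|_{V_{0}^\ast}=\|\nabla\mu\|_{H}$ and $\|\partial_t\psi\|_{V_{\Gamma,0}^\ast}=\|\nabla_{\Gamma}\theta\|_{H_{\Gamma}}$ recorded at the outset.
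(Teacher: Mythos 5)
Your proposal is correct and follows essentially the same route as the paper: an energy identity for the viscous/Moreau--Yosida regularized problem \eqref{appro1}, passage to the limit $(\varepsilon,\sigma)\to(0,0)$ via weak lower semicontinuity and $E_\varepsilon(\boldsymbol{\varphi}_0)\to E(\boldsymbol{\varphi}_0)$ (yielding the inequality \eqref{energyin}), and then the uniform bounds from the lower bound on $E$, the Poincar\'e--Wirtinger inequalities and mass conservation, with \eqref{eq3.7'} read off from $\|\partial_t\varphi\|_{V_0^\ast}=\|\nabla\mu\|_{H}$ and $\|\partial_t\psi\|_{V_{\Gamma,0}^\ast}=\|\nabla_\Gamma\theta\|_{H_\Gamma}$. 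The only cosmetic difference is that you derive the regularized identity by testing all four equations, so the dissipation appears as $\|\nabla\mu_{\varepsilon,\sigma}\|_{H}^{2}+\|\nabla_\Gamma\theta_{\varepsilon,\sigma}\|_{H_\Gamma}^{2}$, whereas the paper tests only the chemical-potential equations by $\partial_t\varphi_{\varepsilon,\sigma}$, $\partial_t\psi_{\varepsilon,\sigma}$ and writes the same dissipation as $\|\partial_t\varphi_{\varepsilon,\sigma}\|_{V_0^\ast}^{2}+\|\partial_t\psi_{\varepsilon,\sigma}\|_{V_{\Gamma,0}^\ast}^{2}$ --- identical quantities by the very identity you record at the outset.
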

\begin{proof}
We first consider the approximating system \eqref{appro1} with $\varepsilon=\sigma$. Testing \eqref{appro1}$_{2}$, \eqref{appro1}$_{6}$ by $\partial_t\varphi_{\varepsilon,\sigma}\in L^{2}(0,+\infty;H)$ and $\partial_t\psi_{\varepsilon,\sigma}\in L^{2}(0,+\infty;H_{\Gamma})$, using the chain rule in \cite[Proposition A.1]{CKSS}, we find that $t\mapsto E_{\varepsilon}\big(\boldsymbol{\varphi}_{\varepsilon,\sigma}(t)\big)$ is absolutely continuous on $[0,+\infty)$ and
\begin{align}
	\frac{\mathrm{d}}{\mathrm{d}t}E_{\varepsilon}\big(\boldsymbol{\varphi}_{\varepsilon,\sigma}(t)\big)
	+ \sigma\Vert\partial_t\boldsymbol{\varphi}_{\varepsilon,\sigma}(t)\Vert_{\mathcal{L}^{2}}^{2} +\Vert\partial_t\varphi_{\varepsilon,\sigma}(t)\Vert_{V_{0}^\ast}^{2}
	+\|\partial_t\psi_{\varepsilon,\sigma}(t)\|_{V_{\Gamma,0}^\ast}^{2} =0, \quad\text{for a.a. }  t>0,
	\label{L1}
\end{align}
where
\begin{align}
	E_{\varepsilon}(\boldsymbol{y})&=\frac{1}{2}\int_{\Omega}|\nabla y|^{2}\,\mathrm{d}x +\frac{1}{2}\int_{\Gamma}|\nabla_{\Gamma}y_{\Gamma}|^{2}\,\mathrm{d}S
	+\frac{\chi(K)}{2}\int_{\Gamma}|y_{\Gamma}-y|^{2}\,\mathrm{d}S\notag\\ &\quad+\int_{\Omega}\widehat{\beta}_{\varepsilon}(y)+\widehat{\pi}(y)\,\mathrm{d}x
	+\int_{\Gamma}\widehat{\beta}_{\Gamma,\varepsilon}(y_{\Gamma}) +\widehat{\pi}_{\Gamma}(y_{\Gamma})\,\mathrm{d}S,
	\quad\forall\,\boldsymbol{y}\in \mathcal{H}_{K}^{1}.
	\notag
\end{align}
Integrating \eqref{L1} with respective to time, we find for all $t\geq 0$,
\begin{align}
	E_{\varepsilon}\big(\boldsymbol{\varphi}_{\varepsilon,\sigma}(t)\big) +
	\sigma\int_{0}^{t}\Vert\partial_t\boldsymbol{\varphi}_{\varepsilon,\sigma}(s)\Vert_{\mathcal{L}^{2}}^{2}\,\mathrm{d}s +\int_{0}^{t}\Big(\Vert\partial_t\varphi_{\varepsilon,\sigma}(s)\Vert_{V_{0}^*}^{2}
	+\|\partial_t\psi_{\varepsilon,\sigma}(s)\|_{V_{\Gamma,0}^\ast}^{2}\Big)\,\mathrm{d}s  =E_{\varepsilon}(\boldsymbol{\varphi}_{0}).
\label{eq4.1}
\end{align}
Recalling the weak and strong convergence results as $\varepsilon=\sigma\to0$ for any $T>0$ (in the sense of subsequence, cf. \cite{CFW,CKSS})
\begin{align*}
	\boldsymbol{\varphi}_{\varepsilon,\sigma}\rightarrow\boldsymbol{\varphi}&\qquad\text{weakly star in }L^{\infty}(0,T;\mathcal{H}_{K}^{1}),\\
	\boldsymbol{\varphi}_{\varepsilon,\sigma}\rightarrow \boldsymbol{\varphi}&\qquad\text{weakly in } L^{2}(0,T;\mathcal{H}_{K}^{2}),\\	\partial_t\varphi_{\varepsilon,\sigma}\to\partial_t\varphi&\qquad\text{weakly in }L^{2}(0,T;V_{0}^{\ast}),\\
	\partial_t\psi_{\varepsilon,\sigma}\to\partial_t\psi&\qquad\text{weakly in }L^{2}(0,T;V_{\Gamma,0}^{\ast})\\
	\boldsymbol{\varphi}_{\varepsilon,\sigma}\rightarrow \boldsymbol{\varphi}&\qquad\text{strongly in }C([0,T];\mathcal{L}^{2})\cap L^{2}(0,T;\mathcal{H}_{K}^{1}),\\
	\sigma\boldsymbol{\varphi}_{\varepsilon,\sigma}\rightarrow\boldsymbol{0} &\qquad\text{strongly in }H^{1}(0,T;\mathcal{L}^{2}),
\end{align*}
and by the lower weak semicontinuity of norms, we get
\begin{align*}
	\liminf_{\varepsilon\rightarrow0}\,\int_{0}^{t}\Big(\Vert\partial_t\varphi_{\varepsilon,\sigma}(s) \Vert_{V_{0}^\ast}^{2}
	+\|\partial_t\psi_{\varepsilon,\sigma}(s)\|_{V_{\Gamma,0}^\ast}^{2}\Big)\,\mathrm{d}s \geq\int_{0}^{t}\Big(\Vert\partial_t\varphi(s)\Vert_{V_{0}^\ast}^{2} +\|\partial_t\psi(s)\|_{V_{\Gamma,0}^\ast}^{2}\Big)\,\mathrm{d}s, \end{align*}
 for almost all $t\geq0$.
Besides, it is straightforward to check that
\begin{align*}
	&\liminf_{\varepsilon\rightarrow0}E_{\varepsilon}\big(\boldsymbol{\varphi}_{\varepsilon,\sigma}(t)\big) \geq  E\big(\boldsymbol{\varphi}(t)\big)\quad\text{for a.a. }t\geq0
	\quad\text{and}\quad
	\lim_{\varepsilon\rightarrow0} E_{\varepsilon}(\boldsymbol{\varphi}_0)= E(\boldsymbol{\varphi}_0).
\end{align*}
Taking lim inf as $\varepsilon=\sigma\rightarrow0$ in \eqref{eq4.1}, we arrive at the energy
inequality \eqref{energyin}.

Thanks to $\mathbf{(A1)}$, $\mathbf{(A3)}$, we can find a nonnegative constant $c_{1}$ such that
\begin{align}
	\widehat{\beta}(r)+\widehat{\pi}(r)\geq-c_{1}, \quad\widehat{\beta}_{\Gamma}(r)+\widehat{\pi}_{\Gamma}(r)\geq-c_{1}, \quad\forall\,r\in[-1,1].\notag
\end{align}
Hence, by the definition of $E(\boldsymbol{\varphi})$, it holds
\begin{align}
	E\big(\boldsymbol{\varphi}(t)\big)\geq\frac{1}{2}\int_{\Omega}|\nabla\varphi(t)|^{2}\,\mathrm{d}x
	+\frac{1}{2}\int_{\Gamma}|\nabla_{\Gamma}\psi(t)|^{2}\,\mathrm{d}S -c_{1}\big(|\Omega|+|\Gamma|\big),\quad\text{for a.a. }t\geq0.
	 \label{e-lower}
\end{align}
Combining it with \eqref{energyin}, recalling the Poincar\'e-Wirtinger inequalities \eqref{bulkpoin}, \eqref{surfacepoin}, we get
\begin{align*}
\int_{0}^{t}\Big(\|\partial_t\varphi(s)\|_{V_{0}^\ast}^{2} +\|\partial_t\psi(s)\|_{V_{\Gamma,0}^\ast}^{2}\Big)\,\mathrm{d}s\leq E(\boldsymbol{\varphi}_{0})+c_{1}\big(|\Omega|+|\Gamma|\big)
\end{align*}
and
\begin{align*}
	\Vert\boldsymbol{\varphi}(t)\Vert_{\mathcal{H}_{K}^{1}}^{2}&\leq 2(C_{p}^{2}+1)\|\varphi(t)-m_{0}\|_{V_{0}}^{2}+2|\Omega||m_{0}|^{2}\\
	&\quad+2(C_{p}^{2}+1)\|\psi(t)-m_{\Gamma0}\|_{V_{\Gamma,0}}^{2}+2|\Gamma||m_{\Gamma0}|^{2}\\
	&\leq4(C_{p}^{2}+1)E(\boldsymbol{\varphi}_{0})+2(C_{p}^{2}+1)c_{1}\big(|\Omega|+|\Gamma|\big)
	+2|\Omega||m_{0}|^{2}+2|\Gamma||m_{\Gamma0}|^{2}
\end{align*}
for almost all $t\geq0$. Since the right-hand side is independent of $t$, then using the Lebesgue monotone convergence theory, we obtain \eqref{eq3.7}. Finally, by the definition of operators $\mathcal{N}_{\Omega}$,  $\mathcal{N}_{\Gamma}$ and \eqref{eq2.7}, \eqref{eq2.10}, we have
\begin{align}
	&\mathcal{N}_{\Omega}\big(\partial_t\varphi(t)\big) =-\mathbf{P}_{\Omega}\big(\mu(t)\big)\quad\text{in } V_{0}, \notag\\
	&\mathcal{N}_{\Gamma}\big(\partial_t\psi(t)\big) =-\mathbf{P}_{\Gamma}\big(\theta(t)\big)\quad\text{in } V_{\Gamma,0},\notag 
\end{align}
for almost all $t\geq0$, which together with \eqref{eq3.7} allows us to conclude \eqref{eq3.7'}.
\end{proof}

\begin{lemma}
\label{time-deri}
Let $\boldsymbol{\varphi}$ be the global weak solution obtained in Proposition \ref{weakexist}. For any given $\tau>0$, there exists a positive constant $M_{2}$ such that
\begin{align}
\Vert\partial_t\varphi\Vert_{L^{\infty}(\tau,+\infty;V_{0}^\ast)} +\Vert\partial_t\psi\Vert_{L^{\infty}(\tau,+\infty;V_{\Gamma,0}^\ast)}
+\int_{t}^{t+1}\|\partial_t\boldsymbol{\varphi}(s)\|^{2}_{\mathcal{H}^{1}}\mathrm{d}s\leq M_{2},\quad\forall\, t\geq\tau.
\label{eq3.8}
\end{align}
\end{lemma}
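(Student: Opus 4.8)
The plan is to derive a differential inequality for the quantity $y(t):=\|\partial_t\varphi(t)\|_{V_0^*}^2+\|\partial_t\psi(t)\|_{V_{\Gamma,0}^*}^2$ and then combine it with the global‑in‑time integrability already recorded in Lemma \ref{ener} via a uniform Gronwall argument. Since the manipulations below rest on a \emph{formal} differentiation of \eqref{model} in time, I would carry them out rigorously on the regularized problem \eqref{appro1}, where the viscous terms and the Yosida approximations $\beta_\varepsilon$, $\beta_{\Gamma,\varepsilon}$ provide the required regularity (in particular $\partial_t\boldsymbol{\varphi}_{\varepsilon,\sigma}\in H^1(0,T;\mathcal{L}^2)$ and all nonlinearities are globally Lipschitz), and only pass to the limit $\varepsilon=\sigma\to0$ at the very end through weak lower semicontinuity. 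Writing $v:=\partial_t\varphi$, $v_\Gamma:=\partial_t\psi$, I first note that mass conservation \eqref{massconser} forces $\langle v\rangle_\Omega=\langle v_\Gamma\rangle_\Gamma=0$, so that $\mathcal{N}_\Omega v$ and $\mathcal{N}_\Gamma v_\Gamma$ are well defined.

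Differentiating the balance laws \eqref{eq2.7}, \eqref{eq2.10} in time and testing them with $z=\mathcal{N}_\Omega v$ and $z_\Gamma=\mathcal{N}_\Gamma v_\Gamma$, the chain rules recalled in Section 2 turn the leading contributions into $\tfrac12\tfrac{\mathrm{d}}{\mathrm{d}t}\|v\|_{V_0^*}^2$ and $\tfrac12\tfrac{\mathrm{d}}{\mathrm{d}t}\|v_\Gamma\|_{V_{\Gamma,0}^*}^2$. Using the definitions of $\mathcal{N}_\Omega$, $\mathcal{N}_\Gamma$, the remaining terms become $\int_\Omega v\,\partial_t\mu\,\mathrm{d}x$ and $\int_\Gamma v_\Gamma\,\partial_t\theta\,\mathrm{d}S$, into which I substitute $\partial_t\mu=-\Delta v+\beta'(\varphi)v+\pi'(\varphi)v$ and $\partial_t\theta=\partial_{\mathbf{n}}v-\Delta_\Gamma v_\Gamma+\beta_\Gamma'(\psi)v_\Gamma+\pi_\Gamma'(\psi)v_\Gamma$. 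After integrating by parts, the crucial point is the treatment of the coupled boundary contributions $-\int_\Gamma v\,\partial_{\mathbf{n}}v\,\mathrm{d}S$ and $\int_\Gamma v_\Gamma\,\partial_{\mathbf{n}}v\,\mathrm{d}S$: using the time‑differentiated transmission condition ($K\partial_{\mathbf{n}}v=v_\Gamma-v|_\Gamma$ when $K>0$, and $v|_\Gamma=v_\Gamma$ when $K=0$), these sum to the nonnegative quantity $\tfrac1K\|v|_\Gamma-v_\Gamma\|_{H_\Gamma}^2$ for $K>0$ and cancel identically for $K=0$. Thus the whole boundary coupling is dissipative, and the argument proceeds uniformly in $K\in[0,+\infty)$, the degenerate case being accommodated by the trace space $\mathcal{V}^1$.

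Summing the two identities and invoking $\beta'\ge\varpi$, $\beta_\Gamma'\ge\varpi$ from $\mathbf{(A1)}$, I obtain on the left a dissipation comprising $\|\nabla v\|_H^2+\|\nabla_\Gamma v_\Gamma\|_{H_\Gamma}^2+\varpi(\|v\|_H^2+\|v_\Gamma\|_{H_\Gamma}^2)$, while the concave perturbations contribute at most $\gamma_1\|v\|_H^2+\gamma_2\|v_\Gamma\|_{H_\Gamma}^2$ on the right by $\mathbf{(A3)}$. To close the estimate I use the interpolation $\|v\|_H^2\le\|v\|_{V_0^*}\|v\|_{V_0}$ (valid for zero‑mean $v$, together with its surface analogue) and Young's inequality to split these terms into a small multiple of the gradient dissipation plus a multiple of $y$. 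This yields
\begin{align}
\frac{\mathrm{d}}{\mathrm{d}t}y(t)+c\,\|\partial_t\boldsymbol{\varphi}(t)\|_{\mathcal{H}^1}^2\le C\,y(t),\qquad\text{for a.a. }t>0,\notag
\end{align}
for suitable constants $c,C>0$ independent of $\varepsilon,\sigma$.

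Finally, I would invoke the uniform Gronwall lemma. Since Lemma \ref{ener} gives $\int_0^{+\infty}y(t)\,\mathrm{d}t\le M_1$, applying the inequality with window length $\tau$ yields $y(t)\le\tfrac{M_1}{\tau}e^{C\tau}$ for all $t\ge\tau$, which is precisely the claimed $L^{\infty}(\tau,+\infty;V_0^*)$ and $L^{\infty}(\tau,+\infty;V_{\Gamma,0}^*)$ bounds. Integrating the differential inequality over $[t,t+1]$ and combining the pointwise bound just obtained with $\int_t^{t+1}y\,\mathrm{d}s\le M_1$ then controls $c\int_t^{t+1}\|\partial_t\boldsymbol{\varphi}\|_{\mathcal{H}^1}^2\,\mathrm{d}s$, giving the last term. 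I expect the main difficulty to lie not in the algebra but in the rigorous justification of the time differentiation: one must run the whole computation on the approximating problem \eqref{appro1} (or, alternatively, on time difference quotients), keep every constant independent of $\varepsilon,\sigma$ and of the initial layer near $t=0$ (which is why the bound holds only for $t\ge\tau$), and only then pass to the limit, ensuring throughout that the $1/K$ boundary term is treated in a way that remains meaningful in the degenerate limit $K=0$.
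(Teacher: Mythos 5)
Your proposal is correct and follows essentially the same route the paper intends: the paper omits the details, citing the ``standard approximating procedure using \eqref{appro1}'' from \cite{AW,LW}, which is precisely the argument you carry out (time differentiation of the system, testing with $\mathcal{N}_{\Omega}\partial_t\varphi$, $\mathcal{N}_{\Gamma}\partial_t\psi$, interpolation to absorb the $\pi'$, $\pi_{\Gamma}'$ terms, and the uniform Gronwall lemma combined with \eqref{eq3.7}). Your explicit observation that the coupled boundary contributions reduce to the nonnegative quantity $K\int_\Gamma|\partial_{\mathbf{n}}\partial_t\varphi|^2\,\mathrm{d}S$ (vanishing when $K=0$) is exactly what substantiates the paper's remark that ``the value of $K\in[0,+\infty)$ will not play a role in the proof.''
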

\begin{proof}
Since the value of $K\in[0,+\infty)$ will not play a role in the proof, the results follows a standard approximating procedure using \eqref{appro1} (cf. \cite{AW,LW}). The details are omitted.
\end{proof}

\begin{proposition}[Energy equality]
	Let $\boldsymbol{\vp}$ be the global weak solution obtained in Proposition \ref{weakexist}. We have
	\begin{align}
		\frac{\mathrm{d}}{\mathrm{d}t}E\big(\boldsymbol{\varphi}(t)\big)
        +\Vert\partial_t\varphi(t)\Vert_{V_{0}^*}^{2}
		+\Vert\partial_t\psi(t)\Vert_{V_{\Gamma,0}^*}^{2}=0,
    \quad\text{for a.a. }t>0,\label{4.1}
	\end{align}
	and
	\begin{align}
		E\big(\boldsymbol{\varphi}(t)\big)
        +\int_{0}^{t}\Big(\Vert\partial_t\varphi(s)\Vert_{V_{0}^*}^{2}
		+\Vert\partial_t\psi(s)\Vert_{V_{\Gamma,0}^*}^{2}\Big)\,\mathrm{d}s =E(\boldsymbol{\varphi}_{0}),
        \quad\forall\,t\geq0.
\label{eq4.1b}
	\end{align}
\end{proposition}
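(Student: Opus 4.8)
The plan is to reduce both \eqref{4.1} and \eqref{eq4.1b} to the single chain-rule identity
\[
\frac{\mathrm{d}}{\mathrm{d}t}E\big(\boldsymbol{\varphi}(t)\big) = \langle\partial_t\varphi(t),\mu(t)\rangle_{V',V} + \langle\partial_t\psi(t),\theta(t)\rangle_{V_\Gamma',V_\Gamma}, \qquad\text{for a.a. }t>0,
\]
from which the proposition follows at once. Indeed, since $\langle\varphi(t)\rangle_\Omega\equiv m_0$ and $\langle\psi(t)\rangle_\Gamma\equiv m_{\Gamma0}$ give $\partial_t\varphi\in V_0^\ast$, $\partial_t\psi\in V_{\Gamma,0}^\ast$, testing \eqref{eq2.7} with $z=\mu(t)\in V$ and \eqref{eq2.10} with $z_\Gamma=\theta(t)\in V_\Gamma$ yields $\langle\partial_t\varphi,\mu\rangle = -\|\nabla\mu\|_H^2$ and $\langle\partial_t\psi,\theta\rangle = -\|\nabla_\Gamma\theta\|_{H_\Gamma}^2$. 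Combining this with the identities $\mathcal{N}_\Omega(\partial_t\varphi)=-\mathbf{P}_\Omega\mu$, $\mathcal{N}_\Gamma(\partial_t\psi)=-\mathbf{P}_\Gamma\theta$ from Lemma \ref{ener} (which force $\|\partial_t\varphi\|_{V_0^\ast}=\|\nabla\mu\|_H$ and $\|\partial_t\psi\|_{V_{\Gamma,0}^\ast}=\|\nabla_\Gamma\theta\|_{H_\Gamma}$) gives \eqref{4.1}. Since the chain rule also makes $t\mapsto E(\boldsymbol\varphi(t))$ absolutely continuous, integrating \eqref{4.1} over $[0,t]$ and using $\boldsymbol\varphi(0)=\boldsymbol\varphi_0$ produces \eqref{eq4.1b} for every $t\geq0$.

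To prove the chain rule I would split $E=E_1+E_2$ according to \eqref{decom}, where $E_2(\boldsymbol y)=\int_\Omega\widehat\pi(y)\,\mathrm{d}x+\int_\Gamma\widehat\pi_\Gamma(y_\Gamma)\,\mathrm{d}S$ is the smooth part and
\[
E_1(\boldsymbol y)=\frac12\int_\Omega|\nabla y|^2\,\mathrm{d}x+\frac12\int_\Gamma|\nabla_\Gamma y_\Gamma|^2\,\mathrm{d}S+\frac{\chi(K)}2\int_\Gamma|y_\Gamma-y|^2\,\mathrm{d}S+\int_\Omega\widehat\beta(y)\,\mathrm{d}x+\int_\Gamma\widehat\beta_\Gamma(y_\Gamma)\,\mathrm{d}S
\]
is proper, convex and lower semicontinuous. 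The smooth part is harmless: by $\textbf{(A3)}$ the maps $\pi,\pi_\Gamma$ are globally Lipschitz and $\boldsymbol\varphi\in L^\infty(0,T;\mathcal{H}_K^1)$ forces $\pi(\varphi)\in L^2(0,T;V)$, $\pi_\Gamma(\psi)\in L^2(0,T;V_\Gamma)$, so a standard chain rule gives $\frac{\mathrm{d}}{\mathrm{d}t}E_2(\boldsymbol\varphi)=\langle\partial_t\varphi,\pi(\varphi)\rangle+\langle\partial_t\psi,\pi_\Gamma(\psi)\rangle$.

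For the convex part I would first identify a regular element of its subdifferential. Exploiting the $L^2(0,T;\mathcal{H}_K^2)$ regularity of $\boldsymbol\varphi$, integration by parts against any admissible increment $\boldsymbol w-\boldsymbol\varphi(t)$, with the boundary terms rewritten through \eqref{eq2.9} (for $K>0$) or the transmission condition $\varphi|_\Gamma=\psi$ (for $K=0$), together with the convexity of $\widehat\beta$, $\widehat\beta_\Gamma$, shows that $\boldsymbol\xi:=(\mu-\pi(\varphi),\theta-\pi_\Gamma(\psi))$ satisfies $E_1(\boldsymbol w)-E_1(\boldsymbol\varphi(t))\geq\langle\boldsymbol\xi(t),\boldsymbol w-\boldsymbol\varphi(t)\rangle$, i.e. $\boldsymbol\xi(t)\in\partial E_1(\boldsymbol\varphi(t))$ for a.a. $t$; here the bulk and surface boundary contributions are exactly what allow $-\Delta\varphi+\beta(\varphi)$ and $\partial_\mathbf{n}\varphi-\Delta_\Gamma\psi+\beta_\Gamma(\psi)$ to be replaced by $\mu-\pi(\varphi)$ and $\theta-\pi_\Gamma(\psi)$ via \eqref{eq2.8}, \eqref{eq2.11}. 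Crucially, although $\beta(\varphi),\beta_\Gamma(\psi)$ belong only to $L^2(0,T;H)\times L^2(0,T;H_\Gamma)$, the combinations forming $\boldsymbol\xi$ inherit the $L^2(0,T;\mathcal{H}^1)$ regularity of $(\mu,\theta)$, so that $\boldsymbol\xi$ can be paired in duality with $\partial_t\boldsymbol\varphi\in L^2(0,T;(\mathcal{V}^1)')$. An abstract chain rule for convex lower semicontinuous functionals along absolutely continuous trajectories then gives that $t\mapsto E_1(\boldsymbol\varphi(t))$ is absolutely continuous with $\frac{\mathrm{d}}{\mathrm{d}t}E_1(\boldsymbol\varphi)=\langle\partial_t\varphi,\mu-\pi(\varphi)\rangle+\langle\partial_t\psi,\theta-\pi_\Gamma(\psi)\rangle$; adding the smooth part cancels the $\pi,\pi_\Gamma$ terms and yields the chain rule.

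The main obstacle is precisely this convex chain rule: one must (i) perform the integration by parts so that the coupling and normal-derivative terms recombine correctly under both the Dirichlet transmission ($K=0$) and the Robin ($K>0$) conditions, and (ii) apply a chain rule in a duality rather than a Hilbert setting, since the pairing is between $\partial_t\boldsymbol\varphi\in(\mathcal{V}^1)'$ and $\boldsymbol\xi\in\mathcal{V}^1$ — this is why the singular terms must be kept together with the Dirichlet energies inside the \emph{single} convex functional $E_1$, rather than differentiated separately. A more computational alternative that avoids the abstract statement is to retain equalities in the proof of Lemma \ref{ener}: the viscous approximation \eqref{appro1} already satisfies the exact energy identity \eqref{eq4.1}, and upgrading the convergence $\boldsymbol\varphi_{\varepsilon,\sigma}\to\boldsymbol\varphi$ to a strong convergence forcing $E_\varepsilon(\boldsymbol\varphi_{\varepsilon,\sigma}(t))\to E(\boldsymbol\varphi(t))$ pointwise in $t$ (rather than only $\liminf\geq$) would turn \eqref{energyin} directly into \eqref{eq4.1b}; there the difficulty is merely transferred to establishing the requisite pointwise convergence of the energy.
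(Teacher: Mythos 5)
Your proposal is correct, but it takes a genuinely different route from the paper's proof. The paper never invokes a duality-form chain rule: it tests \eqref{eq2.7} and \eqref{eq2.10} with $z=\mathcal{N}_{\Omega}(\partial_t\varphi)$ and $z_{\Gamma}=\mathcal{N}_{\Gamma}(\partial_t\psi)$ and works on $[\tau,+\infty)$ for arbitrary $\tau>0$, where Lemma \ref{time-deri} provides $\partial_t\boldsymbol{\varphi}\in L^{2}_{\mathrm{uloc}}(\tau,+\infty;\mathcal{H}^{1})$; on that interval every pairing is a genuine $\mathcal{L}^{2}$ inner product, so only the classical Hilbert-space chain rule for subdifferentials is needed, and \eqref{4.1} follows for a.a. $t\geq\tau$, hence for a.a. $t>0$. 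The price of that shortcut is the endpoint $t=0$: the paper integrates on $[s,t]$ with $s\geq\tau$ (see \eqref{positive}) and then proves $\lim_{s\to0}E(\boldsymbol{\varphi}(s))=E(\boldsymbol{\varphi}_{0})$ by playing the approximation-based energy \emph{inequality} \eqref{energyin} (the $\limsup$ bound) against weak continuity and lower semicontinuity (the $\liminf$ bound). Your argument dispenses with both Lemma \ref{time-deri} and this endpoint analysis by working directly on $[0,T]$: you test with $\mu$, $\theta$, identify $\boldsymbol{\xi}=(\mu-\pi(\varphi),\theta-\pi_{\Gamma}(\psi))\in L^{2}(0,T;\mathcal{H}^{1})$ as an $\mathcal{L}^{2}$-subgradient of the convex part $E_{1}$ --- your verification via the $H^{2}$ regularity, \eqref{eq2.8}, \eqref{eq2.11}, and the recombination of boundary terms through \eqref{eq2.9} (or the trace condition when $K=0$) is exactly right, as is the insistence on keeping the coupling term and the singular terms inside one convex functional --- and then invoke a chain rule pairing $\partial_t\boldsymbol{\varphi}\in L^{2}(0,T;(\mathcal{H}^{1})')$ with $\boldsymbol{\xi}$. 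This yields absolute continuity of $E(\boldsymbol{\varphi}(\cdot))$ on all of $[0,T]$ in one stroke, so \eqref{eq4.1b} holds for every $t\geq0$ with no separate limiting argument. What your route costs is exactly the abstract ingredient you flag: the duality-form convex chain rule is invoked rather than proved, but results of this type are available (statements in the spirit of \cite[Proposition A.1]{CKSS}, which the paper itself uses for the approximating system, and related references), and your hypotheses are satisfied, so this is a matter of citation rather than a gap. One last remark: your fallback alternative (upgrading the approximation to pointwise-in-time energy convergence) is strictly harder than what the paper actually does --- the paper only ever extracts the $\liminf$ inequality from the approximation and repairs it to an equality a posteriori via \eqref{4.1} and the semicontinuity argument at $t=0$.
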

\begin{proof}
	For $s\geq\tau>0$, taking $z=\mathcal{N}_{\Omega}\big(\partial_t\varphi(s)\big)$ in \eqref{eq2.7}, we obtain
	\begin{align}
		0&=\big\langle\partial_t\varphi(s),\mathcal{N}_{\Omega}\big(\partial_t\varphi(s)\big)\big\rangle_{V_{0}^*,V_{0}}
		+\int_{\Omega}\nabla\mu(s)\cdot\nabla\mathcal{N}_{\Omega}\big(\partial_t\varphi(s)\big)\,\mathrm{d}x\notag\\
		&=\Vert\partial_t\varphi(s)\Vert^{2}_{V_{0}^*}+\int_{\Omega}\mu(s)\partial_t\varphi(s)\,\mathrm{d}x\notag\\
		&=\Vert\partial_t\varphi(s)\Vert^{2}_{V_{0}^*}+\int_{\Omega}\big(-\Delta\varphi(s)+\beta\big(\varphi(s)\big)
		+\pi\big(\varphi(s)\big)\big)\partial_t\varphi(s)\,\mathrm{d}x.\notag		
	\end{align}
	Similarly, taking $z_{\Gamma}=\mathcal{N}_{\Gamma}\big(\partial_t\psi(s)\big)$ in \eqref{eq2.10}, we obtain
	\begin{align}
	0&=\big\langle\partial_{t}\psi(s),\mathcal{N}_{\Gamma} \big(\partial_t\psi(s)\big)\big\rangle_{V_{\Gamma,0}^*, V_{\Gamma,0}}	+\int_{\Gamma}\nabla_{\Gamma}\theta(s)\cdot\nabla_{\Gamma}\mathcal{N}_{\Gamma} \big(\partial_t\psi(s)\big)\,\mathrm{d}S \notag\\
	&=\Vert\partial_t\psi(s)\Vert^{2}_{V_{\Gamma,0}^*}
       + \int_{\Gamma}\theta(s)\partial_t\psi(s)\,\mathrm{d}S\notag\\
	&=\Vert\partial_t\psi(s)\Vert^{2}_{V_{\Gamma,0}^*}
      +\int_{\Gamma}\big(\partial_{\mathbf{n}}\varphi(s)	-\Delta_{\Gamma}\psi(s)+\beta_{\Gamma}\big(\psi(s)\big) +\pi_{\Gamma}\big(\psi(s)\big)\big)\partial_t\psi(s)\,\mathrm{d}S.\notag
	\end{align}
	Summing up the above two equalities, using the chain rule of the subdifferential (see, e.g., \cite{R.E.S}), we obtain
	\begin{align*}
		\frac{\mathrm{d}}{\mathrm{d}t}E\big(\boldsymbol{\varphi}(t)\big)+\Vert\partial_t\varphi(t)\Vert_{V_{0}^*}^{2}
		+\Vert\partial_t\psi(t)\Vert_{V_{\Gamma,0}^*}^{2}=0,\quad\text{for a.a. }t\geq\tau.
	\end{align*}
	For $s,t\geq\tau$, integrating over the interval $(s,t)$, we find
	\begin{align}
		E\big(\boldsymbol{\varphi}(t)\big)-E\big(\boldsymbol{\varphi}(s)\big)
		=-\int_{s}^{t}\big(\Vert\partial_t\varphi(\eta)\Vert_{V_{0}^*}^{2}
		+\Vert\partial_t\psi(\eta)\Vert_{V_{\Gamma,0}^*}^{2}\big)\,\mathrm{d}\eta.
       \label{positive}
	\end{align}
	Hence, for any fixed $\tau>0$, the mapping $t\mapsto E\big(\boldsymbol{\varphi}(t)\big)$ is absolutely continuous and non-increasing for all $t\geq\tau$. Since $\tau>0$ is arbitrary, the energy identity \eqref{eq4.1b} holds for almost all $t>0$. It follows from the energy inequality \eqref{energyin} that $\limsup_{s\to0}E(\boldsymbol{\varphi}(s))\leq E(\boldsymbol{\varphi}(0))$. On the other hand, from Remark \ref{V-conti}, the lower weak semicontinuity of norms and Lebesgue's dominated convergence theorem, we have $\liminf_{s\to0}E(\boldsymbol{\varphi}(s))\geq E(\boldsymbol{\varphi}(0))$. As a result, it holds $\lim_{s\to0}E(\boldsymbol{\varphi}(s))= E(\boldsymbol{\varphi}(0))$. Thus, we can pass to the limit $s\to0$ in \eqref{positive} to conclude the energy equality \eqref{eq4.1b}.
\end{proof}

\begin{remark}\rm
	Since  the mapping $t\mapsto E(\boldsymbol{\vp}(t))$ is absolutely continuous for all $t\geq0$, then applying the same argument as in \cite[Proposition 4.1]{FW}, we have $\boldsymbol{\vp}\in C([0,+\infty);\mathcal{H}_{K}^{1})$.
\end{remark}

As a consequence of \eqref{e-lower} and \eqref{eq4.1b}, we can conclude

\begin{corollary}
	\label{lowerbound}
	Let $\boldsymbol{\vp}$ be the global weak solution obtained in Proposition \ref{weakexist}. There exists some constant $E_{\infty}\in\mathbb{R}$ such that
	\begin{align*}
		\lim_{t\to+\infty}E(\boldsymbol{\vp}(t))=E_{\infty}\quad\text{and}
		\quad\lim_{t\to+\infty}\int_{t}^{t+1}\big(\|\nabla\mu(s)\|_{H}^{2}
		+\|\nabla_{\Gamma}\theta(s)\|_{H_{\Gamma}}^{2}\big)\,\mathrm{d}s=0.
	\end{align*}
\end{corollary}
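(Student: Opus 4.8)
The plan is to read off both conclusions directly from the energy equality \eqref{eq4.1b} together with the lower bound \eqref{e-lower}. First I would note that in \eqref{eq4.1b} the dissipation integral
\[
\int_{0}^{t}\Big(\Vert\partial_t\varphi(s)\Vert_{V_{0}^*}^{2}+\Vert\partial_t\psi(s)\Vert_{V_{\Gamma,0}^*}^{2}\Big)\,\mathrm{d}s
\]
is nonnegative and nondecreasing in $t$, so that $t\mapsto E(\boldsymbol{\varphi}(t))=E(\boldsymbol{\varphi}_{0})-\int_{0}^{t}(\cdots)\,\mathrm{d}s$ is non-increasing on $[0,+\infty)$. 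On the other hand, \eqref{e-lower} yields $E(\boldsymbol{\varphi}(t))\geq -c_{1}(|\Omega|+|\Gamma|)$ for almost all $t$, hence $E(\boldsymbol{\varphi}(\cdot))$ is bounded from below. A non-increasing function that is bounded below admits a finite limit, which I denote by $E_{\infty}\in\mathbb{R}$; this gives $\lim_{t\to+\infty}E(\boldsymbol{\varphi}(t))=E_{\infty}$.

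Next I would pass to the limit $t\to+\infty$ in \eqref{eq4.1b}. Since the dissipation integral is monotone in $t$ and $E(\boldsymbol{\varphi}(t))\to E_{\infty}$, the monotone convergence theorem gives
\[
\int_{0}^{+\infty}\Big(\Vert\partial_t\varphi(s)\Vert_{V_{0}^*}^{2}+\Vert\partial_t\psi(s)\Vert_{V_{\Gamma,0}^*}^{2}\Big)\,\mathrm{d}s=E(\boldsymbol{\varphi}_{0})-E_{\infty}<+\infty.
\]
Because the integrand is nonnegative and globally integrable on $(0,+\infty)$, the integral over the sliding window $(t,t+1)$ must vanish as $t\to+\infty$; indeed $\int_{t}^{t+1}(\cdots)\,\mathrm{d}s=\int_{0}^{t+1}(\cdots)\,\mathrm{d}s-\int_{0}^{t}(\cdots)\,\mathrm{d}s$, and both terms converge to the same finite value, so their difference tends to $0$.

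Finally I would translate this back into the chemical potentials. Recalling the identities $\mathcal{N}_{\Omega}(\partial_t\varphi)=-\mathbf{P}_{\Omega}(\mu)$ and $\mathcal{N}_{\Gamma}(\partial_t\psi)=-\mathbf{P}_{\Gamma}(\theta)$ established in the proof of Lemma \ref{ener}, together with the definitions of the norms $\Vert\cdot\Vert_{V_{0}^*}$, $\Vert\cdot\Vert_{V_{\Gamma,0}^*}$ and the invariance of the gradient under subtraction of the spatial mean, one obtains $\Vert\partial_t\varphi\Vert_{V_{0}^*}=\Vert\nabla\mathbf{P}_{\Omega}(\mu)\Vert_{H}=\Vert\nabla\mu\Vert_{H}$ and similarly $\Vert\partial_t\psi\Vert_{V_{\Gamma,0}^*}=\Vert\nabla_{\Gamma}\theta\Vert_{H_{\Gamma}}$ for almost all $t$. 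Substituting these equalities into the previous limit yields exactly the asserted convergence of $\int_{t}^{t+1}(\Vert\nabla\mu\Vert_{H}^{2}+\Vert\nabla_{\Gamma}\theta\Vert_{H_{\Gamma}}^{2})\,\mathrm{d}s$ to $0$. The whole argument is essentially routine; the only step requiring a little care is this last identification of the dual norms of $\partial_t\varphi$, $\partial_t\psi$ with the gradient norms of $\mu$, $\theta$, which however is already contained in Lemma \ref{ener}.
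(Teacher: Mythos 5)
Your proof is correct and is exactly the argument the paper intends: the corollary is stated in the paper as an immediate consequence of the lower bound \eqref{e-lower} and the energy equality \eqref{eq4.1b}, which is precisely the monotonicity-plus-lower-bound and vanishing-tail argument you give. Your final identification $\Vert\partial_t\varphi\Vert_{V_{0}^*}=\Vert\nabla\mu\Vert_{H}$, $\Vert\partial_t\psi\Vert_{V_{\Gamma,0}^*}=\Vert\nabla_{\Gamma}\theta\Vert_{H_{\Gamma}}$ is also the one the paper relies on (it appears in the proof of Lemma \ref{ener} when deriving \eqref{eq3.7'} and again in the proof of Theorem \ref{equilibrium}), so nothing is missing.
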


\subsection{High-order regularity}
\begin{lemma}
Let $(\boldsymbol{\varphi}, \boldsymbol{\mu})$ be the global weak solution obtained in Proposition \ref{weakexist}.
For any given $\tau>0$, there exists a positive constant $M_{3}$ such that
\begin{align}
	&\Vert\beta(\varphi)\Vert_{L^{\infty}(\tau,+\infty;L^{1}(\Omega))}
	+\Vert\beta_{\Gamma}(\psi)\Vert_{L^{\infty}(\tau,+\infty;L^{1}(\Gamma))}\leq M_{3},\label{lv4.12}\\
	&\Vert\boldsymbol{\mu}\Vert_{L^{\infty}(\tau,+\infty;\mathcal{H}^{1})}\leq M_{3}.\label{lv4.13}
\end{align}
\end{lemma}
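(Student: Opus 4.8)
The plan is to separate the bound \eqref{lv4.13} on $\boldsymbol{\mu}=(\mu,\theta)$ into its mean-free part, which is essentially free, and its mean part, which is coupled to the $L^1$-bounds \eqref{lv4.12}. First I would observe that the identities $\mathcal{N}_{\Omega}(\partial_t\varphi)=-\mathbf{P}_{\Omega}\mu$ and $\mathcal{N}_{\Gamma}(\partial_t\psi)=-\mathbf{P}_{\Gamma}\theta$, already obtained in the proof of Lemma \ref{ener}, give $\|\nabla\mu(t)\|_{H}=\|\partial_t\varphi(t)\|_{V_0^\ast}$ and $\|\nabla_\Gamma\theta(t)\|_{H_\Gamma}=\|\partial_t\psi(t)\|_{V_{\Gamma,0}^\ast}$ for a.a.\ $t$, directly from the definition of the dual norms. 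Hence Lemma \ref{time-deri} yields a uniform bound on $\nabla\mu$, $\nabla_\Gamma\theta$ in $L^\infty(\tau,+\infty;H)$, $L^\infty(\tau,+\infty;H_\Gamma)$, and by \eqref{bulkpoin}, \eqref{surfacepoin} the mean-free parts $\mathbf{P}_\Omega\mu$, $\mathbf{P}_\Gamma\theta$ are bounded in $V$, $V_\Gamma$. Testing \eqref{eq2.8} and \eqref{eq2.11} by the constant $1$ moreover produces the two mean-value relations
\begin{align*}
|\Omega|\,\langle\mu\rangle_\Omega=\int_\Omega\big(\beta(\varphi)+\pi(\varphi)\big)\,\mathrm{d}x-P,
\qquad
|\Gamma|\,\langle\theta\rangle_\Gamma=\int_\Gamma\big(\beta_\Gamma(\psi)+\pi_\Gamma(\psi)\big)\,\mathrm{d}S+P,
\end{align*}
where $P:=\int_\Gamma\partial_{\mathbf{n}}\varphi\,\mathrm{d}S=\int_\Omega\Delta\varphi\,\mathrm{d}x$. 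Thus, once \eqref{lv4.12} is established and $P$ is bounded, the means $\langle\mu\rangle_\Omega$, $\langle\theta\rangle_\Gamma$ are controlled and \eqref{lv4.13} follows.

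For the $L^1$-bounds \eqref{lv4.12} I would multiply \eqref{eq2.8} by $\varphi-m_0$, integrate over $\Omega$, multiply \eqref{eq2.11} by $\psi-m_{\Gamma0}$, integrate over $\Gamma$, and add. After integrating the Laplacians by parts and rewriting the boundary integral through the coupling condition \eqref{eq2.9} (so that $\varphi|_\Gamma-\psi=-K\partial_{\mathbf{n}}\varphi$), and using that $\varphi-m_0$, $\psi-m_{\Gamma0}$ have zero mean (so the means of $\mu$, $\theta$ drop out), one obtains
\begin{align*}
&\int_\Omega\beta(\varphi)(\varphi-m_0)\,\mathrm{d}x
+\int_\Gamma\beta_\Gamma(\psi)(\psi-m_{\Gamma0})\,\mathrm{d}S
+\|\nabla\varphi\|_H^2+\|\nabla_\Gamma\psi\|_{H_\Gamma}^2+K\|\partial_{\mathbf{n}}\varphi\|_{H_\Gamma}^2\\
&\quad=\int_\Omega\mathbf{P}_\Omega\mu\,(\varphi-m_0)\,\mathrm{d}x
+\int_\Gamma\mathbf{P}_\Gamma\theta\,(\psi-m_{\Gamma0})\,\mathrm{d}S
+(m_{\Gamma0}-m_0)P\\
&\qquad-\int_\Omega\pi(\varphi)(\varphi-m_0)\,\mathrm{d}x
-\int_\Gamma\pi_\Gamma(\psi)(\psi-m_{\Gamma0})\,\mathrm{d}S.
\end{align*}
By a Gilardi--Miranville--Schimperna type inequality, applied separately in the bulk and on the surface (legitimate since $m_0,m_{\Gamma0}\in(-1,1)$ by $\mathbf{(A4)}$), the two singular terms on the left dominate $\delta\big(\|\beta(\varphi)\|_{L^1(\Omega)}+\|\beta_\Gamma(\psi)\|_{L^1(\Gamma)}\big)-C$ for some $\delta>0$. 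Every term on the right except $(m_{\Gamma0}-m_0)P$ is bounded by a constant depending only on $M_1$, $M_2$, $\gamma_1$, $\gamma_2$, since $\mathbf{P}_\Omega\mu$, $\mathbf{P}_\Gamma\theta$ are bounded in $L^2$, while $\|\varphi-m_0\|_H$, $\|\psi-m_{\Gamma0}\|_{H_\Gamma}\le M_1$ and $\pi$, $\pi_\Gamma$ are bounded on $[-1,1]$.

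The crux is the control of the coupling term $(m_{\Gamma0}-m_0)P$, and here the two cases $K>0$ and $K=0$ genuinely differ. When $K>0$, the left-hand side carries the nonnegative term $K\|\partial_{\mathbf{n}}\varphi\|_{H_\Gamma}^2$; since $|P|\le|\Gamma|^{1/2}\|\partial_{\mathbf{n}}\varphi\|_{H_\Gamma}$, Young's inequality absorbs $(m_{\Gamma0}-m_0)P$ into it at the cost of a constant of order $|m_0-m_{\Gamma0}|^2|\Gamma|/K$. This yields \eqref{lv4.12} together with a bound on $\|\partial_{\mathbf{n}}\varphi\|_{H_\Gamma}$, hence on $P$, and the mean-value relations then close \eqref{lv4.13}. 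The delicate case is $K=0$, where the favourable term degenerates and $P$ must be controlled by other means; this is the step I expect to be the main obstacle. I would exploit the Dirichlet coupling $\varphi|_\Gamma=\psi$ together with the domination \eqref{assum1} of $\mathbf{(A2)}$, $|\beta(r)|\le\varrho|\beta_\Gamma(r)|+c_0$, which controls the boundary trace $\beta(\varphi|_\Gamma)=\beta(\psi)$ by $\beta_\Gamma(\psi)$. Running in parallel the $L^2$-type estimate obtained by testing \eqref{eq2.8}, \eqref{eq2.11} with the mean-free nonlinearities $\beta(\varphi)-\langle\beta(\varphi)\rangle_\Omega$ and $\beta_\Gamma(\psi)-\langle\beta_\Gamma(\psi)\rangle_\Gamma$ (which again removes the chemical-potential means), the boundary integral reduces, via the monotonicity of $\beta$ and \eqref{assum1}, to terms absorbable into the coercive $L^2$-norms of $\beta_\Gamma(\psi)$ and of $\partial_{\mathbf{n}}\varphi$ reconstructed from \eqref{eq2.11}. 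This provides the missing bound on $\|\partial_{\mathbf{n}}\varphi\|_{H_\Gamma}$ and on $P$, following the strategy of \cite{CFW,MW,LvWu-2}, after which \eqref{lv4.12} and \eqref{lv4.13} follow as above.
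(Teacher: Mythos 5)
Your reduction of \eqref{lv4.13} to \eqref{lv4.12} plus a bound on $P=\int_\Gamma\partial_{\mathbf{n}}\varphi\,\mathrm{d}S$ (via the two mean-value relations and the gradient bounds inherited from Lemma \ref{time-deri}) is the right skeleton, and your treatment of $K\in(0,+\infty)$ is correct: the tested identity, the Gilardi--Miranville--Schimperna lower bounds with reference means $m_0,m_{\Gamma0}\in(-1,1)$, and the Young absorption of $(m_{\Gamma0}-m_0)P$ into $\frac{K}{2}\Vert\partial_{\mathbf{n}}\varphi\Vert_{H_\Gamma}^2$ close the argument. The paper handles $K>0$ even more directly: by \eqref{eq2.9}, the trace theorem and \eqref{eq3.7}, one has $\Vert\partial_{\mathbf{n}}\varphi\Vert_{L^\infty(\tau,+\infty;H_\Gamma)}=\frac{1}{K}\Vert\psi-\varphi\Vert_{L^\infty(\tau,+\infty;H_\Gamma)}\leq C$, so $P$ is bounded \emph{before} any testing; after this observation the two arguments coincide, and both produce a $K$-dependent constant, which is all that is claimed.

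The genuine gap is the case $K=0$. The paper itself only cites \cite[Lemma 4.1]{MW} there, so deferring to the literature is consistent with the paper; but the mechanism you sketch is circular and would not supply the missing control of $P$. Reconstructing $\partial_{\mathbf{n}}\varphi$ from \eqref{eq2.11} requires $\Vert\theta\Vert_{H_\Gamma}$ (at this stage only $\nabla_\Gamma\theta$ is controlled; the mean $\langle\theta\rangle_\Gamma$ is precisely one of the unknowns), $\Vert\Delta_\Gamma\psi\Vert_{H_\Gamma}$ (the $\mathcal{H}^2$ estimate of Lemma \ref{higher-order}, which lies downstream of the present lemma) and $\Vert\beta_\Gamma(\psi)\Vert_{H_\Gamma}$ (Lemma \ref{regh2}, whose proof uses $\mu-\pi(\varphi)\in L^\infty(\tau,+\infty;V)$, i.e.\ exactly \eqref{lv4.13}). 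Moreover, testing \eqref{eq2.8}, \eqref{eq2.11} with the mean-free nonlinearities does not eliminate the flux: integration by parts produces, besides the cross term $\int_\Gamma\partial_{\mathbf{n}}\varphi\,\big(\beta(\psi)-\beta_\Gamma(\psi)\big)\,\mathrm{d}S$ (which again needs $\Vert\partial_{\mathbf{n}}\varphi\Vert_{H_\Gamma}$ to be absorbed), the terms $\langle\beta(\varphi)\rangle_\Omega\,P$ and $\langle\beta_\Gamma(\psi)\rangle_\Gamma\,P$, each a product of two quantities not yet bounded, and neither monotonicity nor \eqref{assum1} gives them a sign. What actually rescues $K=0$ is structurally different: there, admissible test pairs for the combined weak formulation have matching traces, so $\partial_{\mathbf{n}}\varphi$ never enters the tested identity, and the only difficulty is the mismatch $m_0\neq m_{\Gamma0}$ of the two prescribed means. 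This can be handled by a test pair whose bulk and surface means \emph{both} vanish, e.g.\ $(\varphi-m_0-w,\,\psi-m_{\Gamma0})$ with $w\in W^{1,\infty}(\Omega)$, $w|_\Gamma\equiv m_{\Gamma0}-m_0$, $\langle w\rangle_\Omega=0$ and $\Vert m_0+w\Vert_{L^\infty(\Omega)}<1$; then \eqref{lv4.12} follows with no $P$ term at all, $\langle\mu\rangle_\Omega$ is recovered by testing \eqref{eq2.8} with a cut-off vanishing on $\Gamma$, and the mean-value relations then give $P$, $\langle\theta\rangle_\Gamma$ and hence \eqref{lv4.13}.
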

\begin{proof}
	The case $K=0$ has been treated in \cite[Lemma 4.1]{MW}. Concerning the case with $K\in(0,+\infty)$, from  \eqref{eq2.9} and \eqref{eq3.7}, we find
	\begin{align*}
		\|\partial_{\mathbf{n}}\varphi\|_{L^{\infty}(\tau,+\infty;H_{\Gamma})}
		=\frac{1}{K}\|\psi-\varphi\|_{L^{\infty}(\tau,+\infty;H_{\Gamma})}\leq C.
	\end{align*}
	With this simple observation, we can prove \eqref{lv4.12}, \eqref{lv4.13} by an argument similar to that in \cite[Lemma 4.1]{MW}.
\end{proof}

\begin{remark}\rm
	Thanks to \eqref{eq3.8} and \eqref{lv4.13}, we can regard \eqref{eq2.7} and \eqref{eq2.10} as elliptic problem for $\mu$ and $\theta$, respectively, that is
	\begin{align*}
	&\int_{\Omega}\nabla\mu(t)\cdot\nabla z\,\mathrm{d}x=-(\partial_t\varphi(t),z)_{H},
	\quad\text{for all }z\in V\text{ and a.a. }t\in[\tau,+\infty),\\
	&\int_{\Gamma}\nabla_{\Gamma}\theta(t)\cdot\nabla_{\Gamma} z_{\Gamma}\,\mathrm{d}S
	=-(\partial_t\psi(t),z_{\Gamma})_{H_{\Gamma}},
	\quad\text{for all }z_{\Gamma}\in V_{\Gamma}\text{ and a.a. }t\in[\tau,+\infty).
	\end{align*}
	From the elliptic regularity theorem, we find $\boldsymbol{\mu}\in L^{2}_{\text{uloc}}(\tau,+\infty;\mathcal{H}^{2})$. Since $\tau>0$ is arbitrary, this implies that $(\boldsymbol{\varphi},\boldsymbol{\mu})$ becomes a strong solution of problem \eqref{model} on $(0,+\infty)$.
\end{remark}

\begin{lemma}\label{regh2}
	Let $\boldsymbol{\varphi}$ be the global weak solution obtained in Proposition \ref{weakexist}.
	
	(1) In two dimensions, for any given $\tau>0$, $p\in[2,+\infty)$, there exists a positive constant $\widetilde{C}_{0}$ such that
	\begin{align}
		\Vert\beta(\varphi)\Vert_{L^{\infty}(\tau,+\infty;L^{p}(\Omega))}
		+\Vert\beta(\psi)\Vert_{L^{\infty}(\tau,+\infty;L^{p}(\Gamma))}
		\leq\widetilde{C}_{0}\sqrt{p},\label{eq4.4}
	\end{align}
	where $\widetilde{C}_{0}$ is independent of $p$.
	
	(2) In three dimensions, for any given $\tau>0$, there exists a positive constant $\widetilde{C}_1$ such that
	\begin{align}
		\Vert\beta(\varphi)\Vert_{L^{\infty}(\tau,+\infty;L^{p}(\Omega))} +\Vert\beta(\psi)\Vert_{L^{\infty}(\tau,+\infty;L^{p}(\Gamma))}\leq \widetilde{C}_1,\notag
	\end{align}
	where $p\in[1,6]$ and the constant $\widetilde{C}_{1}$ may depend on $p$.
\end{lemma}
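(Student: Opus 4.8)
The plan is to derive the bound by testing the strong form of the chemical-potential equations against a power of the singular term. Concretely, I would multiply \eqref{eq2.8} by $\beta(\varphi)|\beta(\varphi)|^{p-2}$ and integrate over $\Omega$, and simultaneously multiply \eqref{eq2.11} by $\beta(\psi)|\beta(\psi)|^{p-2}$ (note: the \emph{bulk} nonlinearity $\beta$ evaluated at the surface phase $\psi$, not $\beta_\Gamma$) and integrate over $\Gamma$. To make this rigorous one first truncates $\beta$ at a level $M$ and works with bounded test functions, deriving bounds uniform in $M$ and passing to the limit $M\to+\infty$ by monotone convergence; the regularity $\boldsymbol\varphi\in L^2_{\mathrm{uloc}}(\tau,+\infty;\mathcal H^2)$ and $\boldsymbol\mu\in L^\infty(\tau,+\infty;\mathcal H^1)$ (see \eqref{lv4.13}) justify all integrations by parts. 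Since $\beta,\beta_\Gamma$ are monotone with $\beta',\beta_\Gamma'\ge\varpi>0$, the Laplacian terms produce the nonnegative contributions $(p-1)\int_\Omega|\beta(\varphi)|^{p-2}\beta'(\varphi)|\nabla\varphi|^2\,\mathrm dx$ and $(p-1)\int_\Gamma|\beta(\psi)|^{p-2}\beta'(\psi)|\nabla_\Gamma\psi|^2\,\mathrm dS$, which I would discard after moving them to the left-hand side.

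The heart of the argument is the bulk--surface coupling through $\partial_\mathbf n\varphi$, and this is where the choice of the \emph{same} nonlinearity $\beta$ in both test functions pays off. Writing $j(r):=\beta(r)|\beta(r)|^{p-2}$, the boundary integrals arising from the two integrations by parts are $+\int_\Gamma\partial_\mathbf n\varphi\,j(\varphi|_\Gamma)\,\mathrm dS$ (from the bulk) and $-\int_\Gamma\partial_\mathbf n\varphi\,j(\psi)\,\mathrm dS$ (from the surface). For $K=0$ one has $\varphi|_\Gamma=\psi$, so these cancel identically. For $K>0$ one substitutes the Robin condition \eqref{eq2.9}, $\partial_\mathbf n\varphi=(\psi-\varphi)/K$, and the two terms combine into
\[
-\frac1K\int_\Gamma(\varphi|_\Gamma-\psi)\big(j(\varphi|_\Gamma)-j(\psi)\big)\,\mathrm dS\le0,
\]
because $j$ is nondecreasing (a composition of the increasing maps $\beta$ and $r\mapsto r|r|^{p-2}$). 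Thus in \emph{both} regimes the coupling term has a favorable sign and may be dropped. What survives on the surface side is the genuinely singular contribution $\int_\Gamma\beta_\Gamma(\psi)\,\beta(\psi)|\beta(\psi)|^{p-2}\,\mathrm dS=\int_\Gamma|\beta_\Gamma(\psi)|\,|\beta(\psi)|^{p-1}\,\mathrm dS$ (the two factors share the sign of $\psi$), and here I would invoke the compatibility assumption $\mathbf{(A2)}$ in the form $|\beta_\Gamma(\psi)|\ge\varrho^{-1}(|\beta(\psi)|-c_0)$ to bound it below by $\varrho^{-1}\int_\Gamma|\beta(\psi)|^p\,\mathrm dS$ minus a lower-order $L^{p-1}$ term. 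This is exactly why the statement controls $\beta(\psi)$ on $\Gamma$ rather than $\beta_\Gamma(\psi)$.

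Collecting the terms, the left-hand side dominates $\int_\Omega|\beta(\varphi)|^p\,\mathrm dx+\varrho^{-1}\int_\Gamma|\beta(\psi)|^p\,\mathrm dS$, while the right-hand side consists of $\int_\Omega\mu\,\beta(\varphi)|\beta(\varphi)|^{p-2}\,\mathrm dx+\int_\Gamma\theta\,\beta(\psi)|\beta(\psi)|^{p-2}\,\mathrm dS$ together with the $\pi,\pi_\Gamma$ terms, which are bounded since $\pi,\pi_\Gamma$ are globally Lipschitz ($\mathbf{(A3)}$) and $\varphi,\psi\in[-1,1]$. Applying Hölder's inequality (with conjugate exponents $p$ and $p/(p-1)$) and then Young's inequality to absorb the factors $\|\beta(\varphi)\|_{L^p(\Omega)}^{p-1}$ and $\|\beta(\psi)\|_{L^p(\Gamma)}^{p-1}$ (the conjugate-exponent structure keeps the absorption constants $p$-independent) yields
\[
\|\beta(\varphi)\|_{L^p(\Omega)}+\|\beta(\psi)\|_{L^p(\Gamma)}\le C\big(\|\mu\|_{L^p(\Omega)}+\|\theta\|_{L^p(\Gamma)}+1\big),
\]
with $C$ independent of $p$. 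It then remains to insert the Sobolev estimates for $\mu,\theta$, using the uniform bound \eqref{lv4.13}. In two dimensions $\Gamma$ is one-dimensional, so $H^1(\Gamma)\hookrightarrow L^\infty(\Gamma)$ gives $\|\theta\|_{L^p(\Gamma)}\le C$, while the Trudinger--Moser/Gagliardo--Nirenberg inequality $\|\mu\|_{L^p(\Omega)}\le C\sqrt p\,\|\mu\|_{H^1(\Omega)}$ produces the factor $\sqrt p$ with $p$-independent $C$; this gives part (1). In three dimensions $H^1(\Omega)\hookrightarrow L^6(\Omega)$ forces the restriction $p\le6$ and yields a ($p$-dependent) constant $\widetilde C_1$, while $H^1(\Gamma)\hookrightarrow L^p(\Gamma)$ for the two-dimensional $\Gamma$ handles the surface term; this gives part (2).

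The main obstacle is precisely the coupling term: a naive choice testing the surface equation with $\beta_\Gamma(\psi)|\beta_\Gamma(\psi)|^{p-2}$ leaves an uncontrolled boundary contribution involving the trace of $\beta(\varphi)$, whose $L^{p-1}(\Gamma)$ norm cannot be absorbed by the available dissipation (the surviving gradient term carries only one power of $\beta'$, whereas a trace inequality for $|\beta(\varphi)|^{(p-1)/2}$ would require two). Matching the nonlinearity $\beta$ on both sides and exploiting the monotonicity of $j$ is what turns this dangerous term into a sign-definite one and makes the estimate close uniformly in $K\in[0,+\infty)$; the only other point requiring care is the truncation procedure needed to legitimize the singular test functions.
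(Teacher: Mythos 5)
Your proposal is correct and matches the paper's proof in all essential respects: the paper likewise tests \eqref{eq2.8} and \eqref{eq2.11} with $|\beta(\cdot)|^{p-2}\beta(\cdot)$ built from the \emph{bulk} nonlinearity evaluated at (a truncation of) $\varphi$ and $\psi$ respectively, kills the coupling term for $K=0$ by trace matching and for $K\in(0,+\infty)$ by the monotonicity of $r\mapsto|\beta(r)|^{p-2}\beta(r)$ combined with the Robin condition \eqref{eq2.9}, and then closes the estimate exactly as you describe (via $\mathbf{(A2)}$, H\"older/Young with conjugate exponents, and the two-dimensional $\sqrt{p}$ Sobolev bound, these last steps being delegated to the cited companion paper). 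The only cosmetic difference is the regularization device: the paper truncates the phase variables at $\pm(1-1/k)$ via the Lipschitz cut-off $h_k$, whereas you truncate $\beta$ at height $M$, which amounts to the same thing.
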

\begin{proof}
In this proof, we adapt the notations which have been used in the proof of \cite[Lemma 3.5]{FW}. For each integer $k\geq 2$, we define the Lipschitz continuous function $h_{k}:\mathbb{R}\to\mathbb{R}$ by
\begin{align*}
	h_{k}(r):=
	\begin{cases}
	-1+\frac{1}{k}&\text{if }r<-1+\frac{1}{k},\\
	r&\text{if }-1+\frac{1}{k}\leq r\leq1-\frac{1}{k},\\
	1-\frac{1}{k}&\text{if }r>1-\frac{1}{k}.	
	\end{cases}
\end{align*}
For $s\geq\tau$, define
\begin{align*}
	\varphi_{k}(s):=h_{k}\circ\varphi(s),\quad\psi_{k}(s):=h_{k}\circ\psi(s).
\end{align*}
Then, we have $\boldsymbol{\varphi}_{k}:=(\varphi_{k},\psi_{k})\in C([\tau,+\infty);\mathcal{H}_{K}^{1})$ for any $\tau>0$ and
\begin{align*}
	\nabla\varphi_{k}=\nabla\varphi\,\chi_{[-1+\frac{1}{k},1-\frac{1}{k}]}(\varphi),
	\quad\nabla_{\Gamma}\psi_{k}=\nabla_{\Gamma}\psi\,\chi_{[-1+\frac{1}{k},1-\frac{1}{k}]}(\psi),
\end{align*}
where $\chi_{[-1+\frac{1}{k},1-\frac{1}{k}]}$ is the characteristic function of $[-1+1/k,1-1/k]$ defined by
\begin{align*}
	\chi_{[-1+\frac{1}{k},1-\frac{1}{k}]}(r):=
	\begin{cases}
		0,&\text{if }r\leq-1+\frac{1}{k},\\
		1,&\text{if }-1+\frac{1}{k}\leq r\leq1-\frac{1}{k},\\
		0,&\text{if }r\geq1-\frac{1}{k}.
	\end{cases}
\end{align*}
For any $k\geq 2$ and $p\geq2$, we see that $|\beta(\varphi_{k})|^{p-2}\beta(\varphi_{k})\in C([\tau,+\infty);V)$ and $|\beta(\psi_{k})|^{p-2}\beta(\psi_{k})\in C([\tau,+\infty);V_{\Gamma})$ are well-defined with
\begin{align*}
\nabla\big(|\beta(\varphi_{k})|^{p-2}\beta(\varphi_{k})\big)&=(p-1)|\beta(\varphi_{k})|^{p-2}\beta'(\varphi_{k})\nabla\varphi_{k},\\
\nabla_{\Gamma}\big(|\beta(\psi_{k})|^{p-2}\beta(\psi_{k})\big)&=(p-1)|\beta(\psi_{k})|^{p-2}\beta'(\psi_{k})\nabla_{\Gamma}\psi_{k}.
\end{align*}
Multiplying \eqref{eq2.8} by $|\beta(\varphi_{k}(s))|^{p-2}\beta(\varphi_{k}(s))$ and integrating over $\Omega$, in a similar manner, multiplying \eqref{eq2.11} by $|\beta(\psi_{k}(s))|^{p-2}\beta(\psi_{k}(s))$ and integrating over $\Gamma$, using integration by parts and then adding the resultants together, we have
\begin{align}
	&\int_{\Omega}|\beta(\varphi_{k}(s))|^{p-2}\beta(\varphi_{k}(s))\beta(\varphi(s))\,\mathrm{d}x
	+\int_{\Gamma}|\beta(\psi_{k}(s))|^{p-2}\beta(\psi_{k}(s))\beta_{\Gamma}(\psi(s))\,\mathrm{d}S\notag\\
	&\quad=-(p-1)\int_{\Omega}|\beta(\varphi_{k}(s))|^{p-2}\beta'(\varphi_{k}(s))\nabla\varphi_{k}(s)\cdot\nabla\varphi(s)\,\mathrm{d}x\notag\\
	&\qquad-(p-1)\int_{\Gamma}|\beta(\psi_{k}(s))|^{p-2}\beta'(\psi_{k}(s))\nabla_{\Gamma}\psi_{k}(s)\cdot\nabla_{\Gamma}\psi(s)\,\mathrm{d}S\notag\\
	&\qquad+\int_{\Omega}\widetilde{\mu}(s)|\beta(\varphi_{k}(s))|^{p-2}\beta(\varphi_{k}(s))\,\mathrm{d}x
	+\int_{\Gamma}\widetilde{\theta}(s)|\beta(\psi_{k}(s))|^{p-2}\beta(\psi_{k}(s))\,\mathrm{d}S\notag\\
	&\qquad+\int_{\Gamma}\partial_{\mathbf{n}}\varphi(s)\big(|\beta(\varphi_{k}(s))|^{p-2}\beta(\varphi_{k}(s))
	-|\beta(\psi_{k}(s))|^{p-2}\beta(\psi_{k}(s))\big)\,\mathrm{d}S,\label{bbb}
\end{align}
with $\widetilde{\mu}:=\mu-\pi(\varphi)\in L^{\infty}(\tau,+\infty;V)$ and $\widetilde{\theta}:=\theta-\pi_{\Gamma}(\psi)\in L^{\infty}(\tau,+\infty;V_{\Gamma})$.

When $K=0$, we note that $\big(|\beta(\varphi_{k})|^{p-2}\beta(\varphi_{k})\big)|_{\Gamma}=|\beta(\psi_{k})|^{p-2}\beta(\psi_{k})\in C([\tau,+\infty);V_{\Gamma})$, then the proof can be carried out in the same way as \cite[Lemma 3.4]{LvWu-2}. For the case $K\in(0,+\infty)$, since the boundary condition \eqref{eq2.9} does not allow us to derive the following relation:
$$|\beta(\varphi_{k})|^{p-2}\beta(\varphi_{k})=|\beta(\psi_{k})|^{p-2}\beta(\psi_{k})\quad\text{on }\Sigma,$$
we need to treat the last term  $\int_{\Gamma}\partial_{\mathbf{n}}\varphi\big(|\beta(\varphi_{k})|^{p-2}\beta(\varphi_{k})-|\beta(\psi_{k})|^{p-2} \beta(\psi_{k})\big)\,\mathrm{d}S$ on the right-hand side of \eqref{bbb}. By $\mathbf{(A1)}$, we see that for $p\geq2$, it holds
$$\big(|\beta(r)|^{p-2}\beta(r)\big)'=(p-1)|\beta(r)|^{p-2}\beta'(r)\geq0,\quad\forall\,r\in(-1,1),$$
which implies that the function $|\beta(r)|^{p-2}\beta(r)$ is monotone increasing on $(-1,1)$. By the construction of the Lipschitz continuous function $h_{k}$, we see that $h_{k}:\mathbb{R}\rightarrow(-1,1)$ is monotone increasing as well. Hence, we can conclude that the function $|\beta(h_{k}(r))|^{p-2}\beta(h_{k}(r))$ is also monotone increasing on $\mathbb{R}$. Taking the boundary condition \eqref{eq2.9} into account, it holds
\begin{align*}
&\int_{\Gamma}\partial_{\mathbf{n}}\varphi\big(|\beta(\varphi_{k})|^{p-2}\beta(\varphi_{k})
-|\beta(\psi_{k})|^{p-2}\beta(\psi_{k})\big)\,\mathrm{d}S\\
&\quad=\frac{1}{K}\int_{\Gamma}\big(\psi-\varphi\big)\big(|\beta(\varphi_{k})|^{p-2}\beta(\varphi_{k})
-|\beta(\psi_{k})|^{p-2}\beta(\psi_{k})\big)\,\mathrm{d}S\leq0.
\end{align*}
Then, we can conclude Lemma \ref{regh2} following the argument in \cite[Lemma 3.4]{LvWu-2}.
\end{proof}

Higher order estimates can be obtained using the argument like in \cite{FW,MW,LvWu-2}.
\begin{lemma}
\label{higher-order}
Let $(\boldsymbol{\varphi},\boldsymbol{\mu})$ be the global weak solution obtained in Proposition \ref{weakexist}. For any given $\tau>0$, there exists a positive constant $M_{4}$ such that
\begin{align}
	&\|\boldsymbol{\varphi}\|_{L^{\infty}(\tau,+\infty;\mathcal{H}_{K}^{2})}
	+\|\beta_{\Gamma}(\psi)\|_{L^{\infty}(\tau,+\infty;H_{\Gamma})}\leq M_{4}.\label{eq3.21}
\end{align}
\end{lemma}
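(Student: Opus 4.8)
The plan is to freeze the time variable and regard \eqref{eq2.8}, \eqref{eq2.11} together with the coupling \eqref{eq2.9} as a nonlinear elliptic bulk--surface boundary value problem for $\boldsymbol{\varphi}(t)$ at almost every $t\ge\tau$, with right-hand sides $\widetilde\mu:=\mu-\pi(\varphi)$ and $\widetilde\theta:=\theta-\pi_\Gamma(\psi)$. By the preceding estimates I already have $\|\boldsymbol\varphi\|_{L^\infty(\tau,+\infty;\mathcal H^1_K)}\le M_1$, $\|\boldsymbol\mu\|_{L^\infty(\tau,+\infty;\mathcal H^1)}\le M_3$ (so $\widetilde\mu,\widetilde\theta$ are bounded in $V$, $V_\Gamma$ uniformly in $t$ by $\mathbf{(A3)}$), the pointwise bounds $\varphi,\psi\in[-1,1]$, and, for $K>0$, $\|\partial_\mathbf{n}\varphi\|_{L^\infty(\tau,+\infty;H_\Gamma)}\le C$. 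The goal is to upgrade these to the $L^2$ bound on $\beta_\Gamma(\psi)$ and the $\mathcal H^2$-regularity of $\boldsymbol\varphi$, uniformly for $t\ge\tau$.

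The first and main step is to show $\beta_\Gamma(\psi)\in L^\infty(\tau,+\infty;H_\Gamma)$ together with $\beta(\varphi)\in L^\infty(\tau,+\infty;H)$. As in Lemma \ref{regh2} I would work with the Lipschitz truncations $\varphi_k,\psi_k$ to make all manipulations rigorous, test the bulk identity \eqref{eq2.8} by $\beta(\varphi_k)$ over $\Omega$ and the surface identity \eqref{eq2.11} by $\beta_\Gamma(\psi_k)$ over $\Gamma$, integrate by parts, and add. The monotone gradient contributions $\int_\Omega\beta'(\varphi_k)|\nabla\varphi_k|^2$ and $\int_\Gamma\beta_\Gamma'(\psi_k)|\nabla_\Gamma\psi_k|^2$ are nonnegative and may be discarded, leaving $\int_\Omega|\beta(\varphi_k)|^2$ and $\int_\Gamma|\beta_\Gamma(\psi_k)|^2$ (after passing to the limit $k\to+\infty$ by Fatou/monotone convergence) on the left. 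The terms $\int_\Omega\widetilde\mu\,\beta(\varphi_k)$ and $\int_\Gamma\widetilde\theta\,\beta_\Gamma(\psi_k)$ are absorbed by Young's inequality against $M_3$ and a small multiple of the quadratic terms.

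The key difficulty is the boundary coupling term $\int_\Gamma\partial_\mathbf{n}\varphi\big(\beta(\varphi_k)-\beta_\Gamma(\psi_k)\big)\,\mathrm dS$ that survives the integration by parts, because---unlike in Lemma \ref{regh2}---the two test functions are no longer trace-compatible. When $K=0$ I would instead choose trace-compatible test functions (testing the bulk by $\beta_\Gamma(\varphi_k)$, whose trace equals $\beta_\Gamma(\psi_k)$ since $\varphi|_\Gamma=\psi$), so that the $\partial_\mathbf{n}\varphi$-contributions cancel exactly, as in \cite{MW,LvWu-2}. When $K\in(0,+\infty)$ I would substitute $\partial_\mathbf{n}\varphi=K^{-1}(\psi-\varphi)$ and split $\beta(\varphi_k)-\beta_\Gamma(\psi_k)=\big(\beta(\varphi_k)-\beta(\psi_k)\big)+\big(\beta(\psi_k)-\beta_\Gamma(\psi_k)\big)$; the first piece gives $K^{-1}\int_\Gamma(\psi-\varphi)\big(\beta(\varphi_k)-\beta(\psi_k)\big)\le0$ by monotonicity of $r\mapsto\beta(h_k(r))$ and may be dropped, while the second is estimated by $\mathbf{(A2)}$ (so that $|\beta(\psi_k)-\beta_\Gamma(\psi_k)|\le(\varrho+1)|\beta_\Gamma(\psi_k)|+c_0$), the uniform bound on $\|\partial_\mathbf{n}\varphi\|_{H_\Gamma}$, and Young's inequality, yielding $\tfrac14\|\beta_\Gamma(\psi_k)\|_{H_\Gamma}^2+C$. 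Absorbing the small quadratic term on the left then gives the desired uniform $L^2$ bounds on $\beta(\varphi)$ and $\beta_\Gamma(\psi)$.

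Once $\beta(\varphi)\in L^\infty(\tau,+\infty;H)$ and $\beta_\Gamma(\psi)\in L^\infty(\tau,+\infty;H_\Gamma)$ are in hand, I would close the argument by elliptic regularity. Rewriting \eqref{eq2.8}, \eqref{eq2.11} as $-\Delta\varphi=\widetilde\mu-\beta(\varphi)\in L^\infty(\tau,+\infty;H)$ and $-\Delta_\Gamma\psi=\widetilde\theta-\partial_\mathbf{n}\varphi-\beta_\Gamma(\psi)\in L^\infty(\tau,+\infty;H_\Gamma)$ (for $K>0$, the term $\partial_\mathbf{n}\varphi=K^{-1}(\psi-\varphi)$ is already controlled in $H_\Gamma$), the regularity theory for the coupled bulk--surface elliptic operator associated with $\mathcal W_K^2$ (cf. \cite{CFW}) gives $\boldsymbol\varphi\in\mathcal H^2_K$ with a bound depending only on the preceding constants. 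Since all the constants are uniform for $t\ge\tau$, this yields \eqref{eq3.21}. The only genuinely new point compared with \cite{FW,MW,LvWu-2} is the treatment of the coupling term in the Robin case $K\in(0,+\infty)$, which is where I expect the main effort to lie.
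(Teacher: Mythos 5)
Your Robin branch ($K\in(0,+\infty)$) is essentially correct and matches the spirit of the arguments the paper points to: testing \eqref{eq2.8} by $\beta(\varphi_k)$ and \eqref{eq2.11} by $\beta_\Gamma(\psi_k)$, splitting the coupling term as $\big(\beta(\varphi_k)-\beta(\psi_k)\big)+\big(\beta(\psi_k)-\beta_\Gamma(\psi_k)\big)$, discarding the first piece by monotonicity of $\beta\circ h_k$ together with \eqref{eq2.9}, and controlling the second via $\mathbf{(A2)}$ and the uniform bound $\|\partial_{\mathbf{n}}\varphi\|_{L^\infty(\tau,+\infty;H_\Gamma)}\leq C$ does close the $L^2$ estimates for $\beta(\varphi)$ and $\beta_\Gamma(\psi)$, and the subsequent Robin/surface elliptic regularity then yields \eqref{eq3.21} for $K>0$.

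The $K=0$ branch, however, contains a genuine gap. You propose to restore trace compatibility by testing the bulk identity \eqref{eq2.8} with $\beta_\Gamma(\varphi_k)$. This places $\int_\Omega\beta(\varphi)\beta_\Gamma(\varphi_k)\,\mathrm{d}x$ on the left and $\int_\Omega\widetilde\mu\,\beta_\Gamma(\varphi_k)\,\mathrm{d}x$ on the right, and the Young absorption you invoke cannot close: assumption $\mathbf{(A2)}$ (see \eqref{assum1}) is one-sided, with $\beta_\Gamma$ dominating $\beta$, so the left-hand product only controls $\|\beta(\varphi_k)\|_{L^2(\Omega)}^2$ (via $|\beta_\Gamma|\geq(|\beta|-c_0)/\varrho$ and sign compatibility), whereas the right-hand term requires a bound on $\|\beta_\Gamma(\varphi_k)\|_{L^2(\Omega)}$ --- a quantity controlled nowhere in the theory; since $\mathbf{(A2)}$ allows $\beta_\Gamma$ to grow arbitrarily faster than $\beta$, the composition $\beta_\Gamma\circ\varphi$ in the bulk need not even lie in $L^1(\Omega)$. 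This is also not what \cite{MW,LvWu-2} do: there (and in the paper's own Lemma \ref{regh2}) \emph{both} equations are tested with powers of the dominated nonlinearity $\beta$ --- the bulk by $|\beta(\varphi_k)|^{p-2}\beta(\varphi_k)$ and the surface by its trace $|\beta(\psi_k)|^{p-2}\beta(\psi_k)$ --- which yields $\beta(\varphi)\in L^2(\Omega)$ and $\beta(\psi)\in L^2(\Gamma)$, but \emph{not} $\beta_\Gamma(\psi)\in L^2(\Gamma)$. For $K=0$ the bound on $\beta_\Gamma(\psi)$ must come from a bootstrap: regard $-\Delta\varphi=\widetilde\mu-\beta(\varphi)\in L^\infty(\tau,+\infty;H)$ as a Dirichlet problem with data $\varphi|_\Gamma=\psi\in L^\infty(\tau,+\infty;V_\Gamma)$, so that elliptic regularity (e.g. \cite{KL}) gives $\partial_{\mathbf{n}}\varphi\in L^\infty(\tau,+\infty;H_\Gamma)$; only then can one test \eqref{eq2.11} alone by $\beta_\Gamma(\psi_k)$ and use monotonicity to obtain $\|\beta_\Gamma(\psi)\|_{H_\Gamma}\leq\|\widetilde\theta-\partial_{\mathbf{n}}\varphi\|_{H_\Gamma}$, and afterwards conclude $\psi\in H^2(\Gamma)$ and $\varphi\in H^2(\Omega)$. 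Note that your final elliptic step suffers from the same circularity in the case $K=0$: the coupled bulk--surface regularity theory needs $\widetilde\theta-\beta_\Gamma(\psi)\in L^2(\Gamma)$ as datum, which is precisely what your first step fails to provide.
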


\begin{remark}\rm
	\label{V-conti-high}
From Lemmas \ref{time-deri} and \ref{higher-order}, we find that $\boldsymbol{\varphi}\in C((0,+\infty);\mathcal{H}^{2r})$ for any $r\in(3/4,1)$. Thanks to the Sobolev embedding theorem, for any given $\tau>0$, it holds $\varphi\in C(\overline{\Omega}\times[\tau,+\infty))$ and $\psi\in C(\Gamma\times[\tau,+\infty))$, moreover,
\begin{align}
\|\varphi(t)\|_{C(\overline{\Omega})}\leq1,
\quad\|\psi(t)\|_{C(\Gamma)}\leq1,\quad\forall\,t\geq\tau.\label{conti1}
\end{align}
\end{remark}

\section{Separation from Pure States}
\setcounter{equation}{0}
In this section, we first establish Theorem \ref{intantaneous} on the instantaneous strict separation property in two dimensions under the assumption $\mathbf{(A5a)}$ or $\mathbf{(A5b)}$. Subsequently, we prove Theorem \ref{eventual} on the eventual strict separation property in both two and three dimensions, without relying on those additional assumptions.

\subsection{Instantaneous separation in two dimensions}
For any given $\tau>0$, we prove that the global weak solutions to problem \eqref{model} will stay uniformly away from $\pm1$ for all $t\geq \tau$. To this aim, we first establish the separation property by extending the direct method in \cite[Theorem 3.1]{GGG} for the Cahn-Hilliard equation subject to homogeneous Neumann boundary conditions, under the assumption $\mathbf{(A5a)}$. Then, we give a second proof based on a suitable De Giorgi's iteration scheme inspired by \cite{GP}, under the milder assumption $\mathbf{(A5b)}$  (cf. \cite{LvWu-2}).

\begin{proof}[\textbf{Proof of Theorem \ref{intantaneous}: the case with $\mathbf{(A5a)}$}]
By Fubini-Tonelli's theorem and \eqref{eq4.4}, we now compute for any $k\in\mathbb{Z}^+$ such that
	\begin{align}
		\sup_{t\geq\tau}\int_{\Omega}\Big|e^{C_{\sharp}|\beta(\varphi(t))|^{\gamma_{\sharp}}}\Big|^{k}\,\mathrm{d}x
		&=\sup_{t\geq\tau}\int_{\Omega}e^{C_{\sharp}k|\beta(\varphi(t))|^{\gamma_{\sharp}}}\,\mathrm{d}x\notag\\
		&=\sup_{t\geq\tau}\int_{\Omega}\sum_{p=0}^{\infty}\frac{C_{\sharp}^{p}k^{p}\big|\beta\big(\varphi(t)\big)\big|^{\gamma_{\sharp}p}}{p!}\,\mathrm{d}x\notag\\
		&=\sum_{p=0}^{\infty}\frac{C_{\sharp}^{p}k^{p}}{p!}\sup_{t\geq\tau}\int_{\Omega}\big|\beta\big(\varphi(t)\big)\big|^{\gamma_{\sharp}p}\,\mathrm{d}x\notag\\
		&=\sum_{p=0}^{\infty}\frac{C_{\sharp}^{p}k^{p}}{p!}\sup_{t\geq\tau}\big\|\beta\big(\varphi(t)\big)\big\|_{L^{\gamma_{\sharp}p}(\Omega)}^{\gamma_{\sharp}p}\notag\\
		&\leq\sum_{p=0}^{\infty}\frac{\big(\widetilde{C}_{0}^{\gamma_{\sharp}}C_{\sharp}k\big)^{p}}{p!}(\gamma_{\sharp}p)^{\frac{\gamma_{\sharp}p}{2}}.\notag
	\end{align}
	Following the same argument as that for \cite[(26)]{GGG}, we get
	\begin{align}
		&\sup_{t\geq\tau}\int_{\Omega}\Big|e^{C_{\sharp}|\beta(\varphi(t))|^{\gamma_{\sharp}}}\Big|^{k}\,\mathrm{d}x\notag\\
		&\qquad\leq 1+\frac{1}{\sqrt{2\pi}}\left(\frac{\Big(\widetilde{C}_{0}^{\gamma_{\sharp}}C_{\sharp}k\gamma_{\sharp}^{\frac{\gamma_{\sharp}}{2}}e\Big)^{2\Big(\widetilde{C}_{0}^{\gamma_{\sharp}} C_{\sharp}k\gamma_{\sharp}^{\frac{\gamma_{\sharp}}{2}}e\Big)^{2}}}{\mathrm{ln}\Big(\widetilde{C}_{0}^{\gamma_{\sharp}}C_{\sharp}k\gamma_{\sharp}^{\frac{\gamma_{\sharp}}{2}}e\Big)} +\frac{1}{\big(1-\frac{\gamma_{\sharp}}{2}\big)\mathrm{ln}(2)} +e^{\frac{\big(1-\frac{\gamma_{\sharp}}{2}\big)\Big(\widetilde{C}_{0}^{\gamma_{\sharp}} C_{\sharp}k\gamma_{\sharp}^{\frac{\gamma_{\sharp}}{2}}e\Big)^{\frac{1}{1-\frac{\gamma_{\sharp}}{2}}}}{e}}\right).\notag
	\end{align}
Then we deduce from $\mathbf{(A5a)}$ that
	\begin{align}
		\sup_{t\geq\tau}\Vert\beta'\big(\varphi(t)\big)\Vert_{L^{p}(\Omega)}\leq C_{\beta}(p),\quad \forall\, p\geq 2,\label{4.48-1}
	\end{align}
	where

	\begin{align*}
		C_{\beta}(p)=\left(C_{\sharp}+\frac{C_{\sharp}}{\sqrt{2\pi}} \left(\frac{\Big(\widetilde{C}_{0}^{\gamma_{\sharp}}C_{\sharp}p \gamma_{\sharp}^{\frac{\gamma_{\sharp}}{2}}e\Big)^{2\Big(\widetilde{C}_{0}^{\gamma_{\sharp}} C_{\sharp}p\gamma_{\sharp}^{\frac{\gamma_{\sharp}}{2}}e\Big)^{2}}} {\mathrm{ln}\Big(\widetilde{C}_{0}^{\gamma_{\sharp}}C_{\sharp}p\gamma_{\sharp}^{\frac{\gamma_{\sharp}}{2}}e\Big)} +\frac{1}{\big(1-\frac{\gamma_{\sharp}}{2}\big)\mathrm{ln}(2)} +e^{\frac{\big(1-\frac{\gamma_{\sharp}}{2}\big)\Big(\widetilde{C}_{0}^{\gamma_{\sharp}} C_{\sharp}p\gamma_{\sharp}^{\frac{\gamma_{\sharp}}{2}}e\Big)^{\frac{1}{1- \frac{\gamma_{\sharp}}{2}}}}{e}}\right)\right)^{1/p}.
	\end{align*}
	Analogously, we have
	\begin{align}
		\sup_{t\geq\tau}\Vert\beta'\big(\psi(t)\big)\Vert_{L^{p}(\Gamma)}\leq C_{\beta}(p),\quad \forall\, p\geq 2.\label{4.48-2}
	\end{align}
	On the other hand, we infer from \eqref{eq3.21} and the Sobolev embedding theorem that
	\begin{align}
		\sup_{t\geq\tau}\Vert\varphi(t)\Vert_{W^{1,6}(\Omega)} +\sup_{t\geq\tau}\Vert\psi(t)\Vert_{W^{1,6}(\Gamma)}
		\leq C_{E}\Big(\sup_{t\geq\tau}\Vert\varphi(t)\Vert_{H^{2}(\Omega)}
		+\sup_{t\geq\tau}\Vert\psi(t)\Vert_{H^{2}(\Gamma)}\Big)\leq\widetilde{C}_{\tau},
\label{4.49}
	\end{align}
	where $C_{E}>0$ is related to the constant of embedding. By \eqref{eq4.4}, \eqref{4.48-1}, \eqref{4.48-2}, \eqref{4.49} and H\"{o}lder's inequality, we obtain
	\begin{align*}
		\sup_{t\geq\tau}\Vert\beta\big(\varphi(t)\big)\Vert_{W^{1,3}(\Omega)}
		+\sup_{t\geq\tau}\Vert\beta\big(\psi(t)\big)\Vert_{W^{1,3}(\Gamma)}
		\leq\widetilde{C}_{0}\sqrt{3}+C_{\beta}(6)\widetilde{C}_{\tau}.
	\end{align*}
	Since $W^{1,3}(\Omega)\hookrightarrow C(\overline{\Omega})$ and $W^{1,3}(\Gamma)\hookrightarrow C(\Gamma)$ (recall that $d=2$), it follows that
	\begin{align*}
		\sup_{t\geq\tau}\Vert\beta\big(\varphi(t)\big)\Vert_{L^{\infty}(\Omega)}
		+\sup_{t\geq\tau}\Vert\beta\big(\psi(t)\big)\Vert_{L^{\infty}(\Gamma)}\leq
		C_{E}^{*}\Big(\widetilde{C}_{0}\sqrt{3}+C_{\beta}(6)\widetilde{C}_{\tau}\Big),
	\end{align*}
	where $C_{E}^{*}>0$ is related to the constant of embedding. Thus, taking $$\delta_{2}=1-\beta^{-1}\left(C_{E}^{*}\left(\widetilde{C}_{0}\sqrt{3} +C_{\beta}(6)\widetilde{C}_{\tau}\right)\right),$$
	we arrive at the conclusion \eqref{2.1}. Here, $\beta^{-1}$ denotes the inverse function of $\beta$ (cf. $\mathbf{(A1)}$).
\end{proof}

\begin{proof}[\textbf{Proof of Theorem \ref{intantaneous}: the case with $\mathbf{(A5b)}$ }]
	We now apply the De Giorgi's iteration scheme for the equations of chemical potentials $\mu$ and $\theta$ as in \cite{LvWu-2}. Let $\tau>0$ be arbitrary but fixed positive time. Thanks to Remark \ref{V-conti-high}, $\boldsymbol{\varphi}$ is well defined for all $t\geq\tau$ and satisfies \eqref{conti1}. For $\delta\in(0,1)$, we introduce the sequence
	$$k_{n}=1-\delta-\frac{\delta}{2^{n}},\quad\forall\,n\in\mathbb{N},$$
	such that
	$$1-2\delta<k_{n}<k_{n+1}<1-\delta,\quad\forall\,n\geq1,\qquad k_{n}\to1-\delta\text{ as }n\to+\infty.$$
	For any $n\in\mathbb{N}$, we set
	$$\varphi_{n}(x,t)=(\varphi(x,t)-k_{n})^{+},\quad\psi_{n}(x,t)=(\psi(x,t)-k_{n})^{+}.$$
	By definition, it is obvious that $0\leq\varphi_{n},\psi_{n}\leq 2\delta$ (see \cite{GP}). For any $n\in\mathbb{N}$, multiplying \eqref{eq2.8} by $\varphi_{n}$ and integrating over $\Omega$, multiplying \eqref{eq2.11} by $\psi_{n}$ and integrating over $\Gamma$, using integration by parts and then adding the resultants together, we obtain
	\begin{align}
		&\Vert\nabla\varphi_{n}\Vert_{H}^{2}+\Vert\nabla_{\Gamma}\psi_{n}\Vert_{H_{\Gamma}}^{2}
		+\int_{\Omega}\beta(\varphi)\varphi_{n}\,\mathrm{d}x
		+\int_{\Gamma}\beta_{\Gamma}(\psi)\psi_{n}\,\mathrm{d}S\notag\\
		&\quad=\int_{\Omega}\mu\varphi_{n}\,\mathrm{d}x+\int_{\Gamma}\theta\psi_{n}\,\mathrm{d}S +\left(-\int_{\Omega}\pi(\varphi)\varphi_{n}\,\mathrm{d}x
		-\int_{\Gamma}\pi_{\Gamma}(\psi)\psi_{n}\,\mathrm{d}S\right)\notag\\
		&\qquad+\int_{\Gamma}\partial_{\mathbf{n}}\varphi\big(\varphi_{n}-\psi_{n}\big)\,\mathrm{d}S.\label{de}
	\end{align}
	For the case $K=0$, the last term on the right hand side of \eqref{de} simply vanishes. Consequently, the proof can be done by repeating the argument in \cite[Section 4.1]{LvWu-2}. For the case $K\in(0,+\infty)$, taking the boundary condition \eqref{eq2.9} into account, we find $$\int_{\Gamma}\partial_{\mathbf{n}}\varphi\big(\varphi_{n}-\psi_{n}\big)\,\mathrm{d}S=\frac{1}{K}\int_{\Gamma}(\psi-\varphi)(\varphi_{n}-\psi_{n})\,\mathrm{d}S\leq0.$$
With this observation, the remaining part of the proof is again the same as in \cite{LvWu-2}.
\end{proof}

\subsection{Eventual separation property}

In what follows, we establish the eventual separation property by studying the strict separation property of  elements in $\omega$-limit set.
For any given number $a_{1},a_{2}\in(-1,1)$, set $\boldsymbol{a}:=(a_{1},a_{2})$. We introduce the phase space
\begin{align*}
\mathcal{Z}_{\boldsymbol{a}}=
\big\{\boldsymbol{\varphi}=(\varphi,\psi)\in \mathcal{H}_{K}^{1}:\,\langle\varphi\rangle_{\Omega}=a_{1},\ \langle\psi\rangle_{\Gamma}=a_{2},\  E\big(\boldsymbol{\varphi}\big)<+\infty\,\big\}.
\end{align*}
The metric d$_{\mathcal{Z}_{\boldsymbol{a}}}\big(\cdot,\cdot\big)$ on $\mathcal{Z}_{\boldsymbol{a}}$ is defined as follows:
\begin{align}
\text{d}_{\mathcal{Z}_{\boldsymbol{a}}}\big(\boldsymbol{\varphi}_{1},\boldsymbol{\varphi}_{2}\big)
&:=\Vert\boldsymbol{\varphi}_{1}-\boldsymbol{\varphi}_{2}
\Vert_{\mathcal{H}_{K}^{1}}+\left|\int_{\Omega}\widehat{\beta}(\varphi_{1})\,\mathrm{d}x
-\int_{\Omega}\widehat{\beta}(\varphi_{2})\,\mathrm{d}x\right|^{\frac{1}{2}}\notag\\
&\quad\ \ +\left|\int_{\Gamma}\widehat{\beta}_{\Gamma}(\psi_{1})\,\mathrm{d}S
-\int_{\Gamma}\widehat{\beta}_{\Gamma}(\psi_{2})\,\mathrm{d}S\right|^{\frac{1}{2}}
,\quad\forall\,\boldsymbol{\varphi}_{1},\boldsymbol{\varphi}_{2}\in \mathcal{Z}_{\boldsymbol{a}}\notag
\end{align}
and $\big(\mathcal{Z}_{\boldsymbol{a}},\text{d}_{\mathcal{Z}_{\boldsymbol{a}}}(\cdot,\cdot)\big)$ is thus a complete metric space.

Define the $\omega$-limit set
\begin{align}
\omega(\boldsymbol{\varphi}_{0}):=\Big\{\boldsymbol{\varphi}_{\infty}\in\mathcal{H}^{2r}
\cap \mathcal{Z}_{\boldsymbol{m}_{0}}:\,\exists\, t_{n}\nearrow+\infty\text{ such that }\boldsymbol{\varphi}(t_{n})\rightarrow\boldsymbol{\varphi}_{\infty}\text{ in }\mathcal{H}^{2r}\text{ as }n\rightarrow+\infty\Big\}\label{omegalimitset}
\end{align}
for some $r\in(3/4,1)$ and $\boldsymbol{m}_{0}:=(m_{0},m_{\Gamma0})$. Since $\boldsymbol{\varphi}\in L^{\infty}(\tau,+\infty;\mathcal{H}_{K}^{2})$ for any $\tau>0$, then $\{\boldsymbol{\varphi}(t)\}_{t\geq\tau}$ is relatively compact in $\mathcal{H}^{2r}$. Hence, $\omega(\boldsymbol{\varphi}_{0})$ is nonempty, connected and compact in $\mathcal{H}^{2r}$.
We consider the problem \eqref{model} in the time interval $(t_{n},t_{n}+1)$, where $\{t_{n}\}_{n\in\mathbb{N}}$ is the sequence in \eqref{omegalimitset} and we introduce for $t\in[0,1]$ the functions
\begin{align*}
	\overline{\boldsymbol{\varphi}}_{n}(t):=\boldsymbol{\varphi}(t_{n}+t),
	\quad	\overline{\boldsymbol{\mu}}_{n}(t):=\boldsymbol{\mu}(t_{n}+t).
\end{align*}
Without loss of generality, we may assume that $t_{n}\geq1$ for all $n\in\mathbb{Z}^+$. It follows from  \eqref{eq3.7} that
\begin{align*}
	\partial_t\varphi\in L^{2}(0,+\infty;V_{0}^\ast),\quad\partial_t\psi\in L^{2}(0,+\infty;V_{\Gamma,0}^\ast),
\end{align*}
which, together with Lebesgue's dominated convergence theorem, implies
\begin{align*}
\partial_t\overline{\varphi}_{n}\to0\text{ strongly in }L^{2}(0,1;V_{0}^\ast), \quad\partial_t\overline{\psi}_{n}\to0\text{ strongly in }L^{2}(0,1;V_{\Gamma,0}^\ast).
\end{align*}
Based on the regularity properties of weak solutions obtained in Section \ref{globalregularity}, we obtain the following uniform estimates with respect to $n\in\mathbb{Z}^+$:
\begin{align*}
&\|\overline{\boldsymbol{\varphi}}_{n}\|_{L^{\infty}(0,1;\mathcal{H}_{K}^{2})}
+\|\overline{\boldsymbol{\mu}}_{n}\|_{L^{\infty}(0,1;\mathcal{H}^{1})\cap L^{2}(0,1;\mathcal{H}^{2})}
+\|\boldsymbol{\beta}(\overline{\boldsymbol{\varphi}}_{n})\|_{L^{\infty}(0,1;\mathcal{L}^{2})}\leq C.
\end{align*}
By an argument similar to that in \cite[Section 6]{CFL}, we can conclude that  $\boldsymbol{\varphi}_{\infty}$ is the strong solution of the following stationary problem:
\begin{align}
\left\{
\begin{array}{ll}
\Delta\mu_{\infty}=0,&\text{in }\Omega,\\
\mu_{\infty}=-\Delta\varphi_{\infty}+\beta\big(\varphi_{\infty}\big)
+\pi\big(\varphi_{\infty}\big),&\text{in }\Omega, \\
\partial_{\mathbf{n}}\mu_{\infty}=0,&\text{on }\Gamma,\\
K\partial_{\mathbf{n}}\varphi_{\infty}=\psi_{\infty}-\varphi_{\infty},&\text{on }\Gamma,\\
\Delta_{\Gamma}\theta_{\infty}=0,&\text{on }\Gamma,\\
\theta_{\infty}=\partial_{\mathbf{n}}\varphi_{\infty}-\Delta_{\Gamma}\psi_{\infty}
+\beta_{\Gamma}\big(\psi_{\infty}\big)+\pi_{\Gamma}\big(\psi_{\infty}\big),&\text{on }\Gamma.
\end{array}\right.\label{strong}
\end{align}
Here, both $\mu_{\infty}$ and $\theta_{\infty}$ are constants, that is,
\begin{align}
&\mu_{\infty}=\frac{1}{|\Omega|}\Big(\int_{\Omega}\big(\beta(\varphi_{\infty})+\pi(\varphi_{\infty})\big)\,\mathrm{d}x
-\int_{\Gamma}\partial_{\mathbf{n}}\varphi_{\infty}\,\mathrm{d}S\Big),\label{mu-cons}\\
&\theta_{\infty}=\frac{1}{|\Gamma|}\int_{\Gamma}\Big(\partial_{\mathbf{n}}\varphi_{\infty}+\beta_{\Gamma}(\psi_{\infty})
+\pi_{\Gamma}(\psi_{\infty})\Big)\,\mathrm{d}S.\label{theta-cons}
\end{align}
Thus, problem \eqref{strong} simplifies to
\begin{align}
	\left\{
	\begin{array}{ll}
		\mu_{\infty}=-\Delta\varphi_{\infty}
		+\beta\big(\varphi_{\infty}\big)+\pi\big(\varphi_{\infty}\big),&\text{in }\Omega, \\
		K\partial_{\mathbf{n}}\varphi_{\infty}
		=\psi_{\infty}-\varphi_{\infty},&\text{on }\Gamma,\\
		\theta_{\infty}=\partial_{\mathbf{n}}\varphi_{\infty}-\Delta_{\Gamma}\psi_{\infty}
		+\beta_{\Gamma}\big(\psi_{\infty}\big)+\pi_{\Gamma}\big(\psi_{\infty}\big),&\text{on }\Gamma,
	\end{array}\right.\notag
\end{align}
with $\mu_{\infty}$, $\theta_{\infty}$ given in \eqref{mu-cons}, \eqref{theta-cons}, respectively. Finally, due to Corollary \ref{lowerbound}, we see that $E(\boldsymbol{\varphi}_{\infty})\equiv E_{\infty}$ on the $\omega$-limit set $\omega(\boldsymbol{\varphi}_{0})$.

\begin{proof}[\textbf{Proof of Theorem \ref{eventual}: the dynamic approach.}]
Based on the uniform estimates derived in Lemmas \ref{ener}--\ref{higher-order}, by the definition of the $\omega$-limit set $\omega(\boldsymbol{\varphi}_{0})$, we can conclude that, there exists a positive constant $M_{5}$ such that
$$|\mu_{\infty}|+|\theta_{\infty}|\leq M_{5}$$
for $\mu_{\infty}$, $\theta_{\infty}$ given in \eqref{mu-cons}, \eqref{theta-cons} associated with $\boldsymbol{\varphi}_{\infty}\in\omega(\boldsymbol{\varphi}_{0})$. Under assumptions $\mathbf{(A1)}$--$\mathbf{(A4)}$, by a similar argument as \cite[Lemma 4.1]{FW}, there exists an uniform constant $\widetilde{\delta}\in(0,1)$ such that, for every $\boldsymbol{\varphi}_{\infty}\in\omega(\boldsymbol{\varphi}_{0})$, it holds
\begin{align}
&-1+\widetilde{\delta}\leq\varphi_{\infty}\leq1-\widetilde{\delta},\quad\text{in }\Omega,\label{4.8}\\
&-1+\widetilde{\delta}\leq\psi_{\infty}\leq1-\widetilde{\delta},\quad\text{on }\Gamma.\label{4.9}
\end{align}
By the definition of $\omega(\boldsymbol{\varphi}_{0})$, we thus obtain
\begin{align*}
\lim_{t\rightarrow+\infty}\text{dist}\big(\mathcal{S}(t)\boldsymbol{\varphi}_{0},\omega(\boldsymbol{\varphi}_{0})\big)
=0\quad\text{in }\mathcal{H}^{2r},
\end{align*}
where the above distance is given by dist$\big(\boldsymbol{z},\omega(\boldsymbol{\varphi}_{0})\big)=\inf_{\boldsymbol{y}\in\omega(\boldsymbol{\varphi}_{0})}\Vert\boldsymbol{z}-\boldsymbol{y}\Vert_{\mathcal{H}^{2r}}$. For $r\in(d/4,1)$, $d\in\{2,3\}$, by the Sobolev embedding theorem, it holds $\mathcal{H}^{2r}\hookrightarrow C(\overline{\Omega})\times C(\Gamma)$. Hence, we infer from \eqref{4.8} and \eqref{4.9} that \eqref{2.1} holds  with the choice
$$\delta_{1}=\frac{1}{2}\widetilde{\delta},$$
where the constant $\widetilde{\delta}$ is determined as in \eqref{4.8} and \eqref{4.9}.
\end{proof}

\begin{proof}[\textbf{Proof of Theorem \ref{eventual}: via De Giorgi's iteration scheme.}]
Based on the dissipative nature of problem \eqref{model} (see \eqref{eq4.1b}), we can prove the eventual separation property using a suitable De Giorgi's iteration scheme as in \cite{LvWu-2}. Based on Corollary \ref{lowerbound}, by a similar calculation like for \cite[(4.20)]{LvWu-2}, we find
	$$\sup_{t\geq T_{\widetilde{\epsilon}}+1}\Big(\|\nabla\mu(t)\|_{H}^{2}+\|\nabla_{\Gamma}\theta(t)\|_{H_{\Gamma}}^{2}\Big)\leq C\widetilde{\epsilon},$$
	where the constant $\widetilde{\epsilon}\in(0,1)$ is arbitrary small and $T_{\widetilde{\epsilon}}\gg 1$ depends on $\widetilde{\epsilon}$. The only difference from the argument in \cite{LvWu-2} is that there exists an additional term $\int_{\Gamma}\partial_{\mathbf{n}}\varphi(\varphi_{n}-\psi_{n})\,\mathrm{d}S$ on the right-hand side of \cite[(4.22)]{LvWu-2} for the case $K\in(0,+\infty)$. Taking the boundary condition \eqref{eq2.9} into account, it holds
	$$\int_{\Gamma}\partial_{\mathbf{n}}\varphi(\varphi_{n}-\psi_{n})\,\mathrm{d}S=\frac{1}{K}\int_{\Gamma}(\psi-\varphi)(\varphi_{n}-\psi_{n})\,\mathrm{d}S\leq0.$$
	Hence, the additional term does not cause any trouble in the analysis and we can repeat word by word as we did in \cite{LvWu-2} to complete the proof.
\end{proof}

\section{Convergence to Equilibrium}
\setcounter{equation}{0}
\begin{lemma}
Under the assumptions of Theorem \ref{eventual}, there exists a positive constant $M_{6}$ such that
\begin{align}
\Vert\boldsymbol{\varphi}\Vert_{L^{\infty}(T_{\mathrm{SP}},+\infty;\mathcal{H}_{K}^{3})}\leq M_{6},\label{5.1}
\end{align}
where $T_{\mathrm{SP}}>0$ is determined as in Theorem \ref{eventual}.
\end{lemma}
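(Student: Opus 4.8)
The plan is to exploit the eventual separation property established in Theorem \ref{eventual} in order to remove the singular character of $\beta,\beta_{\Gamma}$ on $[T_{\mathrm{SP}},+\infty)$, and then to upgrade the uniform $\mathcal{H}_{K}^{2}$-regularity from Lemma \ref{higher-order} to $\mathcal{H}_{K}^{3}$-regularity by elliptic regularity theory for the coupled bulk--surface problem, keeping all bounds uniform in $t\geq T_{\mathrm{SP}}$. First I would fix $t\geq T_{\mathrm{SP}}$ and invoke \eqref{2.1}: since $|\varphi(t)|\leq 1-\delta_{1}$ in $\overline{\Omega}$ and $|\psi(t)|\leq 1-\delta_{1}$ on $\Gamma$, and $\beta,\beta_{\Gamma}\in C^{2}(-1,1)$ by $\mathbf{(A1)}$, the quantities $\beta(\varphi),\beta'(\varphi),\beta''(\varphi)$ and $\beta_{\Gamma}(\psi),\beta_{\Gamma}'(\psi),\beta_{\Gamma}''(\psi)$ are uniformly bounded in $L^{\infty}$ by constants depending only on $\delta_{1}$ and on $\beta,\beta_{\Gamma}$ restricted to the compact interval $[-1+\delta_{1},1-\delta_{1}]$. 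Combined with the global Lipschitz bounds on $\pi,\pi_{\Gamma}$ from $\mathbf{(A3)}$, this lets me treat the nonlinearities as smooth functions with bounded derivatives for all large times.

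Next I would show that the effective sources
\[
g:=\mu-\beta(\varphi)-\pi(\varphi),\qquad g_{\Gamma}:=\theta-\beta_{\Gamma}(\psi)-\pi_{\Gamma}(\psi)
\]
are uniformly bounded in $L^{\infty}(T_{\mathrm{SP}},+\infty;V)$ and $L^{\infty}(T_{\mathrm{SP}},+\infty;V_{\Gamma})$, respectively. Indeed $\nabla\beta(\varphi)=\beta'(\varphi)\nabla\varphi$, and by Lemma \ref{higher-order} together with the Sobolev embedding $H^{2}(\Omega)\hookrightarrow W^{1,6}(\Omega)$ (valid for $d\in\{2,3\}$) one has $\nabla\varphi\in L^{\infty}(t;L^{6})$ uniformly; since $\beta'(\varphi)$ is uniformly bounded, $\beta(\varphi)\in L^{\infty}(t;V)$ with a uniform bound, and likewise $\pi(\varphi)\in L^{\infty}(t;V)$. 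Using $\boldsymbol{\mu}\in L^{\infty}(T_{\mathrm{SP}},+\infty;\mathcal{H}^{1})$ from \eqref{lv4.13}, I conclude $g\in L^{\infty}(t;V)$ uniformly; the surface estimate for $g_{\Gamma}$ is identical, relying on $H^{2}(\Gamma)\hookrightarrow W^{1,p}(\Gamma)$ and the uniform bound on $\theta$. Hence $(g,g_{\Gamma})\in\mathcal{H}^{1}$ uniformly in $t\geq T_{\mathrm{SP}}$.

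Finally, I would rewrite \eqref{eq2.8}, \eqref{eq2.11} and the coupling \eqref{eq2.9} as the linear bulk--surface elliptic system
\[
-\Delta\varphi=g\ \text{ in }\Omega,\qquad K\partial_{\mathbf{n}}\varphi=\psi-\varphi\ \text{ on }\Gamma,\qquad -\Delta_{\Gamma}\psi+\partial_{\mathbf{n}}\varphi=g_{\Gamma}\ \text{ on }\Gamma,
\]
and apply the elliptic regularity estimate for this coupled operator, which gains two derivatives and gives a bound of the form $\|(\varphi,\psi)\|_{\mathcal{H}^{3}}\leq C\big(\|(g,g_{\Gamma})\|_{\mathcal{H}^{1}}+\|(\varphi,\psi)\|_{\mathcal{H}^{2}}\big)$. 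Combined with the uniform $\mathcal{H}_{K}^{2}$-bound \eqref{eq3.21}, this would yield \eqref{5.1}. The hard part will be precisely this coupled elliptic regularity: the normal-derivative term $\partial_{\mathbf{n}}\varphi$ in the surface equation links bulk and surface regularity nontrivially, and the two regimes $K=0$ (Dirichlet-type transmission $\varphi|_{\Gamma}=\psi$, so that the limit space is $\mathcal{V}^{3}$) and $K\in(0,+\infty)$ (Robin-type coupling) must be handled so that the constant $C$ is uniform. I expect to establish this either by tangential difference quotients on $\Gamma$ to control $\Delta_{\Gamma}\psi$ and the trace $\partial_{\mathbf{n}}\varphi$, followed by standard interior and boundary elliptic estimates for $\varphi$, or by invoking directly the bulk--surface regularity theory already employed in \cite{FW,MW,LvWu-2,CFW}.
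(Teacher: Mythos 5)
Your proposal is correct and follows essentially the same route as the paper: the separation property \eqref{2.1} makes $\beta,\beta_{\Gamma}$ effectively regular, the sources $\mu-\beta(\varphi)-\pi(\varphi)$ and $\theta-\beta_{\Gamma}(\psi)-\pi_{\Gamma}(\psi)$ are bounded uniformly in $V\times V_{\Gamma}$, and the coupled bulk-surface elliptic regularity theory (the paper invokes \cite[Theorem 3.3]{KL}, exactly the ``invoke existing theory'' branch of your final step) upgrades the uniform $\mathcal{H}_{K}^{2}$ bound to the uniform $\mathcal{H}_{K}^{3}$ bound. The only cosmetic differences are that the paper bounds $\nabla\beta(\varphi)=\beta'(\varphi)\nabla\varphi$ using just the uniform $\mathcal{H}_{K}^{1}$ bound \eqref{eq3.7} (your $H^{2}(\Omega)\hookrightarrow W^{1,6}(\Omega)$ detour is admissible but unnecessary), and it adds $\psi$ to both sides of the surface equation so the system matches the form of the cited elliptic theorem.
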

\begin{proof}
The proof relies on the eventual strict property and the regularity theory of elliptic problems (cf. \cite[Lemma 5.1]{CFW}). Let us write the equations \eqref{eq2.8}--\eqref{eq2.11} as
\begin{align}
&-\Delta\varphi(t)=\mu(t)-\beta\big(\varphi(t)\big)-\pi\big(\varphi(t)\big):=h(t)\quad \text{a.e. in }\Omega,\notag\\
&K\partial_{\mathbf{n}}\varphi(t)=\psi(t)-\varphi(t)\quad \text{a.e. on }\Gamma,\notag\\
&\partial_{\mathbf{n}}\varphi(t)-\Delta_{\Gamma}\psi(t)+\psi(t)=\theta(t)-\beta_{\Gamma}\big(\psi(t)\big)
-\pi_{\Gamma}\big(\psi(t)\big)+\psi(t):=h_{\Gamma}(t)\quad \text{a.e. on }\Gamma,\notag
\end{align}
for almost all $t\geq T_{\mathrm{SP}}$. From the separation property \eqref{2.1}, $\mathbf{(A1)}$ and the estimate \eqref{eq3.7}, we can deduce that
\begin{align}
\big\Vert\beta\big(\varphi(t)\big)\big\Vert_{V}+\big\Vert\beta_{\Gamma}\big(\psi(t)\big)\big\Vert_{V_{\Gamma}}\leq C,\quad\text{for a.a. }t\geq T_{\mathrm{SP}}.\notag
\end{align}
Next, it follows from \eqref{lv4.13} that
\begin{align*}
\Vert\mu(t)\Vert_{V}+\Vert\theta(t)\Vert_{V_{\Gamma}}\leq C,\quad \text{for a.a. }t\geq T_{\mathrm{SP}}.
\end{align*}
As a consequence, we have
\begin{align*}
\Vert h(t)\Vert_{V}+\Vert h_{\Gamma}(t)\Vert_{V_{\Gamma}}\leq C,\quad \text{for a.a. }t\geq T_{\mathrm{SP}},
\end{align*}
which together with the elliptic regularity theory (see, e.g., \cite[Theorem 3.3]{KL}) yields \eqref{5.1}.
\end{proof}

By \eqref{eq3.7}, \eqref{5.1} and interpolation, we easily find
 $$\boldsymbol{\varphi}\in C([T_{\mathrm{SP}},+\infty);\mathcal{H}_{K}^{2}).$$
 The uniform estimate \eqref{5.1} and the compact embedding $\mathcal{H}^{3}\hookrightarrow\hookrightarrow\mathcal{H}^{2}$ also imply that the $\omega$-limit set $\omega(\boldsymbol{\varphi}_{0})$ is non-empty and compact in $\mathcal{H}^{2}$. As a result,
$$
\lim_{t\to+ \infty}\mathrm{dist}\big(\mathcal{S}(t)\boldsymbol{\varphi}_{0}, \omega(\boldsymbol{\varphi}_{0})\big)=0\quad\text{in }\mathcal{H}^{2}.
$$
Finally, to prove the $\omega$-limit set $\omega(\boldsymbol{\varphi}_{0})$ reduces to a singleton, we apply the \L ojasiewicz-Simon approach, see, e.g., for applications to the Cahn-Hilliard type equations \cite{AW,FW,GKY,GGM,LW,Wu}. For our current setting with bulk-surface coupling, the main tool is the following extended \L ojasiewicz-Simon inequality, whose proof can be found in the Appendix.

\begin{lemma}
\label{LS}
Suppose that the assumptions in Theorem \ref{eventual} are satisfied. Assume in addition, $\beta$, $\beta_{\Gamma}$ are real analytic on $(-1,1)$ and $\pi$, $\pi_{\Gamma}$ are real analytic on $\mathbb{R}$. Let $\boldsymbol{\varphi}_{\infty}\in\omega(\boldsymbol{\varphi}_{0})$, there exist constants $\varsigma^{*}\in(0,1/2)$ and $b^{\ast}>0$ such that
\begin{align}
\left\Vert\left(
\begin{array}{c}
\mathbf{P}_{\Omega}\big(-\Delta w+\beta(w)+\pi(w)\big)\\
\mathbf{P}_{\Gamma}\big(\partial_{\mathbf{n}}w-\Delta_{\Gamma}w_{\Gamma}+\beta_{\Gamma}(w_{\Gamma})
+\pi_{\Gamma}(w_{\Gamma})\big)
\end{array}\right)\right\Vert_{\mathcal{L}_{(0)}^{2}}
\geq |E(\boldsymbol{w})-E(\boldsymbol{\varphi}_{\infty})|^{1-\varsigma^{*}}\label{5.2}
\end{align}
for all $\boldsymbol{w}=(w,w_{\Gamma})\in\mathcal{W}_{K}^{2}$ satisfying $\Vert\boldsymbol{w}-\boldsymbol{\varphi}_{\infty}\Vert_{\mathcal{H}^{2}}\leq b^{*}$ and $\langle w\rangle_{\Omega}=\langle\varphi_{\infty}\rangle_{\Omega}$, $\langle w_{\Gamma}\rangle_{\Gamma}=\langle\psi_{\infty}\rangle_{\Gamma}$.
\end{lemma}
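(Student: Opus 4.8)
The plan is to derive \eqref{5.2} from an abstract Łojasiewicz--Simon gradient inequality for analytic functionals whose gradient admits a Fredholm linearization at the critical point, in the spirit of Chill's framework and its Cahn--Hilliard adaptations (cf. \cite{AW,FW,LW,Wu}), with the bulk--surface coupling encoded through the space $\mathcal{W}_K^2$.

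First I would localize and restore analyticity. By the eventual separation of the $\omega$-limit set (see \eqref{4.8}--\eqref{4.9}), every $\boldsymbol{\varphi}_\infty\in\omega(\boldsymbol{\varphi}_0)$ obeys $-1+\widetilde\delta\le\varphi_\infty\le 1-\widetilde\delta$ and $-1+\widetilde\delta\le\psi_\infty\le 1-\widetilde\delta$. Since $\mathcal{H}^2\hookrightarrow C(\overline\Omega)\times C(\Gamma)$ for $d\in\{2,3\}$, choosing $b^*$ small forces every $\boldsymbol{w}$ with $\|\boldsymbol{w}-\boldsymbol{\varphi}_\infty\|_{\mathcal{H}^2}\le b^*$ to remain in the strictly separated region $[-1+\widetilde\delta/2,\,1-\widetilde\delta/2]$. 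There I may replace $\widehat\beta,\widehat\beta_\Gamma$ by globally real-analytic functions with bounded derivatives that agree with the originals on this interval; combined with the assumed analyticity of $\pi,\pi_\Gamma$, the energy $E$ becomes real-analytic in a neighborhood of $\boldsymbol{\varphi}_\infty$ without altering its value or gradient there.

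Next I would fix the functional-analytic framework adapted to mass conservation. On the affine constraint $\langle w\rangle_\Omega=m_0$, $\langle w_\Gamma\rangle_\Gamma=m_{\Gamma 0}$, the mean-zero projections appearing in \eqref{5.2} are exactly the $\mathcal{L}^2_{(0)}$-gradient of the constrained energy, the constant multipliers $\mu_\infty,\theta_\infty$ from \eqref{mu-cons}--\eqref{theta-cons} being absorbed by $\mathbf{P}_\Omega,\mathbf{P}_\Gamma$. I would thus view the gradient as an analytic map $\mathcal{M}\colon\mathcal{W}_{K,0}^2\to\mathcal{L}^2_{(0)}$, $\mathcal{M}(\boldsymbol{w})=\big(\mathbf{P}_\Omega(-\Delta w+\beta(w)+\pi(w)),\,\mathbf{P}_\Gamma(\partial_\mathbf{n} w-\Delta_\Gamma w_\Gamma+\beta_\Gamma(w_\Gamma)+\pi_\Gamma(w_\Gamma))\big)$. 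Its Fréchet derivative $\mathcal{L}:=\mathcal{M}'(\boldsymbol{\varphi}_\infty)$ acts by
\begin{align*}
\mathcal{L}\boldsymbol{h}=\begin{pmatrix}\mathbf{P}_\Omega\big(-\Delta h+(\beta'(\varphi_\infty)+\pi'(\varphi_\infty))h\big)\\\mathbf{P}_\Gamma\big(\partial_\mathbf{n} h-\Delta_\Gamma h_\Gamma+(\beta_\Gamma'(\psi_\infty)+\pi_\Gamma'(\psi_\infty))h_\Gamma\big)\end{pmatrix},\quad \boldsymbol{h}\in\mathcal{W}_{K,0}^2,
\end{align*}
with coupling $K\partial_\mathbf{n} h=h_\Gamma-h$. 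By strict separation and $\mathbf{(A3)}$ the coefficients satisfy $\beta'(\varphi_\infty)+\pi'(\varphi_\infty)\ge\varpi-\gamma_1$ and are bounded, so $\mathcal{L}$ is self-adjoint on $\mathcal{L}^2_{(0)}$ with compact resolvent (elliptic regularity for the coupled system together with the compact embedding $\mathcal{H}^2\hookrightarrow\hookrightarrow\mathcal{L}^2$), hence Fredholm of index zero. Feeding the analyticity of $E$ and this Fredholm property into the abstract Łojasiewicz--Simon theorem (carried out in the Appendix) produces $\varsigma^*\in(0,1/2)$ and $b^*>0$ for which \eqref{5.2} holds.

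The hard part will be the functional-analytic treatment of the coupling, namely verifying that $\mathcal{M}$ is a genuine analytic map between the stated spaces and, above all, that its linearization $\mathcal{L}$ is Fredholm despite the nonstandard transmission/Robin condition $K\partial_\mathbf{n} h=h_\Gamma-h$. The bulk and surface equations interact through the trace and the normal derivative, so elliptic regularity and self-adjointness must be established for the coupled system (as in \cite{KL20,KS24}) uniformly in $K$, covering both the Dirichlet transmission case $K=0$ (where $\mathcal{W}_{K,0}^2=\mathcal{V}^2_{(0)}$) and the Robin case $K\in(0,+\infty)$.
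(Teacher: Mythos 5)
Your overall strategy coincides with the paper's own proof: localize near $\boldsymbol{\varphi}_{\infty}$ using the separation \eqref{4.8}--\eqref{4.9} and the embedding $\mathcal{H}^{2}\hookrightarrow C(\overline{\Omega})\times C(\Gamma)$; realize the left-hand side of \eqref{5.2} as the $\mathcal{L}^{2}_{(0)}$-valued gradient $\mathcal{M}$ of the constrained energy on $\mathcal{W}_{K,0}^{2}$, with the constants $\mu_{\infty}$, $\theta_{\infty}$ absorbed by $\mathbf{P}_{\Omega}$, $\mathbf{P}_{\Gamma}$; show that the linearization at the equilibrium is self-adjoint with compact resolvent (coercivity of the shifted bilinear form via Lax--Milgram, elliptic regularity for the coupled bulk-surface system), hence Fredholm; and feed analyticity plus the Fredholm property into Chill's abstract theorem --- the paper applies \cite[Corollary 3.11]{Chill} with $X=V=\mathcal{W}_{K,0}^{2}$, $W=Y=\mathcal{L}_{(0)}^{2}$. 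Your handling of the transmission/Robin coupling (including the sign structure of $\beta'+\pi'$ and the case split $K=0$ versus $K\in(0,+\infty)$) is exactly what the paper carries out; the uniformity in $K$ you ask for is not even needed, since $K$ is fixed in the statement.

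There is, however, one step that is wrong as stated and would fail if you tried to execute it: the claimed replacement of $\widehat{\beta}$, $\widehat{\beta}_{\Gamma}$ by \emph{globally} real-analytic functions with bounded derivatives agreeing with the originals on the separated interval. No such functions exist. Any function real analytic on all of $\mathbb{R}$ that coincides with $\beta$ on $[-1+\widetilde{\delta}/2,\,1-\widetilde{\delta}/2]$ must, by the identity theorem applied on the connected open set $(-1,1)$ where both functions are analytic, coincide with $\beta$ on all of $(-1,1)$; since $\mathbf{(A1)}$ forces $\beta(r)\to\pm\infty$ as $r\to\pm1$, such a function cannot be continuous at $\pm1$, a contradiction. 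The same argument excludes any real-analytic extension beyond $\pm 1$. The repair is standard and is precisely what the paper does: no extension is needed, because analyticity of a Nemytskii operator is a local property. Every $\boldsymbol{w}$ in the chosen $\mathcal{H}^{2}$-ball takes values in the compact separated interval, and on the corresponding $L^{\infty}$-neighborhood the maps $u\mapsto\beta(u+\varphi_{\infty})$, $u_{\Gamma}\mapsto\beta_{\Gamma}(u_{\Gamma}+\psi_{\infty})$ are analytic in the sense of \cite[Definition 2.4]{HJ}, since all derivatives of $\beta$, $\beta_{\Gamma}$ are uniformly bounded there and the Taylor expansions converge in $L^{\infty}$. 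This yields analyticity of $\mathcal{M}:U\to\mathcal{L}^{2}_{(0)}$ and of the energy on $U$ without touching the potentials; with this substitution your argument closes and reproduces the paper's proof.
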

\begin{remark}\rm
As in \cite{FW}, \eqref{4.8} and \eqref{4.9} imply that all elements of $\omega\big(\boldsymbol{\varphi}_{0}\big)$ are uniformly separated from $\pm1$. Then we can take $b^{*}>0$  sufficiently small such that any element $\boldsymbol{w}\in\mathcal{W}_{K}^{2}$ satisfying $\Vert\boldsymbol{w}-\boldsymbol{\varphi}_{\infty}\Vert_{\mathcal{H}^{2}}\leq b^{*}$ is uniformly separated from $\pm1$ as well. In particular, this choice prevents the possible singularities of $\beta$, $\beta_{\Gamma}$.
\end{remark}

\begin{proof}[\textbf{Proof of Theorem \ref{equilibrium}}]
With the aid of Lemma \ref{LS}, the proof can be carried out in a procedure that now becomes standard, see, e.g.,  \cite{FW,GKY,GW} for similar arguments. Here, we just sketch the main steps. First, one can show the orbit of the unique global weak solution $\boldsymbol{\varphi}$ will fall into a small neighborhood of certain $\boldsymbol{\varphi}_{\infty}\in \omega(\bm{\varphi}_0)$ in $\mathcal{W}_{K}^{2}$ and stay there forever. This conclusion can be achieved by using the energy equality \eqref{4.1}, Lemma \ref{LS} together with a contradiction argument as in \cite{HJ}. Then, by the Poincar\'e-Wirtinger inequalities \eqref{bulkpoin}, \eqref{surfacepoin}, we find
$$
\|\partial_t\varphi\|_{V_{0}^\ast}+\|\partial_t \psi\|_{V_{\Gamma,0}^\ast}=\|\nabla\mu\|_{H}+\|\nabla_{\Gamma}\theta\|_{H_{\Gamma}}\geq C|E(\boldsymbol{\varphi})-E(\boldsymbol{\varphi}_{\infty})|^{1-\varsigma^{*}}.
$$
This inequality combined with the energy equality \eqref{eq4.1b} and the \L ojasiewicz-Simon inequality \eqref{5.2} leads to
$$
\int_0^{+\infty} \left(\|\partial_t\varphi(t)\|_{V_{0}^\ast}+\|\partial_t \psi(t)\|_{V_{\Gamma,0}^\ast}\right)\,\mathrm{d}t<+\infty.
$$
Hence, $(\varphi(t)-m_0,\psi(t)-m_{\Gamma0}) \to  ({\varphi}_{\infty}-m_0,\psi_\infty-m_{\Gamma_0})$ in $V_{0}^\ast\times V_{\Gamma,0}^\ast $ as $t\to+ \infty$. By \eqref{5.1} and interpolation, we get the convergence $\boldsymbol{\varphi}\to \boldsymbol{\varphi}_{\infty}$ in $\mathcal{H}^{2}$ as $t\to+\infty$. Finally, the convergence rate \eqref{conver-rate} follows from an argument similar to that in \cite{GW,LW,Wu}.
\end{proof}

\section{Double Obstacle Limit}
\setcounter{equation}{0}
In the final section, we study the double obstacle limit by passing to the limit $\Theta_{k}\rightarrow0$ in the logarithmic potential. In the following, we will denote $\boldsymbol{\varphi}_{\Theta_{k}}:=(\varphi_{\Theta_{k}},\psi_{\Theta_{k}})$, $\boldsymbol{\mu}_{\Theta_{k}}:=(\mu_{\Theta_{k}},\theta_{\Theta_{k}})$ as the weak solution to problem $(S_{\Theta_{k}})$ obtained in Proposition \ref{weakexist}, $k\in\mathbb{Z}^{+}$. For arbitrary but given final time $T\in(0,+\infty)$, we derive \emph{a priori} estimates with respect to $k\in\mathbb{Z}^{+}$. First of all, as a direct result of the energy equality \eqref{eq4.1b}, we have
\begin{lemma}
There exists a constant $C>0$, independent of $k\in\mathbb{Z}^{+}$, such that
\begin{align}
	\Vert\boldsymbol{\varphi}_{\Theta_{k}}\Vert_{L^{\infty}(0,T;\mathcal{H}_{K}^{1})}
	+\Vert\nabla\mu_{\Theta_{k}}\Vert_{L^{2}(0,T;H)}+\Vert\nabla_{\Gamma}\theta_{\Theta_{k}}\Vert_{L^{2}(0,T;H_{\Gamma})}
	\leq C.\label{kappauni1}
\end{align}
\end{lemma}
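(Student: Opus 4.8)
The plan is to read off all three bounds directly from the energy equality \eqref{eq4.1b} applied to problem $(S_{\Theta_k})$, the only genuine work being to verify that the quantities appearing there are controlled uniformly in $k$. Writing $E_{\Theta_k}$ for the total free energy associated with $(S_{\Theta_k})$, the energy equality gives, for every $t\in[0,T]$,
\begin{align}
E_{\Theta_k}\big(\boldsymbol{\varphi}_{\Theta_k}(t)\big)
+\int_0^t\Big(\|\partial_t\varphi_{\Theta_k}(s)\|_{V_0^\ast}^2
+\|\partial_t\psi_{\Theta_k}(s)\|_{V_{\Gamma,0}^\ast}^2\Big)\,\mathrm{d}s
=E_{\Theta_k}\big(\boldsymbol{\varphi}_{0,k}\big).\notag
\end{align}
Thus everything reduces to (i) a uniform upper bound on the right-hand side and (ii) a uniform lower bound on the first term on the left-hand side.

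For the initial energy in step (i), I would use that the phase variables lie in $[-1,1]$ and that $0\le F_0(r)\le 2\ln 2$ there, together with $\Theta_k\le 1$, to estimate $\frac{\Theta_k}{2}\int_\Omega F_0(\varphi_{0,k})\,\mathrm{d}x+\frac{\Theta_k}{2}\int_\Gamma F_0(\psi_{0,k})\,\mathrm{d}S\le \ln 2\,(|\Omega|+|\Gamma|)$. The two Dirichlet energies are bounded by $\|\boldsymbol{\varphi}_{0,k}\|_{\mathcal{H}^1}^2$, which is uniformly bounded because $\boldsymbol{\varphi}_{0,k}\to\boldsymbol{\varphi}_0$ in $\mathcal{H}^1$; the concave quadratic contributions and the Robin penalty term $\frac{\chi(K)}{2}\int_\Gamma|\psi_{0,k}-\varphi_{0,k}|^2\,\mathrm{d}S$ are controlled by the bounds $\|\varphi_{0,k}\|_{L^\infty(\Omega)}\le 1$, $\|\psi_{0,k}\|_{L^\infty(\Gamma)}\le 1$ and the trace inequality. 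Hence $E_{\Theta_k}(\boldsymbol{\varphi}_{0,k})\le C$ uniformly in $k$.

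For step (ii) I would discard the nonnegative contributions ($F_0\ge 0$ and the surface penalty) and bound the remaining concave quadratic terms from below using $|\varphi_{\Theta_k}|,|\psi_{\Theta_k}|\le 1$, obtaining $E_{\Theta_k}(\boldsymbol{\varphi}_{\Theta_k}(t))\ge \frac12\|\nabla\varphi_{\Theta_k}(t)\|_H^2+\frac12\|\nabla_\Gamma\psi_{\Theta_k}(t)\|_{H_\Gamma}^2-\frac{\Theta_c}{2}(|\Omega|+|\Gamma|)$. Combining (i) and (ii) controls the Dirichlet energies uniformly in $t$ and $k$; adding the trivial $L^2$ bounds coming from $\|\varphi_{\Theta_k}\|_{L^\infty},\|\psi_{\Theta_k}\|_{L^\infty}\le 1$ (so that even the mass conservation \eqref{massconser} is not needed here) yields $\|\boldsymbol{\varphi}_{\Theta_k}\|_{L^\infty(0,T;\mathcal{H}_K^1)}\le C$. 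The same two estimates show that the dissipation integral is bounded by $C$, and at this point I would invoke the identities established in the proof of Lemma \ref{ener}, namely $\mathcal{N}_\Omega(\partial_t\varphi_{\Theta_k})=-\mathbf{P}_\Omega(\mu_{\Theta_k})$ and its surface analogue, which give $\|\partial_t\varphi_{\Theta_k}\|_{V_0^\ast}=\|\nabla\mu_{\Theta_k}\|_H$ and $\|\partial_t\psi_{\Theta_k}\|_{V_{\Gamma,0}^\ast}=\|\nabla_\Gamma\theta_{\Theta_k}\|_{H_\Gamma}$; this converts the dissipation bound immediately into the claimed $L^2(0,T)$ estimates for $\nabla\mu_{\Theta_k}$ and $\nabla_\Gamma\theta_{\Theta_k}$.

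The only point requiring care, and the crux of the uniformity in $k$, is the treatment of the singular term $\frac{\Theta_k}{2}\int F_0(\varphi_{0,k})$ in the initial energy: although $F_0$ diverges at $\pm1$, the prefactor $\Theta_k\le 1$ combined with the uniform bound $F_0\le 2\ln 2$ on $[-1,1]$ keeps it under control, and it is precisely this cancellation that makes the deep-quench limit $\Theta_k\to 0$ admissible. Everything else is a routine consequence of the energy equality.
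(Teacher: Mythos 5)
Your proposal is correct and follows essentially the same route as the paper, which states this lemma as "a direct result of the energy equality \eqref{eq4.1b}" without further detail: you have simply filled in the implicit steps (uniform bound on $E_{\Theta_k}(\boldsymbol{\varphi}_{0,k})$ using $\Theta_k\leq 1$ and $0\leq F_0\leq 2\ln 2$ on $[-1,1]$, the coercivity lower bound on $E_{\Theta_k}$, and the identities $\Vert\partial_t\varphi_{\Theta_k}\Vert_{V_0^\ast}=\Vert\nabla\mu_{\Theta_k}\Vert_{H}$, $\Vert\partial_t\psi_{\Theta_k}\Vert_{V_{\Gamma,0}^\ast}=\Vert\nabla_\Gamma\theta_{\Theta_k}\Vert_{H_\Gamma}$), and these are exactly the ingredients the paper itself uses elsewhere, e.g.\ in the proof of Lemma \ref{ener} and of the estimate \eqref{kappauni7}.
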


\begin{lemma}
	\label{kappa2}
There exists a constant $C>0$, independent of $k\in\mathbb{Z}^{+}$, such that
\begin{align}
	&\Vert \Theta_{k}f_{0}(\varphi_{\Theta_{k}})\Vert_{L^{2}(0,T;L^{1}(\Omega))}+\Vert\Theta_{k} f_{0}(\psi_{\Theta_{k}})\Vert_{L^{2}(0,T;L^{1}(\Gamma))}\leq C\big(1+\Vert\partial_{\mathbf{n}}\varphi_{\Theta_{k}}\Vert_{L^{2}(0,T;H_{\Gamma})}\big).\label{kappauni2}
\end{align}
Furthermore,
\begin{align}
	\Vert\mu_{\Theta_{k}}\Vert_{L^{2}(0,T;V)}+\Vert\theta_{\Theta_{k}}\Vert_{L^{2}(0,T;V)}\leq C\big(1+\Vert\partial_{\mathbf{n}}\varphi_{\Theta_{k}}\Vert_{L^{2}(0,T;H_{\Gamma})}\big).\label{kappauni3}
\end{align}
\end{lemma}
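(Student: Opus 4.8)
The plan is to test the pointwise identities \eqref{eq2.8} and \eqref{eq2.11} against the mean-free parts of the phase variables and to exploit the coercivity of the singular parts. For the present choice of nonlinearities the singular part of $F_{\Theta_k}'$ is $\beta_{\Theta_k}=(\Theta_k/2)f_0$ and that of $G_{\Theta_k}'$ is $\beta_{\Gamma,\Theta_k}=(\Theta_k/2)f_0$, so that $\Theta_k f_0(\varphi_{\Theta_k})=2\beta_{\Theta_k}(\varphi_{\Theta_k})$ and $\Theta_k f_0(\psi_{\Theta_k})=2\beta_{\Gamma,\Theta_k}(\psi_{\Theta_k})$; it thus suffices to bound $\|\beta_{\Theta_k}(\varphi_{\Theta_k})\|_{L^2(0,T;L^1(\Omega))}$ and $\|\beta_{\Gamma,\Theta_k}(\psi_{\Theta_k})\|_{L^2(0,T;L^1(\Gamma))}$. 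The key analytic ingredient is the classical inequality for convex singular potentials: for every $m\in(-1,1)$ there are constants $c_1(m)>0$, $c_2(m)\geq 0$ with $f_0(r)(r-m)\geq c_1(m)|f_0(r)|-c_2(m)$ for all $r\in(-1,1)$. Since the means $m_{0,k}=\langle\varphi_{0,k}\rangle_\Omega$ and $m_{\Gamma 0,k}=\langle\psi_{0,k}\rangle_\Gamma$ stay in a fixed compact subset of $(-1,1)$ by hypothesis, and $\Theta_k\leq 1$, multiplying through by $\Theta_k/2$ yields $\beta_{\Theta_k}(r)(r-m_{0,k})\geq c_1|\beta_{\Theta_k}(r)|-c_2$ and the analogous surface estimate, with $c_1,c_2$ uniform in $k$. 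Verifying this uniformity (both in the scaling $\Theta_k$ and in the means) is the first point to make precise, and is where the argument could break if the hypotheses on the means were dropped.

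Next I would multiply \eqref{eq2.8} by $\varphi_{\Theta_k}-m_{0,k}$ and integrate over $\Omega$, multiply \eqref{eq2.11} by $\psi_{\Theta_k}-m_{\Gamma 0,k}$ and integrate over $\Gamma$, integrate by parts in both, and add. The Laplacian terms produce $\|\nabla\varphi_{\Theta_k}\|_H^2+\|\nabla_\Gamma\psi_{\Theta_k}\|_{H_\Gamma}^2\geq 0$ together with the boundary contribution $\int_\Gamma\partial_\mathbf{n}\varphi_{\Theta_k}\big[(\psi_{\Theta_k}-m_{\Gamma 0,k})-(\varphi_{\Theta_k}|_\Gamma-m_{0,k})\big]\,\mathrm{d}S$. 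Inserting the transmission/Robin condition \eqref{eq2.9} in the form $\psi_{\Theta_k}-\varphi_{\Theta_k}|_\Gamma=K\partial_\mathbf{n}\varphi_{\Theta_k}$ splits this into the favorable term $K\int_\Gamma|\partial_\mathbf{n}\varphi_{\Theta_k}|^2\,\mathrm{d}S\geq 0$ and the residual $(m_{0,k}-m_{\Gamma 0,k})\int_\Gamma\partial_\mathbf{n}\varphi_{\Theta_k}\,\mathrm{d}S$; this residual is precisely where the normal derivative enters the final bound. The case $K=0$ is covered simultaneously, since then $\psi_{\Theta_k}=\varphi_{\Theta_k}|_\Gamma$ and the favorable term simply vanishes. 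This careful bookkeeping of the bulk-surface boundary terms is the main place where the coupling must be handled, and together with the uniformity above constitutes the only genuine difficulty.

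Then I would bound the right-hand side term by term. The gradient terms and $K\int_\Gamma|\partial_\mathbf{n}\varphi_{\Theta_k}|^2\,\mathrm{d}S$ carry a good sign and may be discarded. For the chemical-potential terms I would subtract the spatial mean of $\mu_{\Theta_k}$, which is legitimate because, by mass conservation \eqref{massconser}, $\varphi_{\Theta_k}(t)-m_{0,k}$ has zero average; hence $\int_\Omega\mu_{\Theta_k}(\varphi_{\Theta_k}-m_{0,k})\,\mathrm{d}x\leq C_p\|\nabla\mu_{\Theta_k}\|_H\|\varphi_{\Theta_k}\|_V$, and analogously on $\Gamma$, so by \eqref{kappauni1} these are bounded in $L^2(0,T)$ uniformly in $k$. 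The $\pi$- and $\pi_\Gamma$-contributions are controlled using $\|\varphi_{\Theta_k}\|_{L^\infty}\leq 1$, $\|\psi_{\Theta_k}\|_{L^\infty}\leq1$ and \eqref{kappauni1}, while the residual boundary term is bounded by $|m_{0,k}-m_{\Gamma 0,k}|\,|\Gamma|^{1/2}\|\partial_\mathbf{n}\varphi_{\Theta_k}\|_{H_\Gamma}$. Combining these estimates with the coercivity inequality and taking the $L^2(0,T)$ norm in time gives \eqref{kappauni2}, after absorbing $\|\nabla\mu_{\Theta_k}\|_{L^2(0,T;H)}$ and $\|\nabla_\Gamma\theta_{\Theta_k}\|_{L^2(0,T;H_\Gamma)}$ into the constant via \eqref{kappauni1}.

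Finally, for \eqref{kappauni3} it remains to control the spatial means $\langle\mu_{\Theta_k}\rangle_\Omega$ and $\langle\theta_{\Theta_k}\rangle_\Gamma$, since the Poincar\'e--Wirtinger inequalities \eqref{bulkpoin}, \eqref{surfacepoin} reduce the full norms in $V$ and $V_\Gamma$ to the gradients (bounded by \eqref{kappauni1}) plus the means. Integrating \eqref{eq2.8} over $\Omega$ and using $\int_\Omega\Delta\varphi_{\Theta_k}\,\mathrm{d}x=\int_\Gamma\partial_\mathbf{n}\varphi_{\Theta_k}\,\mathrm{d}S$ gives $|\langle\mu_{\Theta_k}\rangle_\Omega|\leq C\big(1+\|\partial_\mathbf{n}\varphi_{\Theta_k}\|_{H_\Gamma}+\|\beta_{\Theta_k}(\varphi_{\Theta_k})\|_{L^1(\Omega)}\big)$; integrating \eqref{eq2.11} over $\Gamma$ and using $\int_\Gamma\Delta_\Gamma\psi_{\Theta_k}\,\mathrm{d}S=0$ gives the analogous bound for $\langle\theta_{\Theta_k}\rangle_\Gamma$. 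Taking $L^2(0,T)$ norms and invoking \eqref{kappauni2} then closes the estimate. Once the uniform coercivity and the boundary-term identity are established, all remaining steps are routine applications of H\"older's and Poincar\'e's inequalities.
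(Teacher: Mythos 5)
Your proposal is correct and follows essentially the same route as the paper: test \eqref{eq2.8} and \eqref{eq2.11} against the mean-free phase variables, apply the coercivity inequality $f_{0}(r)(r-m)\geq c_{m}|f_{0}(r)|-c_{m}'$ with constants uniform over the compact set of means guaranteed by the hypotheses (and uniform in $\Theta_k\leq 1$ after rescaling), control the chemical-potential terms via Poincar\'e--Wirtinger together with \eqref{kappauni1}, and obtain \eqref{kappauni3} by integrating the two equations to bound $\langle\mu_{\Theta_k}\rangle_{\Omega}$ and $\langle\theta_{\Theta_k}\rangle_{\Gamma}$. The only (harmless) deviation is in the boundary term: you add the bulk and surface identities and invoke \eqref{eq2.9} to produce the signed term $K\|\partial_{\mathbf{n}}\varphi_{\Theta_k}\|_{H_{\Gamma}}^{2}$ plus a mean-difference residual, whereas the paper keeps the two identities separate and bounds each boundary term directly by Cauchy--Schwarz using $|\varphi_{\Theta_k}|,|\psi_{\Theta_k}|\leq 1$; both yield the same final estimate, since $\|\partial_{\mathbf{n}}\varphi_{\Theta_k}\|_{L^{2}(0,T;H_{\Gamma})}$ is allowed on the right-hand side anyway.
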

\begin{proof}
	Testing \eqref{eq2.8} by $\varphi_{\Theta_{k}}-\langle\varphi_{\Theta_{k}}\rangle_{\Omega}$, then
	\begin{align}
		&\underbrace{\int_{\Omega}\mu_{\Theta_{k}}(\varphi_{\Theta_{k}}-\langle\varphi_{\Theta_{k}}\rangle_{\Omega})\,\mathrm{d}x}_{I_{1}}\notag\\
		&\quad=\int_{\Omega}\mu_{\Theta_{k}}(\varphi_{\Theta_{k}}-\langle\varphi_{0,k}\rangle_{\Omega})\,\mathrm{d}x\notag\\
		&\quad=-\int_{\Gamma}\partial_{\mathbf{n}}\varphi_{\Theta_{k}}(\varphi_{\Theta_{k}}-\langle\varphi_{0,k}\rangle_{\Omega})\,\mathrm{d}S
		+\int_{\Omega}|\nabla\varphi_{\Theta_{k}}|^{2}\,\mathrm{d}x\notag\\
		&\qquad+\underbrace{\Theta_{k}\int_{\Omega}f_{0}(\varphi_{\Theta_{k}})(\varphi_{\Theta_{k}}
			-\langle\varphi_{0,k}\rangle_{\Omega})\,\mathrm{d}x}_{I_{2}}
			-\Theta_{c}\int_{\Omega}\varphi_{\Theta_{k}}(\varphi_{\Theta_{k}}-\langle\varphi_{0,k}\rangle_{\Omega})\,\mathrm{d}x.\label{lw1}
	\end{align}
	For the term $I_{1}$, by the H\"older's inequality and the Poincar\'e-Wirtinger inequality \eqref{bulkpoin}, it holds
	\begin{align*}
		I_{1}&=\int_{\Omega}(\mu_{\Theta_{k}}-\langle\mu_{\Theta_{k}}\rangle_{\Omega})\varphi_{\Theta_{k}}\,\mathrm{d}x\leq C\Vert\nabla\mu_{\Theta_{k}}\Vert_{H}\Vert\varphi_{\Theta_{k}}\Vert_{H}\leq C\Vert\nabla\mu_{\Theta_{k}}\Vert_{H}.
	\end{align*}
	For the term $I_{2}$, since $\sup_{k\in\mathbb{Z}^{+}}|\langle\varphi_{0,k}\rangle_{\Omega}|<1$, there exists a constant $r_{0}\in(0,1)$ such that
	\begin{align}
	-1+r_{0}\leq\langle\varphi_{0,k}\rangle_{\Omega}\leq1-r_{0}\quad\forall\,k\in\mathbb{Z}^{+}.\label{compact}
	\end{align}
	Recall that for any $r$, $m\in(-1,1)$ (cf. \cite[Proposition 4.3]{Mi}):
	\begin{align*}
		f_{0}(r)(r-m)\geq c_{m}|f_{0}(r)|-c_{m}',\quad c_{m}>0,\quad c_{m}'\geq0,
	\end{align*}
	where the constants $c_{m}$ and $c_{m}'$ depend continuously on $m$. Hence, for $m\in[-1+r_{0},1-r_{0}]$, there exist constants $\widetilde{c}_{r_{0}}>0$ and $\widetilde{c}_{r_{0}}'\geq0$, depending on $r_{0}\in(0,1)$ but independent of $m\in[-1+r_{0},1-r_{0}]$, such that
	\begin{align}
	f_{0}(r)(r-m)\geq \widetilde{c}_{r_{0}}|f_{0}(r)|-\widetilde{c}_{r_{0}}',\quad \forall\,r\in(-1,1),\quad\forall\,m\in[-1+r_{0},1-r_{0}].\label{uni-cons}
	\end{align}
	Then, by \eqref{compact} and \eqref{uni-cons}, it holds
	\begin{align*}
		I_{2}\geq \widetilde{c}_{r_{0}}\Theta_{k}\Vert f_{0}(\varphi_{\Theta_{k}})\Vert_{L^{1}(\Omega)}-\widetilde{c}_{r_{0}}'.
	\end{align*}
The other terms on the right-hand side of \eqref{lw1} can be controlled by usage of H\"older's inequality. Then, by \eqref{kappauni1}, we can conclude that
	\begin{align*}
		\Vert\Theta_{k} f_{0}(\varphi_{\Theta_{k}})\Vert_{L^{2}(0,T;L^{1}(\Omega))}\leq C(1+\Vert\partial_{\mathbf{n}}\varphi_{\Theta_{k}}\Vert_{L^{2}(0,T;H_{\Gamma})}).
	\end{align*}
	Testing \eqref{eq2.11} by $\psi_{\Theta_{k}}-\langle\psi_{\Theta_{k}}\rangle_{\Gamma}$, by a similar way, we obtain the second estimate of \eqref{kappauni2}. Integrating \eqref{eq2.8} over $\Omega$ and \eqref{eq2.11} over $\Gamma$, we have
	\begin{align}
		\Big\|\int_{\Omega}\mu_{\Theta_{k}}\,\mathrm{d}x\Big\|_{L^{2}(0,T)}+\Big\|\int_{\Gamma}\theta_{\Theta_{k}}\,\mathrm{d}S\Big\|_{L^{2}(0,T)}\leq C\big(1+\Vert\partial_{\mathbf{n}}\varphi_{\Theta_{k}}\Vert_{L^{2}(0,T;H_{\Gamma})}\big).\notag
	\end{align}
	Then, by Poincar\'e-Wirtinger inequalities \eqref{bulkpoin}, \eqref{surfacepoin} and \eqref{kappauni1}, we can conclude \eqref{kappauni3}. Therefore, we complete the proof of Lemma \ref{kappa2}.
\end{proof}

\begin{lemma}
	There exists a constant $C>0$, independent of $k\in\mathbb{Z}^{+}$, such that
	\begin{align}
		&\Vert\Theta_{k} f_{0}(\varphi_{\Theta_{k}})\Vert_{L^{2}(0,T;H)}+\Vert\Theta_{k} f_{0}(\psi_{\Theta_{k}})\Vert_{L^{2}(0,T;H_{\Gamma})}+\Vert\boldsymbol{\mu}_{\Theta_{k}}\Vert_{L^{2}(0,T;\mathcal{H}^{1})}\notag\\
		&\qquad+\Vert\boldsymbol{\varphi}_{\Theta_{k}}\Vert_{L^{2}(0,T;\mathcal{H}_{K}^{2})}
		+\Vert\partial_{t}\varphi_{\Theta_{k}}\Vert_{L^{2}(0,T;V_{0}^{\ast})}+\Vert\partial_{t}\psi_{\Theta_{k}}\Vert_{L^{2}(0,T;V_{\Gamma,0}^{\ast})}\leq C.\label{kappauni7}
	\end{align}
\end{lemma}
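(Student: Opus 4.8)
The plan is to treat the two time-derivative bounds as essentially free and to reduce everything else to controlling the single quantity $N:=\|\partial_{\mathbf n}\varphi_{\Theta_k}\|_{L^2(0,T;H_\Gamma)}$, which is the only term appearing on the right-hand sides of \eqref{kappauni2} and \eqref{kappauni3} that is not yet known to be uniform in $k$. Throughout I drop the subscript $\Theta_k$ and recall that here $\beta=\beta_\Gamma=\tfrac{\Theta_k}{2}f_0$, so the asserted bounds on $\Theta_k f_0(\varphi)$, $\Theta_k f_0(\psi)$ in $\mathcal L^2$ are equivalent to bounds on $\beta(\varphi)$, $\beta_\Gamma(\psi)$. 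The time derivatives come at once from the weak formulation: testing \eqref{eq2.7}, \eqref{eq2.10} shows $\mathcal N_\Omega(\partial_t\varphi)=-\mathbf P_\Omega\mu$ and $\mathcal N_\Gamma(\partial_t\psi)=-\mathbf P_\Gamma\theta$, whence $\|\partial_t\varphi\|_{V_0^*}=\|\nabla\mu\|_H$ and $\|\partial_t\psi\|_{V_{\Gamma,0}^*}=\|\nabla_\Gamma\theta\|_{H_\Gamma}$, both controlled in $L^2(0,T)$ by \eqref{kappauni1} uniformly in $k$.

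For the $L^2$-in-space bound on $\beta$ I would run the standard higher-integrability test (after the truncation/approximation used in Lemma \ref{regh2}): multiply \eqref{eq2.8} by $\beta(\varphi)$ and integrate over $\Omega$, multiply \eqref{eq2.11} by $\beta_\Gamma(\psi)$ and integrate over $\Gamma$, integrate by parts and add. The Dirichlet contributions $\int_\Omega\beta'(\varphi)|\nabla\varphi|^2+\int_\Gamma\beta_\Gamma'(\psi)|\nabla_\Gamma\psi|^2$ are nonnegative by $\mathbf{(A1)}$, and the boundary contribution $\int_\Gamma\partial_{\mathbf n}\varphi\,(\beta_\Gamma(\psi)-\beta(\varphi|_\Gamma))\,\mathrm dS$ vanishes when $K=0$ (since then $\varphi|_\Gamma=\psi$ and $\beta=\beta_\Gamma$) and equals $\tfrac1K\int_\Gamma(\psi-\varphi)(\beta(\psi)-\beta(\varphi))\,\mathrm dS\ge0$ when $K>0$ by monotonicity and \eqref{eq2.9}; this is exactly the sign argument of Lemma \ref{regh2} and of the proof of Theorem \ref{eventual}. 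Discarding these nonnegative terms and splitting $\mu=\mathbf P_\Omega\mu+\langle\mu\rangle_\Omega$, the oscillatory part is bounded by $\|\nabla\mu\|_H\,\|\beta(\varphi)\|_H$ via \eqref{bulkpoin} and \eqref{kappauni1}, while the mean part $|\langle\mu\rangle_\Omega|\,\|\beta(\varphi)\|_{L^1(\Omega)}$ is controlled in $L^2(0,T)$ through \eqref{kappauni3} and \eqref{kappauni2}. Writing $A:=\|\beta(\varphi)\|_{L^2(0,T;H)}^2+\|\beta_\Gamma(\psi)\|_{L^2(0,T;H_\Gamma)}^2$ and absorbing the resulting $\sqrt A$ by Young's inequality, this yields the \emph{conditional} estimate $A\le C(1+N^2)$.

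The closing step, which I expect to be the main obstacle, is to remove the dependence on $N$. Rewriting the system as the coupled elliptic problem $-\Delta\varphi=\mu-\beta(\varphi)-\pi(\varphi)=:h$ in $\Omega$, with \eqref{eq2.9} and $\partial_{\mathbf n}\varphi-\Delta_\Gamma\psi+\psi=\theta-\beta_\Gamma(\psi)-\pi_\Gamma(\psi)+\psi=:h_\Gamma$ on $\Gamma$, the elliptic regularity theory (cf. \cite[Theorem 3.3]{KL} and the proof of Lemma \ref{higher-order}) gives, with $P:=\|\boldsymbol\varphi\|_{L^2(0,T;\mathcal H^2_K)}$, the bound $P^2\le C(\|\mu\|_{L^2(0,T;V)}^2+\|\theta\|_{L^2(0,T;V_\Gamma)}^2+A+C)\le C(1+N^2)$ after invoking \eqref{kappauni3}. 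When $K>0$ this closes immediately, since \eqref{eq2.9} and \eqref{kappauni1} give $N=\tfrac1K\|\psi-\varphi\|_{L^2(0,T;H_\Gamma)}\le C$. For $K=0$ (and uniformly in $K$) the decisive point is the trace–interpolation estimate $\|\partial_{\mathbf n}\varphi\|_{H_\Gamma}\le C\|\varphi\|_{H^{s}(\Omega)}\le C\|\varphi\|_{H^2(\Omega)}^{\lambda}\|\varphi\|_{H^1(\Omega)}^{1-\lambda}$ for some $s\in(3/2,2)$ with $\lambda=s-1\in(1/2,1)$, which together with \eqref{kappauni1} and Hölder's inequality in time gives the \emph{sublinear} bound $N^2\le C\,P^{2\lambda}$ with $2\lambda<2$. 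Substituting into $P^2\le C(1+N^2)$ produces $P^2\le C(1+P^{2\lambda})$, which forces $P$ to be bounded by an absolute constant; this square-root-type gain is precisely what breaks the otherwise circular dependence between $N$ and the $\mathcal H^2$ norm.

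Once $N\le C$ is established uniformly in $k$, the remaining assertions follow directly: $A\le C$ gives the bounds on $\Theta_k f_0(\varphi)$ and $\Theta_k f_0(\psi)$; \eqref{kappauni3} gives $\|\boldsymbol\mu\|_{L^2(0,T;\mathcal H^1)}\le C(1+N)\le C$; the elliptic estimate gives $\|\boldsymbol\varphi\|_{L^2(0,T;\mathcal H^2_K)}\le C$; and the two time-derivative bounds were obtained in the first step. All constants depend only on $\Omega$, $\Gamma$, $K$, $T$, $\Theta_c$ and the uniform data bounds of the statement, but not on $k$, which establishes \eqref{kappauni7}.
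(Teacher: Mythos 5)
Your proposal is correct and follows essentially the same route as the paper's proof: test the chemical potential equations with the singular terms and use the monotonicity of $f_{0}$ together with the boundary condition \eqref{eq2.9} to discard the bulk--surface coupling term, invoke the conditional bounds \eqref{kappauni2}--\eqref{kappauni3} and elliptic regularity, and finally break the circular dependence on $N=\Vert\partial_{\mathbf{n}}\varphi_{\Theta_{k}}\Vert_{L^{2}(0,T;H_{\Gamma})}$ by the trace theorem plus interpolation between $H^{2}(\Omega)$ and $H^{1}(\Omega)$. The only cosmetic difference lies in that last step, where the paper uses Ehrling's lemma (additive interpolation, Lemma \ref{Ehrling}) to absorb $\tfrac{1}{2}\Vert\boldsymbol{\varphi}_{\Theta_{k}}\Vert_{L^{2}(0,T;\mathcal{H}_{K}^{2})}$, while you use multiplicative interpolation and the sublinear inequality $P^{2}\leq C\bigl(1+P^{2\lambda}\bigr)$ with $2\lambda<2$; the two devices are equivalent (both require the a priori finiteness of $P$ for each fixed $k$, which Proposition \ref{weakexist} provides).
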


\begin{proof}
We just make estimates formally and this process can be made rigorously by a cut-off for $\varphi_{\Theta_{k}}$ and $\psi_{\Theta_{k}}$ (see Lemma \ref{regh2}). Testing \eqref{eq2.8} by $\Theta_{k} f_{0}(\varphi_{\Theta_{k}})$, we obtain
	\begin{align*}
		\int_{\Omega}\Theta_{k}^{2}|f_{0}(\varphi_{\Theta_{k}})|^{2}\,\mathrm{d}x
		&=\int_{\Omega}\big(\mu_{\Theta_{k}}+\Theta_{c}\varphi_{\Theta_{k}}\big)\Theta_{k}\ f_{0}(\varphi_{\Theta_{k}})\,\mathrm{d}x\\
		&\quad-\int_{\Omega}\Theta_{k} f_{0}'(\varphi_{\Theta_{k}})|\nabla\varphi_{\Theta_{k}}|^{2}\,\mathrm{d}x+\int_{\Gamma}\Theta_{k} f_{0}(\varphi_{\Theta_{k}})\partial_{\mathbf{n}}\varphi_{\Theta_{k}}\,\mathrm{d}S.
	\end{align*}
	Testing \eqref{eq2.11} by $\Theta_{k} f_{0}(\psi_{\Theta_{k}})$, it holds
	\begin{align*}
		\int_{\Gamma}\Theta_{k}^{2}|f_{0}(\psi_{\Theta_{k}})|^{2}\,\mathrm{d}S&=\int_{\Gamma}\big(\theta_{\Theta_{k}}
		+\Theta_{c}\psi_{\Theta_{k}}\big)\Theta_{k} f_{0}(\psi_{\Theta_{k}})\,\mathrm{d}S\\
		&\quad-\int_{\Gamma}\Theta_{k} f_{0}'(\psi_{\Theta_{k}})|\nabla_{\Gamma}\psi_{\Theta_{k}}|^{2}\,\mathrm{d}S-\int_{\Gamma}\Theta_{k} f_{0}(\psi_{\Theta_{k}})\partial_{\mathbf{n}}\varphi_{\Theta_{k}}\,\mathrm{d}S.
	\end{align*}
		Adding the above two equalities together, using Young's inequality and the monotonicity of $f_{0}$, we obtain
	\begin{align*}
		&\int_{\Omega}\Theta_{k}^{2}|f_{0}(\varphi_{\Theta_{k}})|^{2}\,\mathrm{d}x
		+\int_{\Gamma}\Theta_{k}^{2}|f_{0}(\psi_{\Theta_{k}})|^{2}\,\mathrm{d}S\\
		&\quad\leq C\big(\Vert\mu_{\Theta_{k}}\Vert_{H}^{2}+\Vert\theta_{\Theta_{k}}\Vert_{H_{\Gamma}}^{2}+1\big)
		+\Theta_{k}\int_{\Gamma}(f_{0}(\varphi_{\Theta_{k}})-f_{0}(\psi_{\Theta_{k}}))\partial_{\mathbf{n}}\varphi_{\Theta_{k}}\,\mathrm{d}S\\
		&\quad= C\big(\Vert\mu_{\Theta_{k}}\Vert_{H}^{2}+\Vert\theta_{\Theta_{k}}\Vert_{H_{\Gamma}}^{2}+1\big)
		+\chi(K)\Theta_{k}\int_{\Gamma}(f_{0}(\varphi_{\Theta_{k}})-f_{0}(\psi_{\Theta_{k}}))(\psi_{\Theta_{k}}-\varphi_{\Theta_{k}})\,\mathrm{d}S\\
		&\quad\leq C\big(\Vert\mu_{\Theta_{k}}\Vert_{H}^{2}+\Vert\theta_{\Theta_{k}}\Vert_{H_{\Gamma}}^{2}+1\big),
	\end{align*}
	which, together with \eqref{kappauni3}, implies that
	\begin{align}
	\Vert \Theta_{k}f_{0}(\varphi_{\Theta_{k}})\Vert_{L^{2}(0,T;H)}+\Vert\Theta_{k} f_{0}(\psi_{\Theta_{k}})\Vert_{L^{2}(0,T;H_{\Gamma})}\leq C\big(1+\Vert\partial_{\mathbf{n}}\varphi_{\Theta_{k}}\Vert_{L^{2}(0,T;H_{\Gamma})}\big).\notag
	\end{align}
	Then, regarding \eqref{eq2.8}--\eqref{eq2.11} as a bulk-surface coupled elliptic system for $\boldsymbol{\varphi}_{\Theta_{k}}$, by the elliptic regularity theorem (cf. \cite[Theorem 3.3]{KL}), trace theorem, Ehrling's lemma (see Lemma \ref{Ehrling}) and \eqref{kappauni1}, it holds
	\begin{align*}
		\Vert\boldsymbol{\varphi}_{\Theta_{k}}\Vert_{L^{2}(0,T;\mathcal{H}_{K}^{2})}&\leq C\big(1+\Vert\partial_{\mathbf{n}}\varphi_{\Theta_{k}}\Vert_{L^{2}(0,T;H_{\Gamma})}\big)\\
		&\leq C\big(1+\Vert\varphi_{\Theta_{k}}\Vert_{L^{2}(0,T;H^{r}(\Omega))}\big)\\
		&\leq\frac{1}{2}	\Vert\boldsymbol{\varphi}_{\Theta_{k}}\Vert_{L^{2}(0,T;\mathcal{H}_{K}^{2})}+C,
	\end{align*}
	for some $r\in(3/2,2)$. Then, we can conclude the first four estimates in \eqref{kappauni7}. Finally, using \eqref{eq2.7}, \eqref{eq2.10}, we have
	\begin{align*}
		\Vert\partial_{t}\varphi_{\Theta_{k}}\Vert_{V_{0}^{\ast}}= \Vert\nabla\mu_{\Theta_{k}}\Vert_{H},\quad\Vert\partial_{t}\psi_{\Theta_{k}}\Vert_{V_{\Gamma,0}^{\ast}}= \Vert\nabla_{\Gamma}\theta_{\Theta_{k}}\Vert_{H_{\Gamma}},
	\end{align*}
	which, together with \eqref{kappauni1}, imply the last two estimates in \eqref{kappauni7}.
\end{proof}

\begin{proof}[\textbf{Proof of Theorem \ref{doubleobstacle}}]
	Thanks to the Banach-Alaoglu theorem, Aubin-Lions-Simon lemma (see Lemma \ref{ALS}) and uniform estimates \eqref{kappauni1}, \eqref{kappauni7}, we conclude that there exist functions $(\widetilde{\boldsymbol{\varphi}},\widetilde{\boldsymbol{\mu}},\widetilde{\boldsymbol{\xi}})$ and a subsequence of $\big\{(\boldsymbol{\varphi}_{\Theta_{k}},\boldsymbol{\mu}_{\Theta_{k}})\big\}_{k\in\mathbb{Z}^{+}}$ (not relabelled), such that the convergence results \eqref{conver1}--\eqref{conver6} hold. By \eqref{eq2.8}--\eqref{eq2.11}, it holds
	\begin{align}
	\int_{\Omega}\mu_{\Theta_{k}}\zeta\,\mathrm{d}x+\int_{\Gamma}\theta_{\Theta_{k}}\zeta_{\Gamma}\,\mathrm{d}S&=\int_{\Omega}\nabla\varphi_{\Theta_{k}}\cdot\nabla \zeta\,\mathrm{d}x+\int_{\Omega}\big(f_{0}(\varphi_{\Theta_{k}})-\Theta_{c}\varphi_{\Theta_{k}}\big)\zeta\,\mathrm{d}x\notag\\
	&\quad+\int_{\Gamma}\nabla_{\Gamma}\psi_{\Theta_{k}}\cdot\nabla_{\Gamma} \zeta_{\Gamma}\,\mathrm{d}S+\int_{\Gamma}\big(f_{0}(\psi_{\Theta_{k}})-\Theta_{c}\psi_{\Theta_{k}}\big)\zeta_{\Gamma}\,\mathrm{d}S\notag\\
	&\quad+\chi(K)\int_{\Gamma}(\psi_{\Theta_{k}}-\varphi_{\Theta_{k}})(\zeta_{\Gamma}-\zeta)\,\mathrm{d}S\label{muthetaweak}
	\end{align}
	for any $(\zeta,\zeta_{\Gamma})\in\mathcal{H}_{K}^{1}$. Passing to the limit $k\to+\infty$ in \eqref{eq2.7}, \eqref{eq2.10} and \eqref{muthetaweak}, we obtain
	\begin{align}
	&\langle\partial_{t}\widetilde{\varphi},z\rangle_{V',V}
	+\int_{\Omega}\nabla\widetilde{\mu}\cdot\nabla z\,\mathrm{d}x=0,&&\forall\,z\in V,\notag\\
    &\langle\partial_{t}\widetilde{\psi},z_{\Gamma}\rangle_{V_{\Gamma}',V_{\Gamma}}
	+\int_{\Gamma}\nabla_{\Gamma}\widetilde{\theta}\cdot\nabla_{\Gamma}z_{\Gamma}\,\mathrm{d}S=0,&&\forall\,z_{\Gamma}\in V_{\Gamma},\notag
		\end{align}
		for almost all $t\in(0,T)$, and
	\begin{align}
	\int_{\Omega}\widetilde{\mu}\zeta\,\mathrm{d}x+\int_{\Gamma}\widetilde{\theta}\zeta_{\Gamma}\,\mathrm{d}S&=\int_{\Omega}\nabla\widetilde{\varphi}\cdot\nabla \zeta\,\mathrm{d}x+\int_{\Omega}\big(\widetilde{\xi}-\Theta_{c}\widetilde{\varphi}\big)\zeta\,\mathrm{d}x\notag\\
	&\quad+\int_{\Gamma}\nabla_{\Gamma}\widetilde{\psi}\cdot\nabla_{\Gamma} \zeta_{\Gamma}\,\mathrm{d}S+\int_{\Gamma}\big(\widetilde{\xi}_{\Gamma}-\Theta_{c}\widetilde{\psi}\big)\zeta_{\Gamma}\,\mathrm{d}S\notag\\
	&\quad+\chi(K)\int_{\Gamma}(\widetilde{\psi}-\widetilde{\varphi})(\zeta_{\Gamma}-\zeta)\,\mathrm{d}S,\label{muthetaweak1}
	\end{align}
	for almost all $t\in(0,T)$ and all $(\zeta,\zeta_{\Gamma})\in\mathcal{H}_{K}^{1}$. Since $(\zeta,\zeta_{\Gamma})\in\mathcal{H}_{K}^{1}$ is arbitrary, we can easily derive from \eqref{muthetaweak1} that
	\begin{align}
			&\widetilde{\mu}=-\Delta\widetilde{\varphi}+\widetilde{\xi}-\Theta_{c}\widetilde{\varphi},&&\text{a.e. in }Q_{T},\notag\\
			&K\partial_{\mathbf{n}}\widetilde{\varphi}=\widetilde{\psi}-\widetilde{\varphi},&&\text{a.e. on }\Sigma_{T},\notag\\
			&\widetilde{\theta}=\partial_{\mathbf{n}}\widetilde{\varphi}-\Delta_{\Gamma}\widetilde{\psi}
			+\widetilde{\xi}_{\Gamma}-\Theta_{c}\widetilde{\psi},&&\text{a.e. on }\Sigma_{T}.\notag
	\end{align}
		Furthermore, by the strong convergences \eqref{conver1} and \eqref{conver4}, together with $\lim_{k\to+\infty}\|\boldsymbol{\varphi}_{0,k}-\boldsymbol{\varphi}_{0}\|_{\mathcal{H}^{1}}=0$, we can conclude that the initial conditions hold
		\begin{align*}
	\widetilde{\varphi}|_{t=0}=\varphi_{0}\ \text{ a.e. in }\Omega,\qquad\widetilde{\psi}|_{t=0}=\psi_{0}\ \text{ a.e. on }\Gamma.
		\end{align*}
		
		Now, following the idea in \cite{Abels11}, we prove that
		\begin{align*}
			\widetilde{\xi}\in \partial I_{[-1,1]}(\widetilde{\varphi})\quad\text{a.e. in }Q_{T}\quad\text{and}\quad\widetilde{\xi}_{\Gamma}\in \partial I_{[-1,1]}(\widetilde{\psi})\quad\text{a.e. on }\Sigma_{T}.
		\end{align*}
		Due to \eqref{conver1}, \eqref{conver4} and
		\begin{align*}
			\Vert f\Vert_{L^{\infty}}\leq C\Vert f\Vert_{L^{2}}^{1-d/4}\,\Vert f\Vert_{H^{2}}^{d/4},
		\end{align*}
		we obtain
		\begin{align*}
			\begin{array}{ll}
			\varphi_{\Theta_{k}}\rightarrow\widetilde{\varphi}&\qquad\text{strongly in }L^{2}(0,T;C(\overline{\Omega}))\\
			\psi_{\Theta_{k}}\rightarrow\widetilde{\psi}&\qquad\text{strongly in }L^{2}(0,T;C(\Gamma))
			\end{array}
		\end{align*}
		as $k\to+\infty$. Therefore, up to a suitable subsequence, there hold
		\begin{align}
			&\varphi_{\Theta_{k}}(t)\rightarrow\widetilde{\varphi}(t)\quad\text{in }C(\overline{\Omega}),\notag\\
			&\psi_{\Theta_{k}}(t)\rightarrow\widetilde{\psi}(t)\quad\text{in }C(\Gamma),\notag
		\end{align}
		as $k\to+\infty$, for almost all $t\in(0,T)$. Therefore
		\begin{align*}
			\Theta_{k} f_{0}(\varphi_{\Theta_{k}}(x,t))\rightarrow0=\widetilde{\xi}(x,t)\quad\text{a.e. in }\big\{(x,t)\in Q_{T}:\,\widetilde{\varphi}(x,t)\in(-1,1)\big\}.
		\end{align*}
		On the other hand, if $\widetilde{\varphi}(x,t)=-1$ for some $x\in\overline{\Omega}$ and some $t\in(0,T)$ such that $\varphi_{\Theta_{k}}(t)\rightarrow\widetilde{\varphi}(t)$ strongly in $C(\overline{\Omega})$, then $f_{0}(\varphi_{\Theta_{k}}(x,t))\leq0$ for sufficiently large $k$ and therefore $\widetilde{\xi}(x,t)\leq0$, i.e., $\widetilde{\xi}(x,t)\in\partial I_{[-1,1]}(-1)$, almost everywhere on $\big\{\widetilde{\varphi}(x,t)=-1\big\}$. By the same argument, $\widetilde{\xi}(x,t)\geq0$, i.e., $\widetilde{\xi}(x,t)\in\partial I_{[-1,1]}(1)$, almost everywhere on $\big\{\widetilde{\varphi}(x,t)=1\big\}$. Hence, we obtain
		\begin{align*}
			\widetilde{\xi}\in\partial I_{[-1,1]}(\widetilde{\varphi})\quad\text{a.e. in }Q_{T}.
		\end{align*}
		Similarly, we can conclude that
		\begin{align*}
			\widetilde{\xi}_{\Gamma}\in\partial I_{[-1,1]}(\widetilde{\psi})\quad\text{a.e. on }\Sigma_{T}.
		\end{align*}
		Therefore, we complete the proof of Theorem \ref{doubleobstacle}.
\end{proof}

\appendix
\section{Appendix}
\setcounter{equation}{0}
\subsection{Useful tools}
\noindent We report some technical lemmas that have been used
in our analysis. First, we recall the compactness lemma of Aubin-Lions-Simon
type (see e.g., \cite{Simon})

\begin{lemma}
\label{ALS} Let $X_{0} \overset{c}{\hookrightarrow } X_{1}\subset X_{2}$
where $X_{j}$ are (real) Banach spaces ($j=0,1,2$). Let $1<p\leq +\infty $, $%
1\leq q\leq+ \infty ~$and $I$ be a bounded subinterval of $\mathbb{R}$. Then,
the sets
\begin{equation*}
\left\{ \varphi \in L^{p}\left( I;X_{0}\right) :\partial _{t}\varphi \in
L^{q}\left( I;X_{2}\right) \right\} \overset{c}{\hookrightarrow }
L^{p}\left( I;X_{1}\right),\quad \text{ if }1<p<+\infty,
\end{equation*}
and
\begin{equation*}
\left\{ \varphi \in L^{p}\left( I;X_{0}\right) :\partial _{t}\varphi \in
L^{q}\left( I;X_{2}\right) \right\} \overset{c}{\hookrightarrow } C\left(
I;X_{1}\right),\quad \text{ if }p=+\infty ,\text{ }q>1.
\end{equation*}
\end{lemma}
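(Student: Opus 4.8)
This is the classical Aubin--Lions--Simon compactness lemma, and the plan is to follow the strategy of Simon \cite{Simon}, reducing the statement to two ingredients: the interpolation inequality furnished by the compact embedding $X_{0}\overset{c}{\hookrightarrow}X_{1}$, and a Fr\'echet--Kolmogorov--Riesz type characterization of relative compactness for Bochner-valued functions. Throughout, let $F$ denote a bounded subset of $W:=\{\varphi\in L^{p}(I;X_{0}):\partial_{t}\varphi\in L^{q}(I;X_{2})\}$, and write $\tau_{h}\varphi(t):=\varphi(t+h)$ together with $I_{h}$ for the interval $I$ shortened by $h>0$.

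First I would record the quantitative consequence of the compact embedding, namely Ehrling's lemma: since $X_{0}\overset{c}{\hookrightarrow}X_{1}\subset X_{2}$, for every $\eta>0$ there exists a constant $C_{\eta}>0$ such that $\|v\|_{X_{1}}\leq \eta\|v\|_{X_{0}}+C_{\eta}\|v\|_{X_{2}}$ for all $v\in X_{0}$. This is the mechanism that converts smallness measured in the weak norm $X_{2}$, where the time derivative is controlled, into smallness in the intermediate norm $X_{1}$, at the cost of a term absorbed by the uniform $X_{0}$-bound. Alongside this I would invoke Simon's Bochner-valued compactness criterion: a family bounded in $L^{p}(I;X_{1})$ is relatively compact provided its time-translates are uniformly equicontinuous, i.e.\ $\sup_{\varphi\in F}\|\tau_{h}\varphi-\varphi\|_{L^{p}(I_{h};X_{1})}\to 0$ as $h\to 0^{+}$, the required pointwise precompactness being supplied automatically by the $L^{p}(I;X_{0})$-bound together with $X_{0}\overset{c}{\hookrightarrow}X_{1}$ (the averaged values $\int_{t_{1}}^{t_{2}}\varphi\,\mathrm{d}t$ are bounded in $X_{0}$, hence precompact in $X_{1}$).

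The core estimate is the control of the time-translates. For small $h>0$ one has $\tau_{h}\varphi(t)-\varphi(t)=\int_{t}^{t+h}\partial_{t}\varphi(s)\,\mathrm{d}s$ in $X_{2}$, so H\"older's inequality in time yields $\|\tau_{h}\varphi-\varphi\|_{L^{q}(I_{h};X_{2})}\leq C\,h^{1-1/q}\|\partial_{t}\varphi\|_{L^{q}(I;X_{2})}$, which tends to $0$ uniformly over the bounded family $F$. Applying Ehrling's lemma pointwise in time and integrating gives, schematically, $\|\tau_{h}\varphi-\varphi\|_{L^{p}(I_{h};X_{1})}\leq \eta\,\|\tau_{h}\varphi-\varphi\|_{L^{p}(I_{h};X_{0})}+C_{\eta}\,\|\tau_{h}\varphi-\varphi\|_{L^{p}(I_{h};X_{2})}$; the first term is bounded by $2\eta\sup_{F}\|\varphi\|_{L^{p}(I;X_{0})}$ and hence uniformly small once $\eta$ is fixed small, while for that fixed $\eta$ the second term vanishes as $h\to 0^{+}$. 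Letting first $h\to 0^{+}$ and then $\eta\to 0^{+}$ yields the equicontinuity hypothesis and, by the criterion, relative compactness of $F$ in $L^{p}(I;X_{1})$ for $1<p<+\infty$. In the endpoint case $p=+\infty$, the assumption $q>1$ is precisely what makes $1-1/q>0$, so the translation estimate upgrades to uniform H\"older continuity in time valued in $X_{2}$; combined with Ehrling's lemma and the pointwise precompactness, an Arzel\`a--Ascoli argument then delivers relative compactness in $C(I;X_{1})$.

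I expect the main obstacle to be the exponent mismatch when $q<p$, in which case $L^{q}(I;X_{2})$-smallness of the translates does not transfer to $L^{p}(I;X_{2})$ by a single H\"older step, and the naive bound of the previous paragraph must be justified more carefully. This is exactly the delicate point in Simon's refinement of the Aubin--Lions lemma: it is handled by combining the Ehrling inequality with a truncation/covering argument in time that localizes the interpolation, rather than applying it globally. Once this step is secured, the remaining verifications, namely boundedness and pointwise precompactness, are routine consequences of the hypotheses.
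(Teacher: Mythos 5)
You should first be aware that the paper offers no proof of Lemma \ref{ALS} at all: it is quoted as a known result from Simon \cite{Simon}, so the only benchmark is the classical argument you are reconstructing. Your overall strategy --- Ehrling's lemma (Lemma \ref{Ehrling}) combined with Simon's translation criterion for relative compactness in $L^{p}(I;X_{1})$, with precompactness of the averages $\int_{t_{1}}^{t_{2}}\varphi\,\mathrm{d}t$ supplied by the $L^{p}(I;X_{0})$-bound --- is indeed the strategy of \cite{Simon}, and your treatment of the endpoint case $p=+\infty$, $q>1$ (H\"older continuity of translates in $X_{2}$, Ehrling, Arzel\`a--Ascoli) is correct in outline, modulo the standard extra step showing that $\{\varphi(t):\varphi\in F\}$ is precompact in $X_{1}$ for each fixed $t$.

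The genuine gap is exactly the point you flag at the end, and your proposed repair does not close it. For $1<p<+\infty$ the lemma allows any $q\geq 1$; when $q<p$, your key estimate $\|\tau_{h}\varphi-\varphi\|_{L^{p}(I_{h};X_{1})}\leq \eta\|\tau_{h}\varphi-\varphi\|_{L^{p}(I_{h};X_{0})}+C_{\eta}\|\tau_{h}\varphi-\varphi\|_{L^{p}(I_{h};X_{2})}$ cannot be closed, because the translation estimate only makes $\|\tau_{h}\varphi-\varphi\|_{L^{q}(I_{h};X_{2})}$ small (of order $h$, incidentally, not $h^{1-1/q}$; the exponent $1-1/q$ pertains to the $L^{\infty}(I_{h};X_{2})$ norm), and on a bounded interval $L^{q}$-smallness does not imply $L^{p}$-smallness when $q<p$; interpolating against the uniform $L^{p}(I_{h};X_{2})$-bound only yields smallness in $L^{r}(I_{h};X_{2})$ for $r<p$, hence compactness in $L^{r}(I;X_{1})$ for $r<p$, not in $L^{p}(I;X_{1})$ itself. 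A ``truncation/covering argument in time'' is not the mechanism used in \cite{Simon} and, as written, is not an argument. The actual completion is a two-step upgrade: (i) run your scheme with time exponent $1$ to obtain relative compactness of $F$ in $L^{1}(I;X_{1})$; (ii) observe that $\varphi\in L^{p}(I;X_{0})$ together with $\partial_{t}\varphi\in L^{q}(I;X_{2})\subset L^{1}(I;X_{2})$ gives a uniform bound in $L^{\infty}(I;X_{2})$ (since $W^{1,1}(I;X_{2})\hookrightarrow L^{\infty}(I;X_{2})$); then, along a subsequence that is Cauchy in $L^{1}(I;X_{1})$, the scalar interpolation $\|g\|_{L^{p}}\leq\|g\|_{L^{1}}^{1/p}\|g\|_{L^{\infty}}^{1-1/p}$ applied with values in $X_{2}$ makes $\|\varphi_{n}-\varphi_{m}\|_{L^{p}(I;X_{2})}$ small, and one final application of Lemma \ref{Ehrling}, with the $\eta$-term absorbed by the uniform $L^{p}(I;X_{0})$-bound, shows the subsequence is Cauchy in $L^{p}(I;X_{1})$. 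Without step (ii), the case $q<p$ --- which the statement explicitly covers --- remains unproven.
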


\begin{lemma}[Ehrling's Lemma]
\label{Ehrling}
Let $B_{0}$, $B_{1}$, $B_{2}$ be three Banach spaces. Assume that
  $B_{0}\subset B$ with compact injection and that $B\subset B_1$ with continuous injection. Then, for each $\epsilon>0$, there exists a positive constant $C_{\epsilon}$ depending on $\epsilon$ such that$$\Vert z\Vert_{B}\leq\epsilon\Vert z\Vert_{B_{0}}+C_{\epsilon}\Vert z\Vert_{B_{1}},\quad \forall\,z\in B_{0}.$$
\end{lemma}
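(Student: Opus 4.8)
The plan is to argue by contradiction, exploiting the compactness of the injection $B_{0}\hookrightarrow B$ together with the continuity of $B\hookrightarrow B_{1}$ (here $B$ denotes the intermediate space appearing in the asserted inequality). Suppose the conclusion fails. Then there is a fixed $\epsilon_{0}>0$ for which no admissible constant exists; in particular, taking the candidate value $n$ in place of $C_\epsilon$, for every $n\in\mathbb{Z}^{+}$ one can select $z_{n}\in B_{0}\setminus\{0\}$ such that
\begin{align*}
\|z_{n}\|_{B}>\epsilon_{0}\|z_{n}\|_{B_{0}}+n\|z_{n}\|_{B_{1}}.
\end{align*}
The vector $z_n$ is necessarily nonzero, since otherwise both sides would vanish. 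First I would normalize by setting $w_{n}:=z_{n}/\|z_{n}\|_{B_{0}}$; homogeneity of all three norms then gives $\|w_{n}\|_{B_{0}}=1$ together with
\begin{align*}
\|w_{n}\|_{B}>\epsilon_{0}+n\|w_{n}\|_{B_{1}}\geq\epsilon_{0},\qquad\forall\,n\in\mathbb{Z}^{+}.
\end{align*}

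Next I would note that the compact injection $B_{0}\hookrightarrow B$ is \emph{a fortiori} continuous, so there is a constant $C_{0}>0$ with $\|w_{n}\|_{B}\leq C_{0}\|w_{n}\|_{B_{0}}=C_{0}$; thus the sequence $(\|w_{n}\|_{B})_{n}$ is bounded. Feeding this back into the displayed lower bound yields $n\|w_{n}\|_{B_{1}}<\|w_{n}\|_{B}\leq C_{0}$, whence $\|w_{n}\|_{B_{1}}\leq C_{0}/n\to0$, that is, $w_{n}\to0$ strongly in $B_{1}$.

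The key step, and the only one requiring care, is the compactness extraction. Since $(w_{n})$ is bounded in $B_{0}$ and the injection $B_{0}\hookrightarrow B$ is compact, a subsequence (not relabeled) converges strongly in $B$, say $w_{n}\to w$ in $B$. The continuous injection $B\hookrightarrow B_{1}$ then forces $w_{n}\to w$ in $B_{1}$ as well; comparing with $w_{n}\to0$ in $B_{1}$ and invoking uniqueness of limits gives $w=0$. Consequently $\|w_{n}\|_{B}\to\|w\|_{B}=0$, which contradicts the lower bound $\|w_{n}\|_{B}\geq\epsilon_{0}>0$ obtained above. This contradiction establishes the inequality, with the constant $C_{\epsilon}$ depending on $\epsilon$ and on the two embedding constants. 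The entire difficulty is concentrated in reconciling the two modes of convergence in the extraction step: compactness of $B_{0}\hookrightarrow B$ supplies a strong limit in $B$, while continuity of $B\hookrightarrow B_{1}$ identifies that limit as the (zero) limit already known in $B_{1}$.
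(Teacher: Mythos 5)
Your proof is correct and complete: the contradiction argument via normalization, boundedness of the compact (hence continuous) injection $B_{0}\hookrightarrow B$, extraction of a strongly $B$-convergent subsequence, and identification of the limit as $0$ through the continuous injection $B\hookrightarrow B_{1}$ is exactly the canonical proof of Ehrling's lemma. The paper itself states this lemma without proof, as a standard tool reported in the appendix, so there is nothing to compare against; your argument supplies precisely the standard justification that the paper implicitly relies on (note only that the paper's statement has a harmless typo, listing spaces $B_{0},B_{1},B_{2}$ but using $B$ for the intermediate space, which you correctly resolved).
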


\subsection{Proof of the extended \L ojasiewicz-Simon inequality}

The proof of Lemma \ref{LS} is an extension of \cite{AW} for the Cahn-Hilliard equation with singular potential and homogeneous Neumann boundary conditions and \cite{LamWu} for the bulk-surface Allen-Cahn system with regular potentials.

\begin{proof}[\textbf{Proof of Lemma \ref{LS}}]
Let $U$ be a (sufficiently) small neighborhood of $\boldsymbol{0}$ in $\mathcal{W}_{K,0}^{2}$. For any $\boldsymbol{u}\in U$, define  $\boldsymbol{\phi}:=\boldsymbol{u}+\boldsymbol{\varphi}_{\infty}\in\widetilde{U}=\{\boldsymbol{\varphi}_{\infty}\}+U$. By the strict separation property of $\boldsymbol{\varphi}_{\infty}$, we conclude that $\boldsymbol{\phi}$ stays uniformly away from pure states $\pm1$, this means that there exists a constant $\delta_{3}\in(0,1)$ such that
\begin{align}
	\Vert \phi\Vert_{L^{\infty}(\Omega)}\leq1-\delta_{3},\quad\Vert \phi_{\Gamma}\Vert_{L^{\infty}(\Gamma)}\leq1-\delta_{3},\quad\forall\,\boldsymbol{\phi}
	:=(\phi,\phi_{\Gamma})\in\widetilde{U}.\notag
\end{align}
Define the energy functional $\mathcal{E}:\,U\rightarrow\mathbb{R}$ as
\begin{align*}
	\mathcal{E}\big(\boldsymbol{u}\big):=E(\boldsymbol{\phi}),\quad\forall\,\boldsymbol{u}=\boldsymbol{\phi}-\boldsymbol{\varphi}_{\infty}\in U.
\end{align*}
Denote the first and second Fr\'echet derivatives of $\mathcal{E}$ as $\mathcal{M}$ and $\mathcal{L}$, respectively. By an argument similar to that in \cite{CFP}, we can conclude that $\mathcal{M}\in C(U,(\mathcal{W}_{K,0}^{2})')$ and for any $\boldsymbol{w}\in\mathcal{W}_{K,0}^{2}$, we have
\begin{align*}
	\mathcal{M}\big(\boldsymbol{u}\big)\big(\boldsymbol{w}\big)&=\int_{\Omega}\Big(\nabla\phi\cdot\nabla w+\beta(\phi)w+\pi(\phi)w\Big)\,\mathrm{d}x+\chi(K)\int_{\Gamma}(\phi_{\Gamma}-\phi)(w_{\Gamma}-w)\,\mathrm{d}S\\
	&\quad+\int_{\Gamma}\Big(\nabla_{\Gamma}\phi_{\Gamma}\cdot\nabla_{\Gamma}w_{\Gamma}
	+\beta_{\Gamma}(\phi_{\Gamma})w_{\Gamma}+\pi_{\Gamma}(\phi_{\Gamma})w_{\Gamma}\Big)\,\mathrm{d}S.
\end{align*}
In particular, since $\boldsymbol{u}\in U\subset\mathcal{W}_{K,0}^{2}$, we have $\mathcal{M}:U\subset\mathcal{W}_{K,0}^{2}\rightarrow\mathcal{L}_{(0)}^{2}$ and
\begin{align*}
	\mathcal{M}\big(\boldsymbol{u}\big)=\left(
	\begin{array}{c}
		\mathbf{P}_{\Omega}\big(-\Delta\phi+\beta(\phi)+\pi(\phi)\big)\\
		\mathbf{P}_{\Gamma}\big(\partial_{\mathbf{n}}\phi-\Delta_{\Gamma}\phi_{\Gamma}+\beta_{\Gamma}(\phi_{\Gamma})+\pi_{\Gamma}(\phi_{\Gamma})\big)
	\end{array}\right).
\end{align*}
Furthermore, we conclude that $\mathcal{L}\in C(U,\mathcal{B}(\mathcal{W}_{K,0}^{2},(\mathcal{W}_{K,0}^{2})'))$ and
\begin{align*}
	\mathcal{L}\big(\boldsymbol{u}\big)\big(\boldsymbol{w}\big)=\left(
	\begin{array}{c}
		\mathbf{P}_{\Omega}\big(-\Delta w+\beta'(\phi)w+\pi'(\phi)w\big)\\
		\mathbf{P}_{\Gamma}\big(\partial_{\mathbf{n}}w-\Delta_{\Gamma}w_{\Gamma}
		+\beta_{\Gamma}'(\phi_{\Gamma})w_{\Gamma}+\pi_{\Gamma}'(\phi_{\Gamma})w_{\Gamma}\big)
	\end{array}\right)\in\mathcal{L}_{(0)}^{2}\subset(\mathcal{W}_{K,0}^{2})'
\end{align*}
for any $\boldsymbol{u}\in U$, $\boldsymbol{w}\in\mathcal{W}_{K,0}^{2}$. Hence, $\mathcal{M}\in C^{1}(U,\mathcal{L}_{(0)}^{2})$ and $\mathcal{E}\in C^{2}(U;\mathbb{R})$.
Since $\boldsymbol{\varphi}_{\infty}$ satisfies \eqref{strong}, we obtain
\begin{align*}
	\mathcal{M}\big(\boldsymbol{0}\big)=\left(
	\begin{array}{c}
		\mathbf{P}_{\Omega}\big(-\Delta\varphi_{\infty}+\beta(\varphi_{\infty})+\pi(\varphi_{\infty})\big)\\
		\mathbf{P}_{\Gamma}\big(\partial_{\mathbf{n}}\varphi_{\infty}-\Delta_{\Gamma}\psi_{\infty}+\beta_{\Gamma}(\psi_{\infty})+\pi_{\Gamma}(\psi_{\infty})\big)
	\end{array}\right)=\boldsymbol{0},
\end{align*}
which infers that $\boldsymbol{0}$ is a stationary point of $\mathcal{E}$. Hence, it remains to show that there exist constants $\varsigma^{\ast}\in(0,1/2)$ and $b^{\ast}>0$ such that
$$\|\mathcal{M}(\boldsymbol{u})\|_{\mathcal{L}_{(0)}^{2}} \geq|\mathcal{E}(\boldsymbol{u})-\mathcal{E}(\boldsymbol{0})|^{1-\varsigma^{\ast}}$$
for all $\boldsymbol{u}\in\mathcal{W}_{K,0}^{2}$ satisfying $\|\boldsymbol{u}\|_{\mathcal{W}_{K,0}^{2}}\leq b^{\ast}$.

Define the linear operator $L:\mathcal{W}_{K,0}^{2}\rightarrow\mathcal{L}_{(0)}^{2}$ as $L:=\mathcal{L}\big(\boldsymbol{0}\big)$, i.e.,
\begin{align*}
	L\big(\boldsymbol{w}\big)=\left(
	\begin{array}{c}
		\mathbf{P}_{\Omega}\big(-\Delta w+\beta'(\varphi_{\infty})w+\pi'(\varphi_{\infty})w\big)\\
		\mathbf{P}_{\Gamma}\big(\partial_{\mathbf{n}}w-\Delta_{\Gamma}w_{\Gamma}
		+\beta_{\Gamma}'(\psi_{\infty})w_{\Gamma}+\pi_{\Gamma}'(\psi_{\infty})w_{\Gamma}\big)
	\end{array}\right),\quad\forall\,\boldsymbol{w}\in\mathcal{W}_{K,0}^{2}.
\end{align*}
For any two $\boldsymbol{w}$, $\boldsymbol{z}\in\mathcal{W}_{K,0}^{2}$
\begin{align*}
 \big(L(\boldsymbol{w}),\boldsymbol{z}\big)_{\mathcal{L}^{2}}&=\int_{\Omega}\nabla w\cdot\nabla z+\beta'(\varphi_{\infty})wz+\pi'(\varphi_{\infty})wz\,\mathrm{d}x\\
	&\quad+\int_{\Gamma}\nabla_{\Gamma}w_{\Gamma}\cdot\nabla_{\Gamma}z_{\Gamma}
	+\beta_{\Gamma}'(\psi_{\infty})w_{\Gamma}z_{\Gamma}+\pi_{\Gamma}'(\psi_{\infty})w_{\Gamma}z_{\Gamma}\,\mathrm{d}S\\
	&\quad+\chi(K)\int_{\Gamma}(z_{\Gamma}-z)(w_{\Gamma}-w)\,\mathrm{d}S\\
	&= \big(\boldsymbol{w},L(\boldsymbol{z})\big)_{\mathcal{L}^{2}}
\end{align*}
and we see that $L$ is a self-adjoint operator, i.e., $L=L^{*}$, where $L^{*}$ is the adjoint operator of $L$. Associated with $L$, we define the bilinear form $B(\boldsymbol{w},\boldsymbol{z})$ on $\mathcal{H}_{K,0}^{1}$ as follows:
\begin{align*}
	B(\boldsymbol{w},\boldsymbol{z})&=\int_{\Omega}\nabla w\cdot\nabla z+\beta'(\varphi_{\infty})wz+\pi'(\varphi_{\infty})wz\,\mathrm{d}x\\
	&\quad+\int_{\Gamma}\nabla_{\Gamma}w_{\Gamma}\cdot\nabla_{\Gamma}z_{\Gamma}
	+\beta_{\Gamma}'(\psi_{\infty})w_{\Gamma}z_{\Gamma}+\pi_{\Gamma}'(\psi_{\infty})w_{\Gamma}z_{\Gamma}\,\mathrm{d}S\\
	&\quad+\chi(K)\int_{\Gamma}(z_{\Gamma}-z)(w_{\Gamma}-w)\,\mathrm{d}S,\quad\forall\,\boldsymbol{w},\boldsymbol{z}\in\mathcal{H}_{K,0}^{1}.
\end{align*}
By $\mathbf{(A1)}$, $\mathbf{(A3)}$ , we can easily obtain the bilinear form
\begin{align*}
	B_{\lambda}\big(\boldsymbol{w},\boldsymbol{z}\big)=\lambda\big(\boldsymbol{w},\boldsymbol{z}\big)_{\mathcal{L}^{2}}+B\big(\boldsymbol{w},\boldsymbol{z}\big)
\end{align*}
is continuous and coercive in $\mathcal{H}_{K,0}^{1}$ for sufficiently large $\lambda$. Hence, by the Lax-Milgram theorem, $\lambda I+L$ is invertible and $(\lambda I+L)^{-1}:\mathcal{L}^{2}_{(0)}\rightarrow\mathcal{L}^{2}_{(0)}$ is compact. Then, we can obtain the Fredholm alternative result for the bulk-surface elliptic problem
\begin{align}
	\left\{
	\begin{array}{ll}
		-\Delta w+\beta'(\varphi_{\infty})w+\pi'(\varphi_{\infty})w=f,&\text{in }\Omega,\\
		K\partial_{\mathbf{n}}w=w_{\Gamma}-w,&\text{on }\Gamma,\\
-\Delta_{\Gamma}w_{\Gamma}+\partial_{\mathbf{n}}w+\beta_{\Gamma}'(\psi_{\infty})w_{\Gamma}+\pi_{\Gamma}'(\psi_{\infty})w_{\Gamma}=f_{\Gamma},&\text{on }\Gamma,
	\end{array}\right.\notag
\end{align}
where $\boldsymbol{f}:=(f,f_{\Gamma})\in\mathcal{L}^{2}_{(0)}$. Consequently, we obtain Rg$\big(L\big)=(\text{ker}(L^{*}))^{\perp}=(\text{ker}(L))^{\perp}$ and dimension of ker$(L)$ is finite. Let $\big\{\boldsymbol{\phi}_{j}\big\}_{j=1}^{N}:=\big\{(\phi_{j},\phi_{\Gamma,j})\big\}_{j=1}^{N}$ be the normalized orthogonal basis of Ker$(L)$ in $\mathcal{L}^{2}_{(0)}$ and $\big\{\boldsymbol{\phi}_{j}\big\}_{j=1}^{N}\in\mathcal{W}_{K,0}^{2}$ by the elliptic regularity theory. Let $\mathbb{P}\in\mathcal{B}(\mathcal{W}_{K,0}^{2},\mathcal{W}_{K,0}^{2})$ be the projection from $\mathcal{W}_{K,0}^{2}$ to Ker$(L)$, defined by
\begin{align*}
	\mathbb{P}\boldsymbol{v}=\sum_{j=1}^{N}(\boldsymbol{v},\boldsymbol{\phi}_{j})_{\mathcal{L}^{2}}\boldsymbol{\phi}_{j},\quad\forall\,\boldsymbol{v}\in\mathcal{W}_{K,0}^{2}.
\end{align*}
Then, the adjoint $\mathbb{P}^{\ast}\in\mathcal{B}\big((\mathcal{W}_{K,0}^{2})',(\mathcal{W}_{K,0}^{2})'\big)$ leaves $\mathcal{L}^{2}_{(0)}$ invariant. Indeed, for any $\boldsymbol{w}\in\mathcal{L}^{2}_{(0)}$ and $\boldsymbol{v}\in\mathcal{W}_{K,0}^{2}$,
\begin{align*}
	\big\langle \mathbb{P}^{\ast}\boldsymbol{w},\boldsymbol{v}\big\rangle_{(\mathcal{W}_{K,0}^{2})',\mathcal{W}_{K,0}^{2}}
	&=\big\langle\boldsymbol{w},\mathbb{P}\boldsymbol{v}\big\rangle_{(\mathcal{W}_{K,0}^{2})',\mathcal{W}_{K,0}^{2}}\\
	&=\big(\boldsymbol{w},\mathbb{P}\boldsymbol{v}\big)_{\mathcal{L}^{2}}\\
	&=\big(\boldsymbol{w},\sum_{j=1}^{N}(\boldsymbol{v},\boldsymbol{\phi}_{j})_{\mathcal{L}^{2}}\boldsymbol{\phi}_{j}\big)_{\mathcal{L}^{2}}\\
	&=\big(\sum_{j=1}^{N}(\boldsymbol{w},\boldsymbol{\phi}_{j})_{\mathcal{L}^{2}}\boldsymbol{\phi}_{j},\boldsymbol{v}\big)_{\mathcal{L}^{2}}\\
	&=\big\langle\sum_{j=1}^{N}(\boldsymbol{w},\boldsymbol{\phi}_{j})_{\mathcal{L}^{2}}\boldsymbol{\phi}_{j},\boldsymbol{v}\big\rangle_{(\mathcal{W}_{K,0}^{2})',\mathcal{W}_{K,0}^{2}}.
\end{align*}
Hence, we obtain $ \mathbb{P}^{\ast}\boldsymbol{w}=\sum_{j=1}^{N}(\boldsymbol{w},\boldsymbol{\phi}_{j})_{\mathcal{L}^{2}}\boldsymbol{\phi}_{j}\in\mathcal{L}^{2}_{(0)}$.

Now, we prove that $\mathcal{M}$ is analytic in a neighborhood $U$ of $\boldsymbol{0}$ in $\mathcal{W}_{K,0}^{2}$. Since $\beta$, $\beta_{\Gamma}$ are real analytic on $(-1,1)$ and $\pi$, $\pi_{\Gamma}$ are real analytic on $\mathbb{R}$, we can conclude that the mappings
\begin{align*}
	u\in\big\{L^{\infty}(\Omega):\,|u+\varphi_{\infty}|\leq1-\delta_{3}\text{ a.e. in }\Omega\big\}&\mapsto\beta(u+\varphi_{\infty})\in L^{\infty}(\Omega),\\
	u_{\Gamma}\in\big\{L^{\infty}(\Gamma):\,|u_{\Gamma}+\psi_{\infty}|\leq1-\delta_{3}\text{ a.e. on }\Gamma\big\}&\mapsto\beta_{\Gamma}(u_{\Gamma}+\psi_{\infty})\in L^{\infty}(\Gamma),\\
	u\in\big\{L^{\infty}(\Omega):\,|u+\varphi_{\infty}|\leq1-\delta_{3}\text{ a.e. in }\Omega\big\}&\mapsto\pi(u+\varphi_{\infty})\in L^{\infty}(\Omega),\\
	u_{\Gamma}\in\big\{L^{\infty}(\Gamma):\,|u_{\Gamma}+\psi_{\infty}|\leq1-\delta_{3}\text{ a.e. on }\Gamma\big\}&\mapsto\pi_{\Gamma}(u_{\Gamma}+\psi_{\infty})\in L^{\infty}(\Gamma),
\end{align*}
are analytic (in the sense of \cite[Definition 2.4]{HJ}). Then, by the embedding $$U\hookrightarrow\big\{\boldsymbol{u}:=(u,u_{\Gamma}):\,	\Vert u+\varphi_{\infty}\Vert_{L^{\infty}(\Omega)}, \Vert u_{\Gamma}+\psi_{\infty}\Vert_{L^{\infty}(\Gamma)}\leq1-\delta_{3}\big\},$$
it follows that $\mathcal{M}:\,U\rightarrow\mathcal{L}_{(0)}^{2}$ is analytic.
Finally, for the function spaces "$X, Y, V, W$" in \cite{Chill}, we can apply \cite[Corollary 3.11]{Chill} with $X=V=\mathcal{W}_{K,0}^{2}$, $W=Y=\mathcal{L}_{(0)}^{2}$ to derive the extended \L ojasiewicz-Simon inequality \eqref{5.2}.
\end{proof}

\medskip

\noindent \textbf{Acknowledgments.}
H. Wu is a member of the Key Laboratory of Mathematics for Nonlinear Sciences (Fudan University), Ministry
of Education of China. The research of H. Wu was partially supported by NNSFC Grant No. 12071084
and the Shanghai Center for Mathematical Sciences at Fudan University.

%

\smallskip

\end{document}